\numberwithin{equation}{section}
\def\thm@space@setup{%
  \thm@preskip=12pt plus 4pt minus 6pt
  \thm@postskip=15pt
}
\newtheorem{theorem}{Theorem}[section]
\newtheorem*{theorem*}{Theorem}
\newtheorem{proposition}[theorem]{Proposition}
\newtheorem*{proposition*}{Proposition}
\newtheorem{lemma}[theorem]{Lemma}
\newtheorem*{lemma*}{Lemma}
\newtheorem{claim}[theorem]{Claim}
\newtheorem*{claim*}{Claim}
\newtheorem*{subclaim*}{Sub-claim}
\newtheorem{corollary}[theorem]{Corollary}
\newtheorem*{corollary*}{Corollary}
\theoremstyle{definition}
\newtheorem{definition}[theorem]{Definition}
\newtheorem{observation}[theorem]{Observation}
\newtheorem{remark}[theorem]{Remark}
\newtheorem{conjecture}[theorem]{Conjecture}
\newtheorem*{definition*}{Definition}
\newtheorem*{observation*}{Observation}
\newtheorem*{remark*}{Remark}
\newtheorem*{example*}{Example}
\newtheorem*{question*}{Question}
\newtheorem*{exercise*}{Exercise}
\newtheorem*{fact*}{Fact}
\newtheorem*{notation*}{Notation}
\newtheorem*{conjecture*}{Corollary}
\newcommand{\bbD}{\mathbb{D}}
\newcommand{\bbN}{\mathbb{N}}
\newcommand{\calB}{\mathcal{B}}
\newcommand{\calC}{\mathcal{C}}
\newcommand{\calD}{\mathcal{D}}
\newcommand{\calK}{\mathcal{K}}
\newcommand{\calL}{\mathcal{L}}
\newcommand{\calN}{\mathcal{N}}
\newcommand{\NN}{\mathcal{N}}
\newcommand{\calT}{\mathcal{T}}
\newcommand{\interior}[1]{\text{int}(#1)}
\newcommand{\homeo}{\cong}
\newcommand{\ii}{^{-1}}
\newcommand{\tild}[1]{\widetilde{#1}}
\newcommand{\ssm}{\smallsetminus}
\newcommand{\abs}[1]{\left| #1 \right|}
\newcommand{\bbl}[1]{\#_b(#1)}
\newcommand{\bdr}[1]{\#_I(#1)}
\newcommand{\bbb}[1]{\#(#1)}
\newcommand{\pos}{>0}
\newcommand{\np}{\le 0}
\newcommand{\negative}{< 0}
\title{Highly twisted plat diagrams}
\author{Nir Lazarovich}
\author{Yoav Moriah}
\author{Tali Pinsky}
\thanks{NL was supported by the Israel Science Foundation (grant no. 1562/19), and by the German-Israeli Foundation for Scientific Research and Development.}
\date{}
\subjclass[2010]{Primary 57M}
\keywords{knot diagrams, plats, twist regions, Euler Characteristic}
\address{Department of Mathematics\\
Technion\\
Haifa, 32000 Israel}
\email{lazarovich@technion.ac.il}
\address{Department of Mathematics\\
Technion\\
Haifa, 32000 Israel}
\email{ymoriah@technion.ac.il}
\address{Department of Mathematics\\
Technion\\
Haifa, 32000 Israel}
\email{talipi@technion.ac.il}
\begin{document}

\maketitle

\begin{abstract} We prove that the knots and links in the infinite set of $3$-highly 
twisted $2m$-plats,with $m \geq 2$, are all hyperbolic.  This should be compared with 
a result of Futer-Purcell for  $6$-highly twisted diagrams. While their proof uses 
geometric methods our  proof is achieved by  showing that the  complements of such 
knots or links are unannular and atoroidal. This is  done by using a new approach 
involving an Euler  characteristic argument. 

\end{abstract}

\section{introduction}\label{sec: introduction}

The prevailing feeling among low dimensional topologists is that ``most" links $\mathcal{L}$ in 
$S^3$ are hyperbolic. That means that the open manifold $S^3 \ssm \mathcal{L}$ can be endowed with
a complete hyperbolic metric of sectional curvature $-1$. Being hyperbolic is a property of the manifold 
with far reaching consequences. However, proving that a specific
link $\mathcal{L}$ is hyperbolic turns out to be not trivial. This is especially true if 
the link $\mathcal{L}$ is ``heavy duty", i.e., has a very large crossing number. See for example
\cite{minsky-moriah:surplus}.

The question of when can one decide if the complement of a  link in $S^3$ is a hyperbolic manifold 
from its projection diagram  has been of interest for a long time. Just to give three examples: 
The first result  in this direction is by Hatcher and Thurston who proved that  complements of 
$2$-bridge knots which have at least two twist regions (they are not torus knots or links) are 
hyperbolic, see \cite{Hatcher-Thurston}. The second is Menasco's result \cite{Menasco} that
a non-split prime alternating link which is not a torus link is hyperbolic.  Later  Futer 
and Purcell proved in  \cite{FuterPurcell}, among other results, that every link with a  
$6$-highly twisted irreducible diagram and  which has at least  two twist regions  is hyperbolic.  
Their result is  obtained by applying Marc 
Lackenby's $6$-surgery theorem, see \cite{lackenby2000word}, to the corresponding fully 
augmented links. Our main theorem is:

\begin{theorem}\label{thm: highly twisted plats are hyperbolic}
Let $L$ be a $3$-highly twisted $2m$-plat, $m \geq 2,$ with at least three twist regions, 
then $L$ is  hyperbolic.
\end{theorem}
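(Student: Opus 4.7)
The plan is to invoke Thurston's hyperbolization theorem, so it suffices to show that $S^3 \setminus L$ is irreducible, boundary-irreducible, anannular, atoroidal, and not a small Seifert fibered space. As the abstract suggests, the main effort is the anannular and atoroidal conclusions; the other properties should follow from the same type of analysis (an essential sphere or compressing disk would reduce to even simpler configurations than the torus/annulus case), and the Seifert fibered case can be excluded since having at least three twist regions prevents $L$ from being a torus link or a link with a Seifert fibered exterior. I would argue by contradiction: assume an essential sphere, disk, annulus, or torus $F$ exists in $S^3 \setminus L$ and derive a contradiction from an Euler characteristic count that exploits the $3$-highly twisted structure.

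First I would put $F$ in a generic position with respect to the plat decomposition. A $2m$-plat naturally decomposes $S^3$ along a bridge sphere and further decomposes the link complement using the twist boxes $T_i$ and the ``bridge regions'' outside them. By standard innermost disk / outermost arc surgeries along these surfaces, I can assume that $F$ meets each twist box $T_i$ and each bridge region in essential subsurfaces, in such a way that every component of $F\cap T_i$ is a disk or a saddle-type piece whose boundary pattern on $\partial T_i$ is constrained by the twist pattern. In a twist box with at least three half-twists, a standard curve-counting argument shows that any essential proper surface meeting $T_i$ in an essential way must meet $\partial T_i$ in sufficiently many arcs with sufficiently high complexity, for otherwise one could isotope it to decrease its intersection with $L$ or with $\partial T_i$.

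The main step is the Euler characteristic accounting. I would build a cell structure on $F$ whose vertices come from intersections of $F$ with edges of a natural spine of the complement, whose edges come from intersections of $F$ with the faces of that decomposition, and whose $2$-cells are the components of $F$ intersected with the $3$-cells of the decomposition. Each twist box with $\ge 3$ half-twists should contribute strictly negative Euler characteristic to $F$ (after subtracting the trivial contributions from neighborhoods of $L$), while each bridge region contributes non-positively. Because $L$ has at least three twist regions and each twist region contributes a definite amount of negative complexity, summing over all pieces should yield $\chi(F)<0$, contradicting $\chi(F)\in\{0,2\}$ for a torus, annulus, sphere, or disk.

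The main obstacle I anticipate is the local analysis inside each twist box: one needs a clean combinatorial lemma of the form ``if $F$ meets a $3$-twisted box $T_i$ in an essential subsurface that is not a collection of ``standard'' product disks, then the contribution to $\chi$ from $F\cap T_i$ is at most $-1$.'' The non-triviality is in ruling out configurations in which $F$ passes through the twist regions ``trivially'' (as horizontal or product pieces) and only collects its topology in the bridge regions, since such configurations would sidestep the high-twisting hypothesis. Here the three-twist-regions condition should kick in via a pigeonhole on how $F$ can interact with the bridge sphere, forcing the twist regions to be seen non-trivially by $F$ and giving the required negative contribution.
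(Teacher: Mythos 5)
Your overall frame (Thurston hyperbolization plus an Euler characteristic count over a decomposition adapted to the twist boxes) points in the right general direction, but the core quantitative claim on which your contradiction rests is not attainable, and this is exactly where the paper's real work lies. You propose to show that each $3$-twisted box met ``essentially'' contributes strictly negative Euler characteristic, so that summing gives $\chi(F)<0$, contradicting $\chi(F)\in\{0,2\}$. For spheres this kind of strict statement is fine (and the paper gets non-splitness this way), but for tori and annuli it is false: a boundary-parallel torus around a component of $L$ passes through every bubble of every twist box it follows, meets the twist boxes in non-product pieces, and still has total Euler characteristic zero. Correspondingly, the best one can prove locally is non-positivity of the redistributed contributions (the paper's Lemma \ref{lem: non-positive contribution of chi'}), and the case $\chi(S)=0$ forces every contribution to vanish rather than producing a contradiction. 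The substantive content of the paper is then a lengthy classification of the equality configurations (Lemma \ref{lem: classification of chi'=0} and Lemmas \ref{lem: no tilde}--\ref{lem: c in C4}), followed by a global argument that a torus all of whose intersection curves are of the surviving shapes is boundary parallel, and a separate, rather delicate elimination of annuli (Proposition \ref{pro: unannular}, including the construction of the disk $\Delta$ and the auxiliary tori $U,V$ in Sub-case 1.1). Your ``main obstacle'' paragraph anticipates only the easier half of this (forcing non-trivial intersection with twist boxes via pigeonhole on three twist regions); it does not address, and cannot be fixed by, any purely local lemma of the form you state, because the obstruction to the theorem is the existence of genuinely zero-contribution global configurations, not a failure of local negativity.

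Two further gaps: your decomposition is along the bridge sphere and twist boxes, whereas the equality analysis needs the finer Menasco-style bubble/projection-plane structure (curves on $P^\pm$, tautness conditions such as never passing twice through a bubble on the same side, and twist-reducedness of the diagram); it is not clear your coarser spine sees enough to run the classification. And the case $m=2$ needs separate treatment: the paper's diagrammatic lemmas assume $m\geq 3$ (e.g.\ Remark \ref{rem: Plats are twist reduced}), and for $4$-plats one appeals to Hatcher--Thurston on $2$-bridge links; your proposal does not distinguish this case at all.
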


Every link $\mathcal{L}$ in $S^3$ has a plat projection \cite{BuZi}. 
Assume that $\mathcal{L}$ has a $2m$-plat 
projection $D(\mathcal{L})=D_P(\mathcal{L})$ in some plane $P$, for some $m \geq 2$.
Note that every $2m$-plat projection defines a knot or link projection with 
$m\geq b(\mathcal{L})$ bridges, and every $m$-bridge knot or link has a 
$2m$-plat projection (see \cite[p.~24]{BuZi}).  Here,  $b(\mathcal{L})$ 
denotes the bridge number of $\mathcal{L}$  as defined in the next 
section. It follows from Theorem \ref{thm: highly twisted plats are hyperbolic} 
that not all links have a  $3$-highly twisted plat diagram. 
The subset of links that do is a ``large'' subset in a sense that can 
be made precise, see the discussion in \cite{lustig2012large}. In  
Theorem \ref{thm: highly twisted plats are hyperbolic}
we weaken the conditions imposed in \cite{FuterPurcell} on $\mathcal{L}$  
from $6$-highly twisted to $3$-highly twisted, at the price of requiring 
that the diagram of $\calL$ be a plat. It seems that the techniques 
developed here using the Euler characteristic, with some additional work,  might be 
adequate for more general diagrams. Thus, we would like to make the following conjecture:

\begin{conjecture}\label{con: just highlt twisted}
Let $L$ be a link in $S^3$ with a link diagram which is prime, twist-reduced, $3$-highly twisted and 
has least two twist regions,  then the link  $L$ is  hyperbolic.
\end{conjecture}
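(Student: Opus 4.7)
The plan is to extend the Euler characteristic machinery underlying Theorem \ref{thm: highly twisted plats are hyperbolic} to the setting of an arbitrary prime, twist-reduced, $3$-highly twisted diagram. By Thurston's hyperbolization theorem, hyperbolicity of $S^3 \setminus L$ will follow once we establish that the complement is irreducible, boundary-irreducible, atoroidal, anannular, and not Seifert fibered. Primeness of the diagram, together with the presence of at least two twist regions of length at least three, should rule out essential $2$-spheres and splittings directly, and should exclude the small list of Seifert-fibered link complements (torus links and their close relatives), which are diagrammatically incompatible with having two long twist regions. The real content, as in Theorem \ref{thm: highly twisted plats are hyperbolic}, is the absence of essential annuli and tori.

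\textbf{Main step.} Enclose each twist region $T_i$ in a standard twist ball $B_i$ whose intersection with $L$ is a $2$-braid with $k_i \geq 3$ crossings, and let $C = \overline{S^3 \setminus \bigcup_i B_i}$ be the complementary ``connection region''. Given a putative essential annulus or torus $S$ in $S^3 \setminus L$, isotope it into general position with respect to the disks $\partial B_i \setminus L$ and eliminate trivial intersection components by standard surgery. A local analysis of how a surface can meet a $k$-crossing $2$-braid, exactly as in the plat case, shows that every remaining component of $S \cap B_i$ meeting the braided strands has $\chi \leq 0$, with strict inequality under mild combinatorial assumptions forced by $k_i \geq 3$. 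In $C$, primality and twist-reducedness should likewise force every component of $S \cap C$ to have non-positive Euler characteristic, with strict inequality in at least one ball or connection piece that $S$ genuinely interacts with. Summing the local bounds yields $\chi(S) < 0$, contradicting the hypothesis that $S$ is an annulus or a torus.

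\textbf{Main obstacle.} The decisive difficulty, not present in the plat setting, is controlling $S \cap C$ without a horizontal level structure. In a $2m$-plat, $C$ decomposes as a stack of trivial tangles lying between consecutive twist rows, and surfaces in these pieces admit a normal form that drives the Euler characteristic count. In a general twist-reduced diagram, $C$ is an arbitrary tangle complement whose combinatorics are encoded by the \emph{twist graph} $\Gamma_L$, with one vertex per twist region $T_i$ and one edge per arc of $L \cap C$ joining two twist balls. Translating primality and twist-reducedness into restrictions on $\Gamma_L$ that rule out essential $S \cap C$ configurations, and then matching these restrictions against the local Euler characteristic gains at the balls $B_i$, is the heart of the problem. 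One plausible route is a double induction on the number of twist regions and the combinatorial complexity of $\Gamma_L$, using an outermost-disk argument to either reduce to a diagram with fewer twist regions or extract a direct contradiction from an essential surface; but executing such an induction uniformly across all prime, twist-reduced diagrams, without squandering the gain provided by the $3$-highly twisted hypothesis, is where the bulk of the additional work beyond Theorem \ref{thm: highly twisted plats are hyperbolic} would concentrate.
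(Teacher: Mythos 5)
This statement is Conjecture \ref{con: just highlt twisted}: the paper does not prove it, and explicitly presents it as open, remarking only that the Euler characteristic techniques ``with some additional work, might be adequate for more general diagrams.'' Your text is likewise not a proof but a research plan, and you say so yourself: the ``Main obstacle'' paragraph identifies the central difficulty (controlling $S\cap C$ in the absence of the plat's level structure) and then defers it to ``the bulk of the additional work.'' That deferred step is precisely the content of the conjecture, so nothing has been established. To turn this into a proof you would need, at a minimum, an analogue of the normal form of Lemma \ref{lem: normal form} and of the curve classification of Lemmas \ref{lem: classification of chi'=0}--\ref{lem: c in C4} valid for an arbitrary prime, twist-reduced template, and no such statement is supplied or sketched in enough detail to check.

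Two of your intermediate claims are also misaligned with how the argument actually works in the plat case, which matters if you intend to imitate it. First, you assert that primeness and two long twist regions rule out essential $2$-spheres ``directly''; in the paper even non-splitness is not direct --- it is Corollary \ref{cor: nonsplit}, obtained only after the full machinery of taut surfaces, the contribution $\chi_+$, and the redistribution $\chi'$ of Lemmas \ref{lem: summing chi' gives Euler char} and \ref{lem: non-positive contribution of chi'}. Second, your ``local analysis'' claim that every component of $S\cap B_i$ has $\chi\le 0$ with strict inequality misrepresents the bookkeeping: in the paper the curves in $\calC_{2,0}$ and $\calC_{1,2}$ have \emph{positive} contribution, and the whole point of Section 4 is a delicate redistribution to neighbouring curves via the arc families $\calK_{3,0},\calK_{4,0},\tild\calK,\calK_{2,1}$, after which one only gets $\chi'(c')\le 0$, never a uniform strict inequality (indeed $\chi(S)=0$ must be attainable for the boundary-parallel torus). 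The subsequent case analysis of which curves realize $\chi'=0$ leans heavily on the plat structure (corners, rows, the checkerboard coloring of the template), and it is exactly this part for which you offer no general-diagram substitute.
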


\vskip15pt

\section{preliminaries}\label{sec: preliminaries}

\subsection{Bubbles and twist regions.}\label{subsec: bubbles and twist regions}

\vskip10pt

Given a projection of a link $\mathcal{L}$ onto a plane $P$, surround each crossing in 
the projection diagram by a small $3$-ball $B$. Denote the collection of these $3$-balls
by $\mathcal{B}$. Then $\mathcal{L}$ is isotopic to a link $L$ that is embedded 
in $P \cup{\partial{\mathcal{B}}}$. For a  single crossing we refer to $\partial B$ 
as a \emph{bubble}. Note that $P$ divides each  bubble into two hemispheres denoted 
by $\partial B^+$ and $\partial B^-$. Denote the union of all the $\partial B^\pm$ 
by $\mathcal{B}^\pm$ respectively. Denote the two disjoint $2$-spheres 
$(P \ssm \mathcal{B}) \cup \mathcal{B}^\pm$ by $P^\pm$ respectively. Each of $P^\pm$ bounds 
a $3$-ball $H^\pm$ in $S^3\ssm L$.

A \emph{twist region} $T$ in $L$ is  a ``cube" $D\times [-\varepsilon,+\varepsilon]$ where $D$ 
is a maximal disk in $P$ so that  and $(T,T\cap L)$ is a  trivial integer 2-tangle. 
For example, in Figure~\ref{fig:plat}, a box labeled $a_{i,j}$ indicates a
\emph{twist region} with $a_{i,j}$ crossings, where $a_{i,j}$ can be
positive, negative, or zero. In the example in the figure, $a_{1,1}=a_{2,2}=-3$, 
and all other $a_{i,j}=-4$.  

\begin{figure}
  \import{figures/}{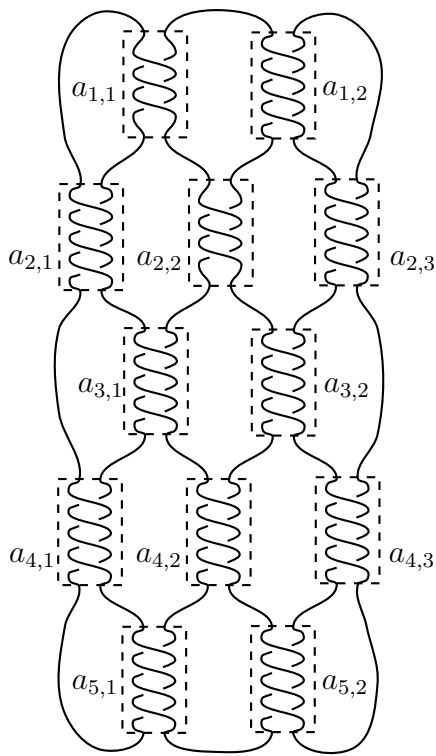}
\caption{A 6-plat projection of a $3$-bridge knot. }
\label{fig:plat}
\end{figure}

\subsection{Plats.}
Let $b$ be an element in the braid group $\mathcal{B}_{2m}$ on $2m-1$ generators 
$\{\sigma_1,\dots,\sigma_{2m-1}\}$. We require that $b$ is written 
as a concatenation of sub-words as follows:
$$b=b_1\cdot b_2 \cdot .... \cdot b_{n-1},$$

where $n$ is odd, and where $b_i$ has the following properties:

\begin{enumerate}      
\item When $i$ is odd, $b_i$ is a product of all $\sigma_j$ with $j$
  even. Namely:

$$b_i = \sigma_2^{a_{i, 1}} \cdot \sigma_4^{a_{i, 2}} \cdot \dots \cdot
\sigma_{2m-2}^{a_{i, m-1}}$$

\medskip

\item When $i$ is even, $b_i$ is a product of all $\sigma_j$ with $j$
  odd. Namely:

$$b_i = \sigma_1^{a_{i, 1}} \cdot \sigma_3^{a_{i, 2}} \cdot \dots \cdot
\sigma_{2m - 1}^{a_{i, m}}$$
\end{enumerate}

Consider the geometric braid on $2m$ strings corresponding to the element $b$. 
At the top of $b_1$, connect each pair of strands $\{1, 2\}, \{3, 4\}, \dots \{2m - 1, 2m\}$,
(ordered from left to right) by a small unknotted arc. Similarly connect the same pairs of strands 
at the bottom of $b_n$.

The obtained knot or link is a {\it $2m$-plat}.  The number  $m$  is the \emph{width} of the plat 
and $n$, is the \emph{length} of the plat. The braid $b$
will be called  the  {\it underlying braid} of the plat.

For any knot or link $\mathcal{L}\subset S^3$ there is an $m\in\bbN$ so that $\mathcal{L}$ 
has a $2m$-plat projection, on some projection plane $P$ as indicated in Figure~\ref{fig:plat}. 
This follows from the fact that any knot or link can be presented as a closed braid (proved by  
Alexander  in 1923  see \cite[p.~23]{BuZi})
and then strands of the braid  closure can be pulled across the braid diagram; for example see
\cite[p.~24]{BuZi}. The number $m$ is by no means unique. If $m = br(\mathcal{L})$, where
$br(\mathcal{L})$ is the bridge number of $\mathcal{L}$ (see Schubert \cite{Schubert}), 
then  the $2m$-plat is a \emph{minimal plat} for $\mathcal{L}$.

Given a knot or link in a $2m$-plat projection then the twist regions  corresponding to the 
$\sigma_{i, j}^{a_{i, j}}$ are called \emph{twist boxes}. The twist boxes are arranged in a  
configuration which is ``almost'' a matrix:  There are $n \in \mathbb{N}$ rows indexed by 
$i \in \{1, \dots, n\}$. For odd $i$'s there are $m-1$ columns and rows with an even $i$
have $m$ columns.  Denote the crossing number in each twist box  by $t_{i,j} = a_{i,j}$.

\begin{definition}
A $2m$-plat will be called \emph{$c$-highly twisted} if
$\abs{t_{i,j}}\geq c$ for some constant $c$, for all $i,j$.
Similarly, a knot or link that admits a $c$-highly twisted
plat projection will be called a \emph{$c$-highly twisted knot or link}. 
\end{definition}

\subsection{Diagram regions}\label{subsec: diagram regions} 
Every knot of link diagram $D(L)$ in a projection plane $P \subset S^3$ defines a planar $4$-regular 
graph. The complementary surfaces of $P \ssm D(L)$ can be colored black and white so that two 
surfaces adjacent to an edge are colored in different colors. 

A  $2m$-plat diagram $D(L)$ determines a template diagram $\mathcal T$ in $P$ where each 
twist region is replaced by a rectangle. Thinking of these rectangles as vertices of valency
$4$ in the graph determined by $\mathcal{T}$, one can color the complementary regions of 
the graph in a black and white checkerboard  manner. 

\begin{definition}[Lackenby \cite{lackenby2004volume}]\label{def: twist reduced}
A link diagram $D(L)$ is \emph{prime} if any simple closed curve in $P$ intersecting $D(L)$ 
in two points bounds a subdiagram with no crossings.

A link diagram  $D(L)$ is  \emph{twist-reduced} if any simple closed curve in $P$ which 
intersects the edges of $\mathcal T$ transversally in four points composed of two pairs 
each of which is adjacent to a crossing of $D(L)$, bounds a subdiagram which is the 
diagram of an integer $2$-tangle.
\end{definition}

\begin{remark}\label{rem: Plats are twist reduced} Let $L$ be a $3$-highly twisted $2m$-plat with $m \geq 3$.
Then one can observe directly that the corresponding diagram $D(L)$ is  prime and twist-reduced.
\end{remark}

\begin{remark}\label{rem: Intersect or bubble}
An arc in $P$ connecting two regions of different colors must cross an edge of the graph 
or go through a rectangle. Hence an arc on $P^\pm$ in the $2m$-plat diagram connecting
two regions of different colors must intersect $L$ or meet a bubble (by which we mean that 
the arc intersects $\partial B$  in an arc for some bubble $B$).
\end{remark}

\begin{definition}\label{def: Distance}
Given two regions in $ A, B \in P \ssm \mathcal{T}$ define the distance $d(A, B)$ between 
$A$ and $B$ to be the minimal number of color changes over all arcs between $A$ and $B$. 
Similarly, if $a,b$ are points in regions $A,B$ respectively, then define the distance 
$d(a,b)$ to be the distance $d(A,B)$.
\end{definition}

\begin{definition}\label{def: regions}
The regions in $P\ssm \mathcal{T}$  are composed of quadrilaterals, triangles, four bigons and
a single unbounded region. 
The bigon regions are called the \emph{corners} of the template $\mathcal T$.
\end{definition}

\begin{definition}
We say that an arc of $L$ connects two regions if its endpoints are contained in the closure of the 
two regions respectively (note that the same arc can connect different pairs of regions).
\end{definition} 

\begin{definition}\label{def:BridgeNumber}
Given a knot or link projection with the over-crossings and under-crossings indicated,
a \emph {bridge}\footnote{Note that the standard definition of a bridge requires the 
arc to be maximal and contain at least one over-crossing.} does not contain any under-crossings, 
i.e., it is a subarc of $L\cap P^+$. Note that in a plat, a bridge can pass over at most two crossings.
\end{definition}

\begin{observation}\label{obs: single arc}
If $L$ is 2-highly twisted, and  $\alpha$ a bridge which connects two regions 
of the diagram $D(L)$ then: 
\begin{enumerate}
    \item \label{obs: single arc. two bubbles}  If $\alpha$ passes over two crossings, 
    then $\alpha$ is the unique bridge connecting the same regions that 
    passes over two crossings. 
    \item \label{obs: single arc. two bubbles and distance 2}If $\alpha$ passes over 
    two crossings and the two regions that are connected by $\alpha$ are at distance 
    at least 2, then $\alpha$ is the unique bridge connecting them.
\end{enumerate}
Note that the only case of an arc $\alpha$ which passes over two crossings and connects regions 
of distance less than $2$ occurs on the boundaries of the corner bigon regions of the diagram. 
In this case, there is a second arc which does not pass over any crossings which connects the 
same regions.
\end{observation}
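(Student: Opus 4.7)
The plan rests on the rigid local structure of bridges that pass over two crossings in a highly twisted plat. Within any single twist box the over/under status of the strand alternates at consecutive crossings, so a bridge passing over two crossings must pass over one crossing from each of two distinct twist boxes $A_1$ and $A_2$. These two crossings are necessarily extreme (topmost or bottommost) within their boxes, and $A_1,A_2$ must be adjacent in the plat layout via a shared strand along which the bridge runs. Since the plat is $2$-highly twisted, each of $A_1,A_2$ has at least two crossings, so the endpoints of $\alpha$ lie at well-defined under-crossings one step interior to $A_1$ and $A_2$. In particular, $\alpha$ is determined by the pair $(A_1,A_2)$ together with which extreme crossings of $A_1,A_2$ are traversed.

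For part (1), I would fix such an $\alpha$ with endpoints in regions $R_1,R_2$ and argue that, away from the four corner bigons, the pair $(R_1,R_2)$ recovers the data $(A_1,A_2)$ and the two extreme crossings uniquely. Concretely, the two regions connected by a two-crossing bridge lie on the two sides of the shared strand between $A_1$ and $A_2$; inspecting the planar picture, and using that the diagram is prime and twist-reduced (Remark~\ref{rem: Plats are twist reduced}), one sees that these two regions force the adjacent pair of boxes to be $(A_1,A_2)$ and fix the extreme crossings. Hence any other two-crossing bridge $\beta$ connecting the same $R_1,R_2$ must coincide with $\alpha$. At a corner, however, the plat-closure arc furnishes an additional arc which connects the same pair of regions but passes over no crossings, accounting for the exception noted after the statement.

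For part (2), let $\beta$ be any bridge connecting $R_1,R_2$ with $d(R_1,R_2)\ge 2$. Push $\beta$ slightly off $L$ inside $P^+$ to obtain an arc $\beta'$ whose endpoints lie in the interiors of $R_1$ and $R_2$. By Remark~\ref{rem: Intersect or bubble}, $\beta'$ must undergo at least $d(R_1,R_2)\ge 2$ color changes, each occurring at an intersection with $L$ or at a bubble-meet. Since $\beta\subset L$, intersections of $\beta'$ with $L$ are confined to small neighborhoods of $\beta$'s endpoints and contribute a bounded, controllable amount; the remaining color changes correspond to crossings that $\beta$ passes over. Since in a plat a bridge passes over at most two crossings, $\beta$ must pass over exactly two, and part (1) then gives $\beta=\alpha$.

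The main obstacle I expect is the case analysis in part (1), verifying that outside the corner bigons the flanking pair of regions rigidly determines the two adjacent twist boxes, together with the careful accounting in part (2) of color changes near $\beta$'s endpoints that is needed to turn the distance hypothesis into the conclusion that $\beta$ passes over at least two crossings.
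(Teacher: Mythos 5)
The paper offers no written argument for this Observation (it is meant to be read off the plat diagram), so the entire burden of a proof is the uniqueness step -- and that is precisely the step you assert rather than prove. Your structural setup is fine: alternation inside a twist box shows the two over-crossings of $\alpha$ are extreme crossings of two twist boxes $A_1,A_2$ joined by a strand, and $2$-high twisting places the endpoints one crossing into each box. But the claim you hang uniqueness on -- that the two regions connected by $\alpha$ ``lie on the two sides of the shared strand'' and therefore recover $(A_1,A_2)$ -- is not right. The regions flanking the shared strand are at distance $1$ and are not the pair that matters in the paper's applications (Lemmas \ref{lem: kappa of K_{4,0}} and \ref{lem: kappa in K_{3,0}}); the relevant pair consists of the regions adjacent to the two under-crossing endpoints, one step inside $A_1$ and $A_2$, and that pair does \emph{not} determine $(A_1,A_2)$ at the level of the template. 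For instance, if $\alpha$ ends in a box $B$ next to a region $R$, the mirror candidate bridge leaving $B$ through its \emph{other} extreme crossing abuts the same $R$; what excludes the coexistence of the two candidates is only the over/under data -- they would require the two extreme crossings of $B$ to have opposite signs, impossible since all crossings of a twist region have the same sign. Your sketch never invokes this sign rigidity (nor uses primality or twist-reducedness in any concrete way), and there remain boundary configurations (bridges running over the plat-closure caps, and candidate pairs through disjoint boxes both abutting the unbounded region when the width is small) that must be checked separately; none of this is addressed, so the ``inspection'' sentence is exactly the content of part (1), not a proof of it.

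Part (2) has a second, independent gap: the reduction ``$d(R_1,R_2)\ge 2$ forces $\beta$ to pass over two crossings'' fails. A bridge passing over a single crossing in the middle of a twist box has its endpoints adjacent to the left and right regions of that box, which have the same color and are at distance $2$ in the sense the paper uses -- this is exactly the situation in Case 2 of the proof of Proposition \ref{pro: unannular}, where a one-crossing middle bridge and a two-crossing bridge connect the same pair of regions and Observation \ref{obs: single arc}(2) is invoked to exclude it. The push-off of such a $\beta$ achieves its two color changes by meeting bubbles of the twist box, not by over-crossings, so the bookkeeping ``remaining color changes $=$ crossings passed over'' is invalid, and the phrase ``bounded, controllable amount'' near the endpoints is precisely where the real argument has to live. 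A correct proof of (2) must rule out zero- and one-crossing competitors directly (this is also where the corner exception involving the closure arc enters), rather than reduce immediately to part (1).
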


\vskip20pt

\section{Surfaces in link complements.}
\vskip10pt
\subsection{Normal position.}\label{subsec: normal position} 
We are interested in studying compact surfaces $S$ properly embedded in 
$S^3 \ssm \NN(L)$. If $\partial S \neq \emptyset$ we extend $S$ by shrinking the neighborhood 
$\NN(L)$ radially. This determines  a map $i:S\to S^3$, whose image we denote by $S$ as well, 
which is an embedding on  $S\ssm\partial S$ and $i(\partial S)\subseteq L$.

\begin{lemma}\label{lem: normal form} 
Let $S\subset S^3 \ssm \NN(L)$ be a proper surface with no meridional boundary components, and 
let $(T,t)$ be a twist region. Then,  up to isotopy, and $S \cap T$ is a disjoint  union of disks  
$D \subset (T,t)$ of one of the 
following three types:
\begin{enumerate}
    \item[ \underline{Type 0}:] $D$ separates the two strings of $t$.
    \vskip7pt
    \item[ \underline{Type 1}:] $\partial D$ decomposes as the union of two arcs $\alpha \cup \beta$ 
    such that $\alpha\subset t$ and $\beta\subset \partial T$.
    \vskip7pt
    \item[ \underline{Type 2}:] $\partial D$ decomposes as the union of four arcs 
    $\alpha_1 \cup \beta_1 \cup \alpha_2 \cup \beta_2$ where $\alpha_i \subset t_i$ and 
    $\beta_i \subset \partial T$.
\end{enumerate}
Moreover, the isotopy decreases the number of bubbles that $S$ meets, and if no component of 
$\partial S$ is a meridian, then we may further assume that $i|_{\partial L}:\partial S \to L$ 
is a covering map. 
\end{lemma}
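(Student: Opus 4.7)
The plan is to put $S$ into a minimal position with respect to $T$ and its bubbles, and then classify what components of $S\cap T$ can survive. Isotope $S$ to be transverse to $\partial T$ and to each bubble boundary $\partial B\subset T$, and among all such configurations select one lexicographically minimizing the complexity $(b,c)$, where $b$ is the number of bubbles of $T$ met by $S$ and $c$ is the number of components of $S\cap\partial T$. Viewing $\Sigma:=\partial T\ssm t$ as a four-punctured sphere, $S\cap\partial T$ consists of circles and of arcs with endpoints on the four punctures. A standard innermost-disk argument on $\Sigma$ eliminates every inessential closed curve: an innermost such circle bounds a disk on $\Sigma$ disjoint from $t$, and this disk lets us isotope $S$ across $\partial T$, reducing $c$ without increasing $b$. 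An outermost-arc argument similarly eliminates inessential arcs. Hence, at the minimum, every component of $S\cap\partial T$ is essential on $\Sigma$.

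Next, I classify each component $F$ of $S\cap T$. Such an $F$ is a compact surface properly embedded in $T\ssm\mathring\NN(t)$, with $\partial F\cap\partial T$ a union of essential curves on $\Sigma$. Since $t$ is a trivial $2$-tangle, $T\ssm\mathring\NN(t)$ is a genus-two handlebody with a canonical product-like structure. First, I rule out closed components of $S\cap T$: a closed $F$ bounds a submanifold of $T$, and the unknottedness of $t$ together with minimality of $(b,c)$ forces such an $F$ to admit a compressing disk yielding an isotopy that strictly decreases $(b,c)$. Next, I enumerate: essential simple closed curves on $\Sigma$ fall into three classes partitioning the four punctures into pairs, but only the class separating the two strands of $t$ actually bounds a disk in $T\ssm t$, giving a Type $0$ disk; essential arcs on $\Sigma$ are classified by the pairs of punctures they join, and they must be matched by arcs of $\partial F$ lying on $t$ to form the closed curve $\partial F$. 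The trivial-tangle structure constrains the possible matchings so severely that the only connected surfaces in $T\ssm\mathring\NN(t)$ with such boundary are disks of Type $1$ (one arc on $t$, one on $\partial T$) or Type $2$ (two alternating pairs). Any surviving non-disk $F$ admits a compressing disk in $T\ssm\mathring\NN(t)$, and compressing along it yields an isotopy reducing $(b,c)$, contradicting minimality.

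The ``moreover'' clauses are essentially built into the construction. By design, every isotopy above either strictly reduces $b$ or preserves $b$ while strictly reducing $c$, so the number of bubbles met by $S$ is non-increasing throughout. For the covering-map claim, if no component of $\partial S\subset\partial\NN(L)$ is a meridian, then each component of $\partial S$ is a simple closed curve on a torus component of $\partial\NN(L)$ with nonzero longitudinal winding; after a final isotopy on $\partial\NN(L)$ each such component projects uniformly onto its component of $L$ under the radial shrinking, making $i|_{\partial S}:\partial S\to L$ a covering map. The main obstacle is the classification step: without any incompressibility hypothesis on $S$, I must extract disks from potentially non-disk components using only minimality of $(b,c)$ and the rigidity of the trivial tangle $(T,t)$. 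The delicate point is ensuring that every compressing or boundary-compressing disk produced in the argument actually realizes a genuine isotopy of $S$ in $S^3\ssm\NN(L)$, rather than a compression changing the topology of $S$; this requires exhibiting the disk as one face of a product region inside $T\ssm\mathring\NN(t)$, which is where the trivial-tangle structure plays its essential role.
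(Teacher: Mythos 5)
Your plan stands or falls on the classification step, and that is exactly where it has a genuine gap. You assert that any closed or non-disk component of $S\cap T$ ``admits a compressing disk \dots and compressing along it yields an isotopy reducing $(b,c)$, contradicting minimality.'' But a compression is not an isotopy, and the lemma carries no incompressibility hypothesis on $S$; you flag this yourself as the ``delicate point'' (exhibiting each compressing disk as a face of a product region) and then never carry it out, although that is essentially the whole content of the lemma. Worse, the complexity $(b,c)$ cannot force the issue even in principle: a component of $S\cap T$ that avoids the bubbles and $\partial T$ --- for instance a torus component of $S$ encircling both strands at a level between two crossings, or a knotted tube whose compressing disk lies in the interior of $T\ssm \NN(t)$ away from the bubbles --- contributes nothing to $b$ or $c$, and an interior compression of it changes neither number, so no contradiction with minimality ever arises; yet such a component is not of Type 0, 1 or 2, and eliminating it requires an explicit isotopy sliding it out of $T$ along the strands, which your argument never produces. (A smaller inaccuracy: essential simple closed curves on the four-punctured sphere $\partial T\ssm t$ do not fall into just three classes; each partition of the punctures supports infinitely many slopes, so the enumeration of possible boundary patterns is also incomplete.)

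The paper avoids all of this by constructing the normalizing isotopy directly rather than arguing by contradiction from a minimal position. It identifies $T\ssm\NN(t)$ with $P\times[0,1]$, $P$ a twice-holed disk, takes the vertical disk $E=\alpha\times[0,1]$ (which carries all the bubbles), and first pushes $S$ off $T$ so that $S\cap E$ consists only of arcs running from $\alpha\times\{0\}$ to $\alpha\times\{1\}$; it then normalizes $S\cap(P\times\{\tfrac12\})$ into three standard arc types, so that $S$ meets a thin slab $P\times[\tfrac12-\varepsilon,\tfrac12+\varepsilon]$ in product disks of Types 0, 1, 2, and finally stretches the slab to all of $T$. That construction needs neither incompressibility nor a complexity count, and it is what disposes of the stray closed and non-disk pieces that your $(b,c)$-minimality cannot see. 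To repair your proof you would have to add an argument of this product-structure type, at which point you have essentially reproduced the paper's proof.
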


\begin{proof}
If no component of $\partial S$ is a meridian, we may assume that up to isotopy 
$i|_{\partial L}:\partial S \to L$ is a covering map. 

The twist region $(T,t)$ is a trivial 2-tangle. The complement $T\ssm \NN(t)$ can be 
identified with $P\times [0,1]$ where $P$ is a twice holed disk.  Let $E$ be the disk
$\alpha\times [0,1]$ where $\alpha$ is the simple arc connecting the two holes of $P$.
Up to a small isotopy, we may assume that $S$ intersects $E$ transversely. 
Since the bubbles in $T$ are in some neighborhood of $E$, we may assume that $S$ meets 
a bubble if it does so in $E$. The intersection $S\cap E$ comprises of simple closed 
curves and arcs. All curves and arcs except those connecting $\alpha\times \{0\}$ to 
$\alpha\times \{1\}$ can be eliminated by an isotopy pushing $S$ off $T$. 
This isotopy decreases the number of bubbles $S$ meets. The number of bubbles the 
resulting surface meets equals the number of such arcs times the number of twist 
in the twist box.

Up to isotopy, we may also assume that $S$ intersects $P\times \{ \tfrac{1}{2}\}$ transversely. 
Hence, $S\cap (P\times \{ \tfrac{1}{2}\})$ is a collection of simple closed curves and arcs. 
By pushing $S$ outwards towards the boundary of the disk $P$, one can assume that each 
component of $S\cap (P\times \{ \tfrac{1}{2}\})$ is of the following form:
\begin{enumerate} \setcounter{enumi}{-1}
    \item An arc connecting the boundary of the disk $P$ to itself separating the holes, 
    and intersecting $\alpha$ once.
    \item An arc connecting a hole to the boundary of the disk and not intersecting $\alpha$.
    \item An arc connecting the two holes and not intersecting $\alpha$.
\end{enumerate}
Thus, $S\cap (P\times [\tfrac{1}{2}-\varepsilon,\tfrac{1}{2}+\varepsilon])$ is a collection 
of disks of types (0),(1) or (2) as stated. By an ambient isotopy, we can stretch the slab 
$P\times [\tfrac{1}{2}-\varepsilon,\tfrac{1}{2}+\varepsilon]$ to $P\times [0,1]=T$.
The number of bubbles the resulting surface meets equals the number of arcs of type (0) times 
the number of twist in the twist box. The arcs of type (0) are in one-to-one correspondence 
with the arcs of $S\cap E$. 

The fact that $i:\partial S \to L$ is a covering map was not affected by the isotopies above.
\end{proof}

\begin{definition}\label{def: normal position}
A surface $S\subset S^3 \ssm \NN(L)$ is \emph{in normal position} if its extension intersects 
each twist region  as specified in Lemma \ref{lem: normal form} and $i:\partial S \to L$ is 
a covering map. In particular, $S$ has no meridional boundary components.
\end{definition}

\begin{corollary}
Let $S \subset S^3 \ssm \calN (L)$ be a proper  incompressible surface, and let $(T,t)$ be a 
twist region. Then, up to isotopy, each component of the intersection $S \cap T \cap P^\pm$ 
looks as in Figure \ref{fig: three types of intersection}.
\end{corollary}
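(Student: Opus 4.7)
The plan is to invoke Lemma \ref{lem: normal form} to put $S$ in normal position, so that $S\cap T$ is a disjoint union of disks of types $0$, $1$, and $2$. It then suffices to describe, for each type of disk $D\subset S\cap T$, the intersection $D\cap P^\pm$, since $S\cap T\cap P^\pm$ is the disjoint union of these contributions over all such disks $D$.

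Recall from the proof of Lemma \ref{lem: normal form} that, up to isotopy inside $T$, each disk $D$ is obtained by stretching a slab of the form $D'\times[\tfrac{1}{2}-\varepsilon,\tfrac{1}{2}+\varepsilon]$ under the identification $T\ssm\NN(t)\cong P\times[0,1]$, where $D'\subset P\times\{\tfrac{1}{2}\}$ is an arc of type (0), (1), or (2) as listed there; and that all the bubbles of $T$ lie in a neighborhood of the disk $E=\alpha\times[0,1]$, with $\alpha\subset P$ the arc separating the two holes of $P$. This product description makes $D\cap P^\pm$ computable: it is the union of the arcs in which $D$ meets $P$ outside the bubbles, together with the arcs in which $D$ meets each hemisphere $\partial B^\pm$.

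I would then carry out the case analysis type by type. For Type $0$ the arc $D'$ crosses $\alpha$ once, so $D$ meets $E$ in one transverse vertical arc and hence hits every bubble of $T$; consequently $D\cap P^\pm$ is a single arc that runs across $T$ end-to-end, rising over (respectively, dipping under) each bubble hemisphere it encounters. For Type $1$ the arc $D'$ joins one hole of $P$ to $\partial P$ while avoiding $\alpha$, so $D$ is disjoint from $E$ away from its string boundary, and $D\cap P^\pm$ is an arc from a bubble hemisphere at the appropriate end of the twist region out to $\partial T$. For Type $2$ the arc $D'$ joins the two holes while avoiding $\alpha$, and $D\cap P^\pm$ is an arc joining two bubble hemispheres at the extreme ends of the two strings on the same side. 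These three patterns are precisely the three curves depicted in Figure \ref{fig: three types of intersection}.

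The main obstacle is bookkeeping: one must verify that the stretching isotopy from Lemma \ref{lem: normal form} can be chosen to preserve the bubble incidences of $D$ faithfully, so that the tracing above is accurate. Here the incompressibility hypothesis enters to rule out spurious closed curves or inessential arcs in $D\cap P^\pm$ that could in principle appear before normalization, and to guarantee that the covering map condition $i\colon\partial S\to L$ from Definition \ref{def: normal position} is consistent with the product decomposition used.
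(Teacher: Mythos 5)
Your proposal is correct and follows essentially the route the paper intends: the paper gives no separate argument for this corollary, treating it as an immediate consequence of Lemma \ref{lem: normal form} by tracing how each disk of Type 0, 1, or 2 meets the hemispheres $P^\pm$, which is exactly the case analysis you carry out. Your extra bookkeeping about the product structure and the stretching isotopy is harmless elaboration of the same idea, so no gap to report.
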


\begin{figure}
   \centering
    \begin{overpic}[height=4cm]{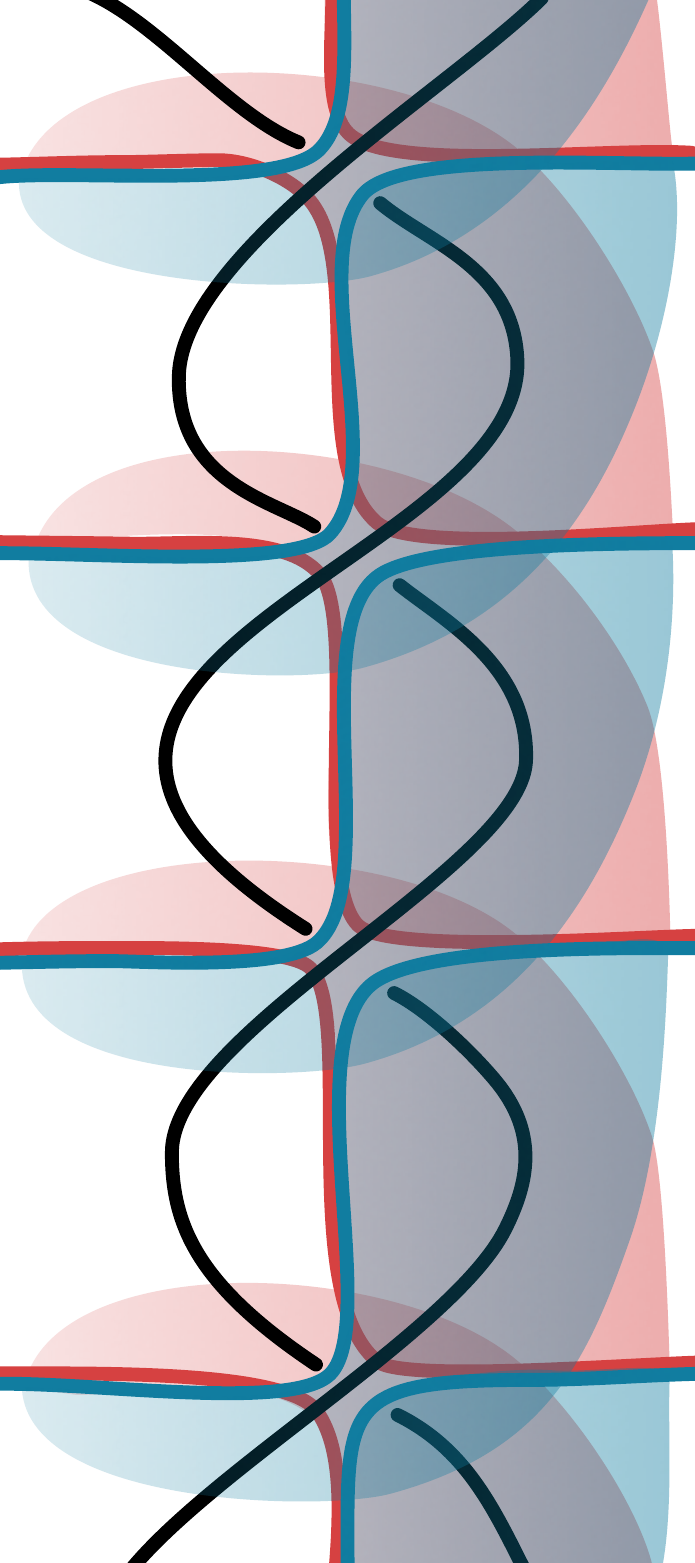}
    \put(5,-10){Type 0}
    \end{overpic}
        \hskip 1cm
    \begin{overpic}[height=4cm]{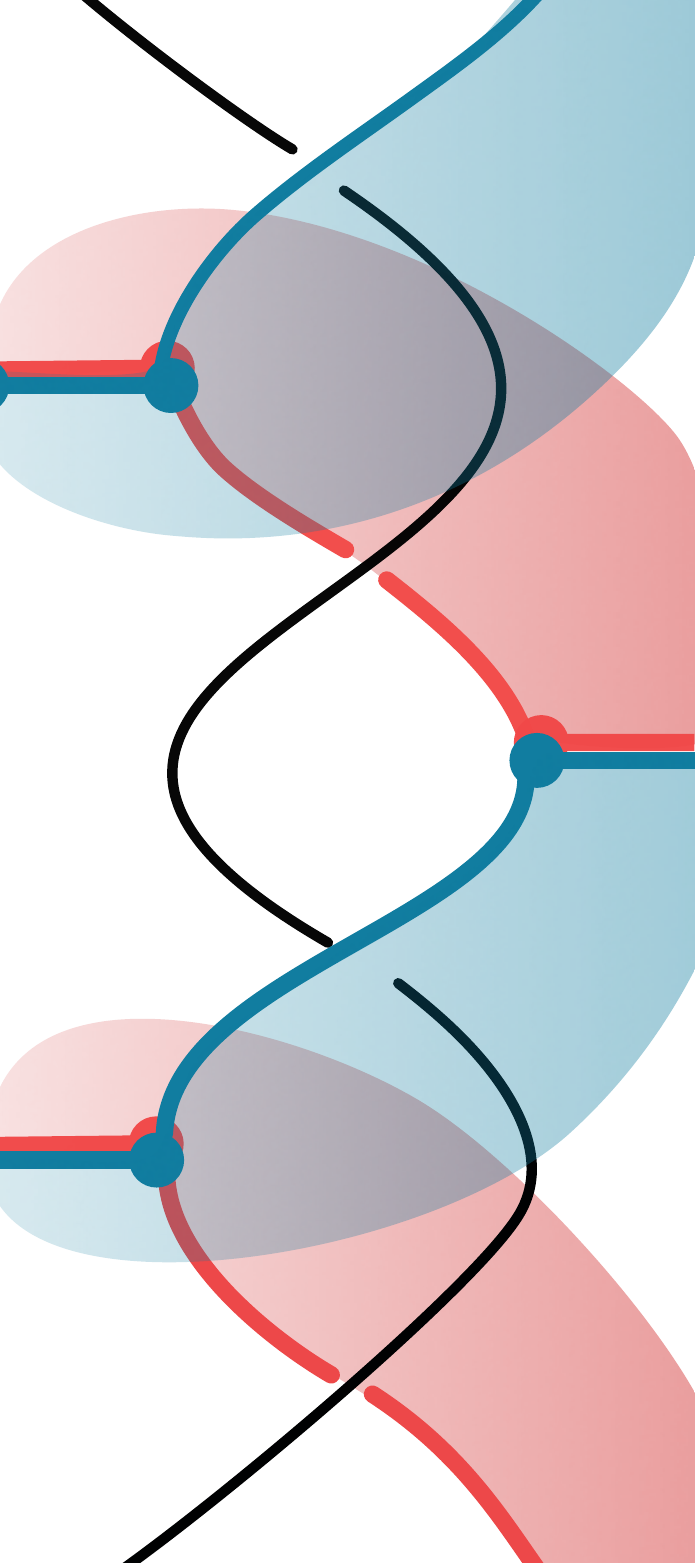}
    \put(5,-10){Type 1}
    \end{overpic}
        \hskip 1cm
    \begin{overpic}[height=4cm]{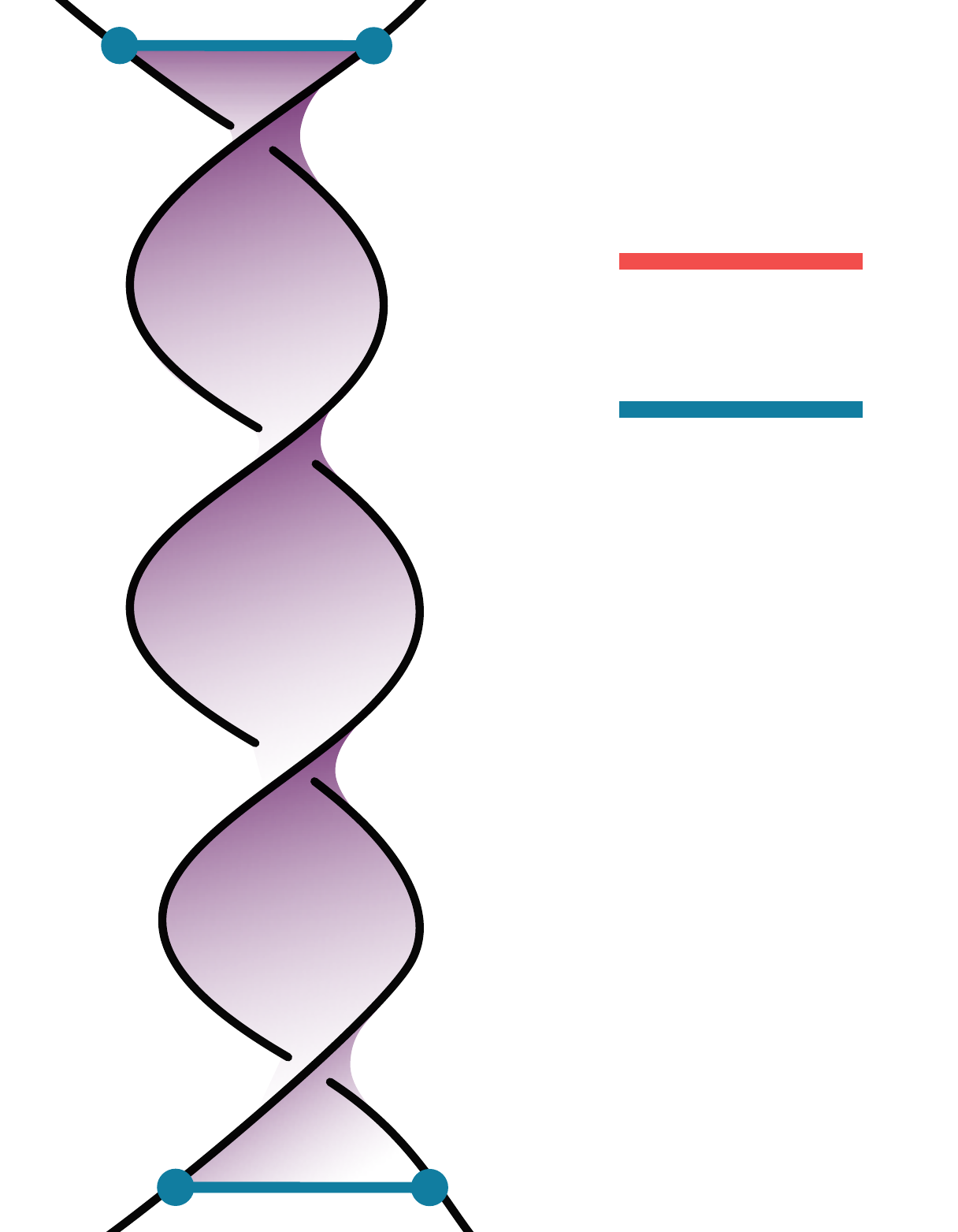}
    \put(5,-10){Type 2}
    \put(75,76){$S\cap P^-$}
    \put(75,63){$S\cap P^+$}
    \end{overpic}
    \medskip
    \caption{The possible three types of intersection of $S$ with a twist box.}
    \label{fig: three types of intersection}
\end{figure}

\subsection{Curves of intersection.}\label{subsec: Curves of intersection}
Let $S \subseteq S^3 \ssm \NN(L)$ be a surface in normal position. 
We would like to study the surface $S$ through its curves of intersection with the planes $P^\pm$. 
However, disks of Type (2) cause some technical complications. In order to simplify the situation 
we consider the surface $\widehat{S}$ obtained by removing those disks from $S$. 
Explicitly: Let $\calT$ be the union of all twist boxes of the plat $L$. 
Consider the collection of disks $\calD$ of Type (2) which occur as 
intersections $S\cap \calT$. We may assume that $\partial \calD \subset P \cup L$, and that the 
subsurface $\widehat{S}=S \ssm \calD$ is transversal to $P^\pm$.

Define $\calC^+ = \partial i\ii(\widehat{S} \cap H^+)$ and $\calC^- =  \partial  i\ii(\widehat{S} \cap H^-)$.
Now define $\calC =\calC^+ \cup \calC^-$.  As each of  $P^\pm$ is a $2$-sphere, $\widehat{S}\cap H^\pm$ is a 
collection of subsurfaces of $\widehat{S}$, the boundary of which are simple closed curves $c \subset S$. 
For $c\in \calC^+$, denote by $S_c$ the component of $\widehat{S} \cap H^+$ so that $c\subset \partial S_c$, 
and respectively for $c\in \calC^-$. Although formally defined as the preimage under $i$, we think of curves 
in $\calC^\pm$ as curves on $P^\pm$, as such they are disjoint outside $L$.

\begin{definition}\label{def: c passes}\label{def: numbers}
For $c\in \calC$,
\begin{enumerate}
    \item We say that a curve $c \in \calC$ \emph{passes through a bubble} $B$ if 
$c\cap (\partial B \ssm L)  \neq \emptyset$. 
    \item An \emph{intersection point} on $c\in \calC$ is an endpoint of the intervals in $c\ssm L$. 
    \item Denote by $\bbl{c}$ the number of bubbles (with multiplicities) through which $c$ passes.
    \item Denote by $\bdr{c}$ the number of intersection points along $c$.
    \item Define $\bbb{c}=\bbl{c}+\bdr{c}$.
    \item Define $\calC_{i,j} = \{ c\in\calC \mid \bbl{c}=i,\bdr{c}=j\}$.
\end{enumerate}
\end{definition}

\subsection{Taut surfaces}\label{subsec: taut surfaces}
\begin{definition}\label{def: complexity} Given an incompressible surface $S \subset S^3 \ssm \NN(L)$
we define a { \it lexicographic complexity} of $S$ as follows: 
\begin{equation}
\text{Com}(S) = (\abs{\mathcal{C}},\, \sum_{c\in\calC} \bbl{c},\, 
\sum_{c\in\calC} \bdr{c})
\end{equation}
\end{definition}

Recall that a  properly embedded surface $S$ in a $3$-manifold $M$ is called \emph{essential} if 
it is either  a 2-sphere which does not bound a 3-ball, or it is incompressible, boundary incompressible 
and not boundary parallel.

\begin{lemma}\label{lem: properties of curves}
Let $S\subset S^3 \ssm \NN(L)$ be an essential surface in normal position. Assume that either
\begin{enumerate}[label=(\roman*)]
    \item $S$ is an essential 2-sphere, and minimizes complexity among all essential 2-spheres, or
    \item $S$ is not a 2-sphere, the link $L$ is not split (i.e., $S^3 \ssm L$ is irreducible), and 
    $S$ minimizes complexity in its isotopy class.
\end{enumerate}
Then, for all $c \in \calC$ we have:
\vskip5pt
\begin{enumerate}
    \item $S_c \homeo \bbD^2$.
    \vskip5pt
    \item $\bdr{c}$ is even.
    \vskip5pt
    \item If $\bdr{c}\le 2$ then $\bbl{c} >0$.
    \vskip5pt
   \item  If $\bdr{c}=0$ then $\bbl{c}$ is even.
   \vskip5pt
   \item The curve $c$ does not pass twice through the same bubble on the same side of $L$.
   \vskip5pt
   \item The curve $c$ does not contain an arc, as depicted in Figure \ref{reducing bubbles a}, 
   that passes through exactly one bubble and has two intersection points with an edge of 
   $L$ that emanates from the  bubble on both sides of the edge.
   \vskip5pt
   \item The curve $c$ does not contain an arc, as depicted in Figure \ref{reducing bubbles b}, 
   that is contained in a twist box, and passes through exactly one bubble and one intersection point.
      \vskip5pt
  \item There is no arc of $c\cap L$ such that a small extension of the arc along $c$ has endpoints 
  in the same region.
\end{enumerate}
\end{lemma}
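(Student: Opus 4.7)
The plan is to prove each of the eight items by a common ``reduction'' strategy: assume the stated conclusion fails, and exhibit a small ambient isotopy of $S$ that strictly decreases the lexicographic complexity $\mathrm{Com}(S)=(\abs{\calC},\sum_{c}\bbl{c},\sum_{c}\bdr{c})$, contradicting the choice of $S$. The essentiality of $S$ supplies the key tools: incompressibility lets us cap curves on $S$ by disks, boundary incompressibility handles configurations with meridional sub-arcs, and irreducibility of $S^3\ssm L$ in case (ii) (respectively, minimality among essential 2-spheres in case (i)) upgrades embedded 2-spheres to 3-balls along which to perform the isotopies. The geometric workhorse throughout is that $H^\pm$ are 3-balls with $L\subset\partial H^\pm$, so we have ample disks available in their interiors.

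I would establish (1) first. Each component of $\widehat{S}\cap H^\pm$ is incompressible in the ball $H^\pm$, since a compressing disk there would compress $S$; an innermost argument on $P^\pm$ then forces each such component to be a disk with one boundary component. Once (1) is proven, (2) is immediate because the intervals of $c\ssm L$ have two endpoints each. Item (4) follows from a checkerboard coloring of the template $\calT$: on a bubble $B$ the over-arc $L\cap \partial B^+$ connects two opposite quadrants, so any arc on $\partial B^+\ssm L$ connects two adjacent, oppositely-colored quadrants. Hence each bubble passage of $c$ toggles the color, and if $\bdr{c}=0$ then closing up $c$ requires $\bbl{c}$ to be even.

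The main obstacle is item (3). For $\bdr{c}=0$, $c\subset P\ssm(L\cup\mathcal{B})$ and $S_c$ is a disk in $H^\pm$ whose boundary is a curve on $P^\pm$ disjoint from $L$; the 3-ball bounded by $S_c$ and either of the disks on $P^\pm$ cobounded by $c$ furnishes an isotopy deleting $c$ from $\calC$, reducing $\abs{\calC}$. For $\bdr{c}=2$, $c$ lies on $P\ssm \mathcal B$ and meets $L$ transversely in two points, and primeness of the diagram (Remark~\ref{rem: Plats are twist reduced}) guarantees that one of the two complementary disks on $P$ contains no crossings and hence at most one arc of $L$ connecting the two intersection points. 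Using this clean side, $S_c$ yields either a meridional compression disk for $S$ or a boundary compression disk, and in either case we derive a contradiction: the first violates the normal-position assumption that $S$ has no meridional boundary components, while the second, by boundary incompressibility, forces a reduction of $\mathrm{Com}(S)$. Organizing the mixed boundary of $S_c$ (two arcs on $P\ssm\NN(L)$ and two meridional arcs on $\partial\NN(L)$) and extracting the correct compression disk is the technically delicate step.

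Items (5)--(8) are local innermost-disk reductions. For (5), two disjoint passages of $c$ through $\partial B^+$ at the same bubble $B$ are parallel arcs (since $c$ is simple and $\partial B^+\ssm L$ is a pair of disks), and an innermost one cobounds, with a sub-arc of $\partial B\cap P$, a disk on $\partial B^+$ disjoint from $c$; pushing $c$ across this disk removes the passage, reducing $\sum\bbl{c}$. Items (6) and (7) apply the same principle to the configurations depicted in the figures, using an obvious bigon on $P^\pm$ along the edge of $L$ exiting the bubble (in (6)) or inside the twist box (in (7)) to simultaneously eliminate the bubble passage and the indicated intersection points. For (8), an arc of $c$ lying on $L$ whose small $c$-extensions re-enter the same region $R$ cobounds a bigon in $R$ with a chord, and pushing $c$ off $L$ across this bigon reduces $\sum\bdr{c}$. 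In each of (5)--(8) the supporting disk is confined to a single bubble or template region, so by item (1) and the normal-position assumption the move genuinely reduces $\mathrm{Com}(S)$, contradicting minimality.
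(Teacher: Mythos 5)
Your overall strategy --- itemwise reductions against the lexicographic complexity $\mathrm{Com}(S)$, with essentiality supplying the compressing and boundary-compressing disks --- is exactly the paper's, and your treatments of (1), (2), (4), (6) and (8) match it. The genuine problem is in item (3), in the case $\bdr{c}=2$, $\bbl{c}=0$. You describe $c$ as a curve in $P\ssm\calB$ meeting $L$ transversally in two points, with $\partial S_c$ containing ``two meridional arcs on $\partial\NN(L)$''. That is not the structure of the curves in $\calC$: since $S$ is in normal position, $i|_{\partial S}\colon \partial S\to L$ is a covering map, so $c$ contains honest subarcs lying \emph{along} $L$, and the intersection points of Definition \ref{def: numbers} are the endpoints of the arcs of $c\ssm L$, not transversal punctures of $L$. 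Consequently a curve with $\bdr{c}=2$, $\bbl{c}=0$ is the union of one arc $\alpha\subset L$ and one arc $\beta\subset P$; choosing an innermost such curve, it bounds a disk in $P\ssm(L\cup S)$ which is a boundary-compression disk, and compressing along it reduces the intersection with $P$ --- this is the paper's argument, and no meridional alternative arises. In your version the meridional branch is dismissed because it ``violates the normal-position assumption that $S$ has no meridional boundary components,'' which is a non sequitur: the mere existence of a meridional compressing disk does not contradict the absence of meridional boundary components (only actually performing the compression would create such components, after which you would have to analyze the new surface, which you do not do). So as written the delicate case of (3) is not closed, and it stems from a misreading of what $\bdr{c}$ counts.

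Two smaller points. For (7), you claim the bigon move ``simultaneously eliminates the bubble passage and the indicated intersection points''; the actual isotopy (Figure \ref{reducing bubbles b}) removes one bubble but \emph{adds} two intersection points, and the contradiction comes only from the lexicographic ordering, in which $\sum_{c}\bbl{c}$ is weighted above $\sum_{c}\bdr{c}$ --- this should be said explicitly, since otherwise the reader is led to believe both coordinates drop. For (5), your one-line push of $c$ across a disk on $\partial B^+$ is too quick: that disk need not be disjoint from the rest of $S$ (other curves of $\calC$, and the saddle-type pieces of $S$ inside the bubble, can lie between the two arcs), so the innermost argument must be run over all of $S\cap\partial B^\pm$ and the isotopy must be compatible with $S\cap B$. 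The paper sidesteps exactly this by invoking Lemma 1(ii) of \cite{Menasco}; if you want a self-contained proof you need to reproduce that argument, not just the parallel-arcs observation.
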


\begin{proof} Let $S\subset S^3 \ssm \NN(L)$ be an essential surface satisfying (i) or (ii). Note that 
in both cases, compressing along a disk $D\subset S^3 \ssm \NN(L)$ with $D\cap S = \partial D$ results 
either in two essential spheres, or a surface in the same isotopy class of $S$. Thus, by the assumption 
on $S$, surfaces obtained by such a compression cannot have lower complexity.
\vskip5pt

\noindent (1) Since $S$ is essential, each subsurface $S_c$ must be planar, as otherwise it 
contains a non-trivial compression disk. If $S_c$ has more than one boundary component then 
compressing along a disk in $H^+$ or $H^-$ whose boundary separates boundary components of $S_c$   
will result in a surface with fewer  intersections  with $P$ in contradiction to the choice of $S$.
\vskip5pt

\noindent (2) By definition, $\bdr{c}$ is the number of endpoints of arcs in $c\ssm L$. 
Since each arc has two endpoints, $\bdr{c}$ is even.
\vskip5pt

\noindent (3) By (2) $\bdr{c}$ is either two or zero. If $\bdr{c} = 0$ and $\bbl{c} = 0$ then $c$ bounds
a disk on $P \ssm L$. Compressing $S$ along this disk reduces the number of intersections with $P$. 

If $\bdr{c} = 2$ and $\bbl{c} = 0$ then $c$ bounds a disk $D$ in $P$ such that 
$\partial D = \alpha \cup \beta$,  where $\alpha$ is an arc in $L$ and $\beta$ 
is an arc in  $P$. Since both $\alpha$ and $\beta$ do not pass through bubbles 
$\partial D$ bounds a disk in $P\ssm L$. By choosing an innermost such $D$ we 
may assume that $D \subset P \ssm (L\cup S)$ and thus $D$ is a boundary compression 
disk. Compressing $S$ along $D$ we get an  isotopic surface with less intersections 
with $P$ which is a contradiction.

\vskip5pt

\noindent (4) Consider the colors of complementary regions of $P \ssm \mathcal{T}$  which 
the curve $c$ intersects.  If $\bdr{c}=0$ every change of colors, of these regions along $c$, 
accounts for one bubble that $c$ meets. Since $c$ is a closed curve the total number of 
color changes is even, and correspondingly $\bbl{c}$  is even.

\vskip5pt

\noindent (5) This claim follows directly from Lemma 1(ii) of \cite{Menasco}.

\vskip5pt

\noindent (6) In this case there is an  ambient isotopy of the surface $S$ which pushes
the disk bounded by the curve $c$ through the bubble thus reducing the number of  bubbles 
met by $S$ by one. The isotopy is indicated by the arrow in Figure \ref{reducing bubbles a}. 
The surface $S$ is assumed to be in normal form,  by Definition \ref{def: normal position}. 
This contradicts the assumption on the choice of $S$ as minimizing  the complexity as in 
Definition \ref{def: complexity}.

\vskip5pt

\noindent (7) The proof in this case is the same as in case (6), using the isotopy described 
by Figure \ref{reducing bubbles b}. Note that as a result of  the isotopy we see two more 
intersection points with $L$ but one less bubble.

\vskip5pt

\noindent (8) If such an arc $\alpha \subseteq c\cap L$ then by pushing $S$ through $P$ 
in a neighborhood of $\alpha$ we reduce the number of intersection points by 2, in 
contradiction to the minimal complexity of $S$.
\end{proof}

\begin{figure}[ht]
   \centering
    \begin{overpic}[width=3.5cm]{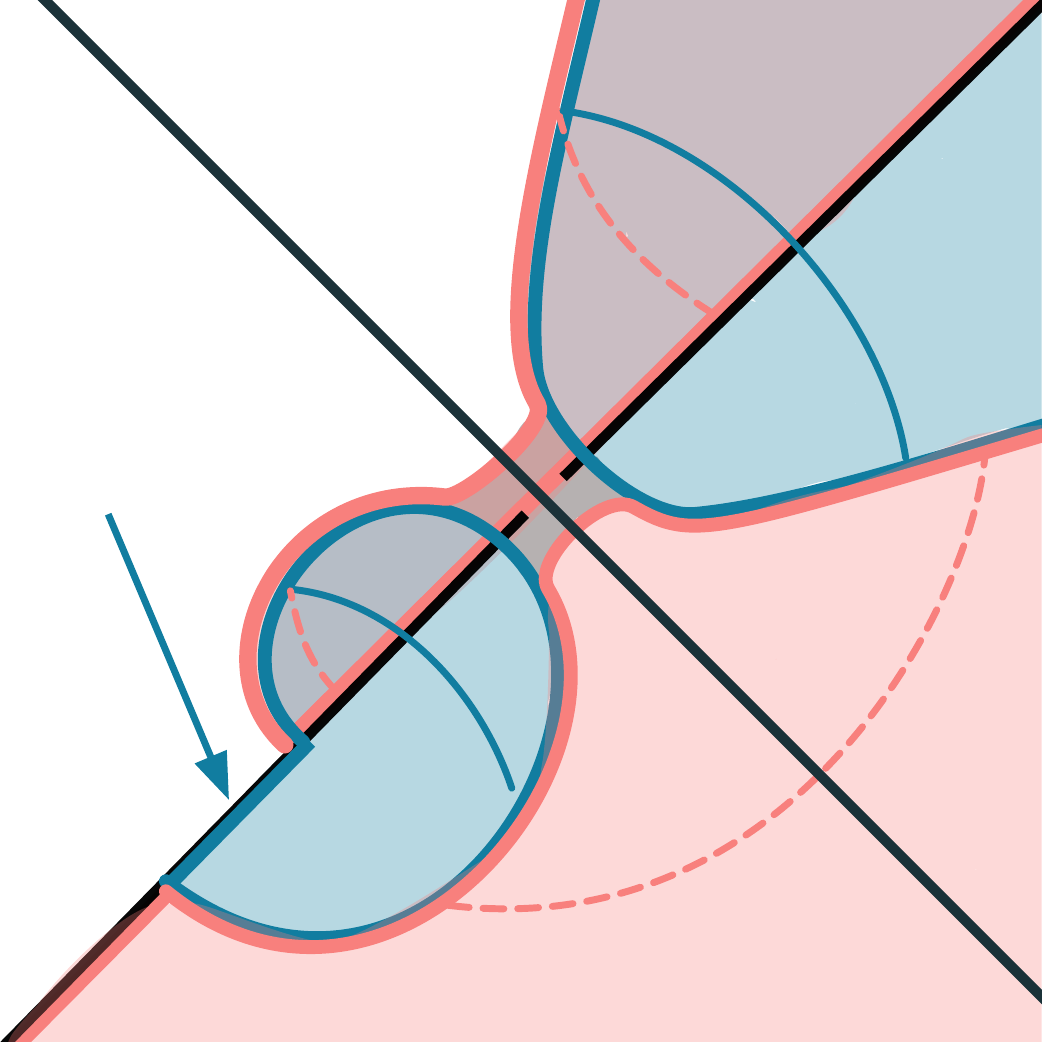}
    \put(5,55){$\alpha$}
    \end{overpic}
    \includegraphics[width=1.5cm]{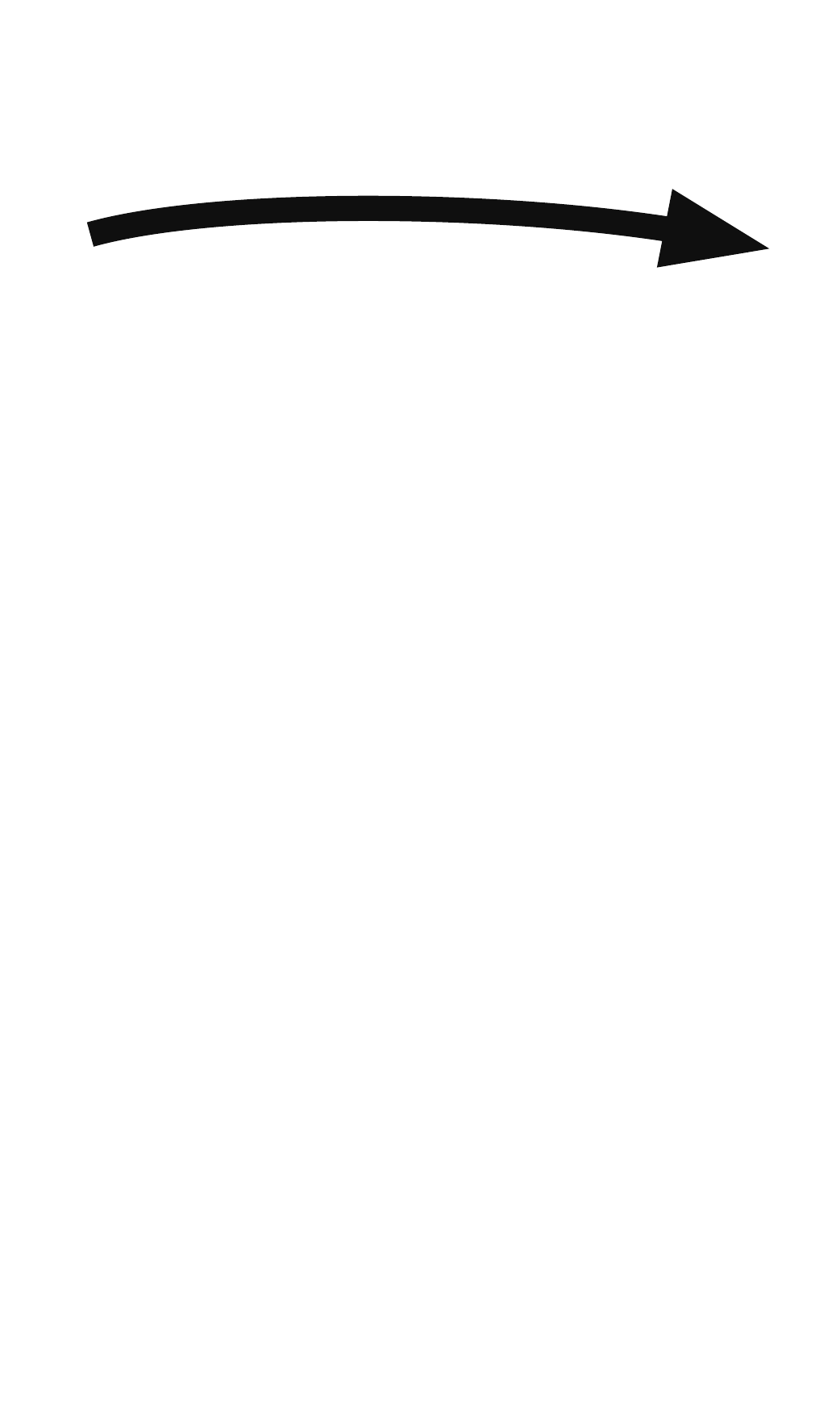}
    \includegraphics[width=3.5cm]{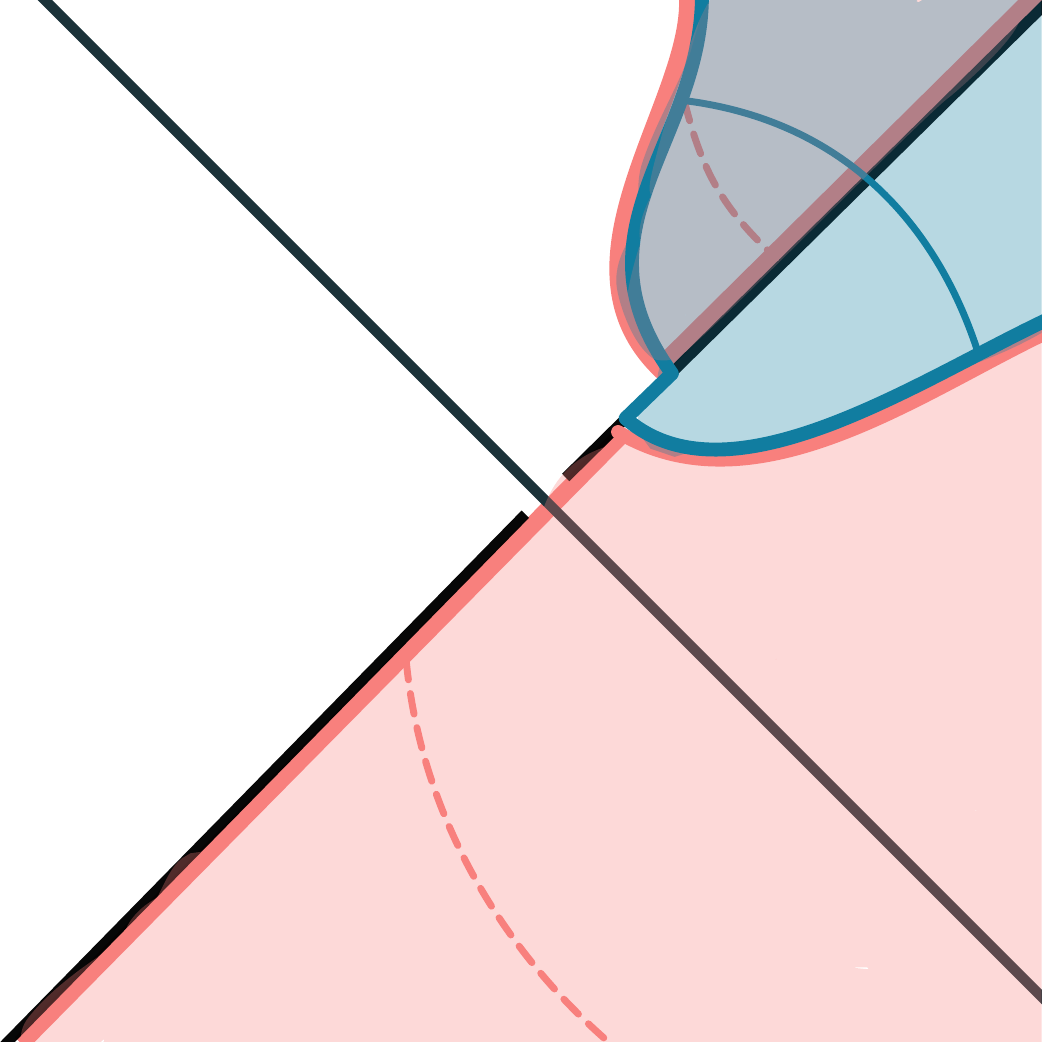}
    \caption{The isotopy in Case (6) of Lemma \ref{lem: properties of curves}}
    \label{reducing bubbles a}
\end{figure}

\begin{figure}[ht]
   \centering
    \begin{overpic}[width=3.5cm]{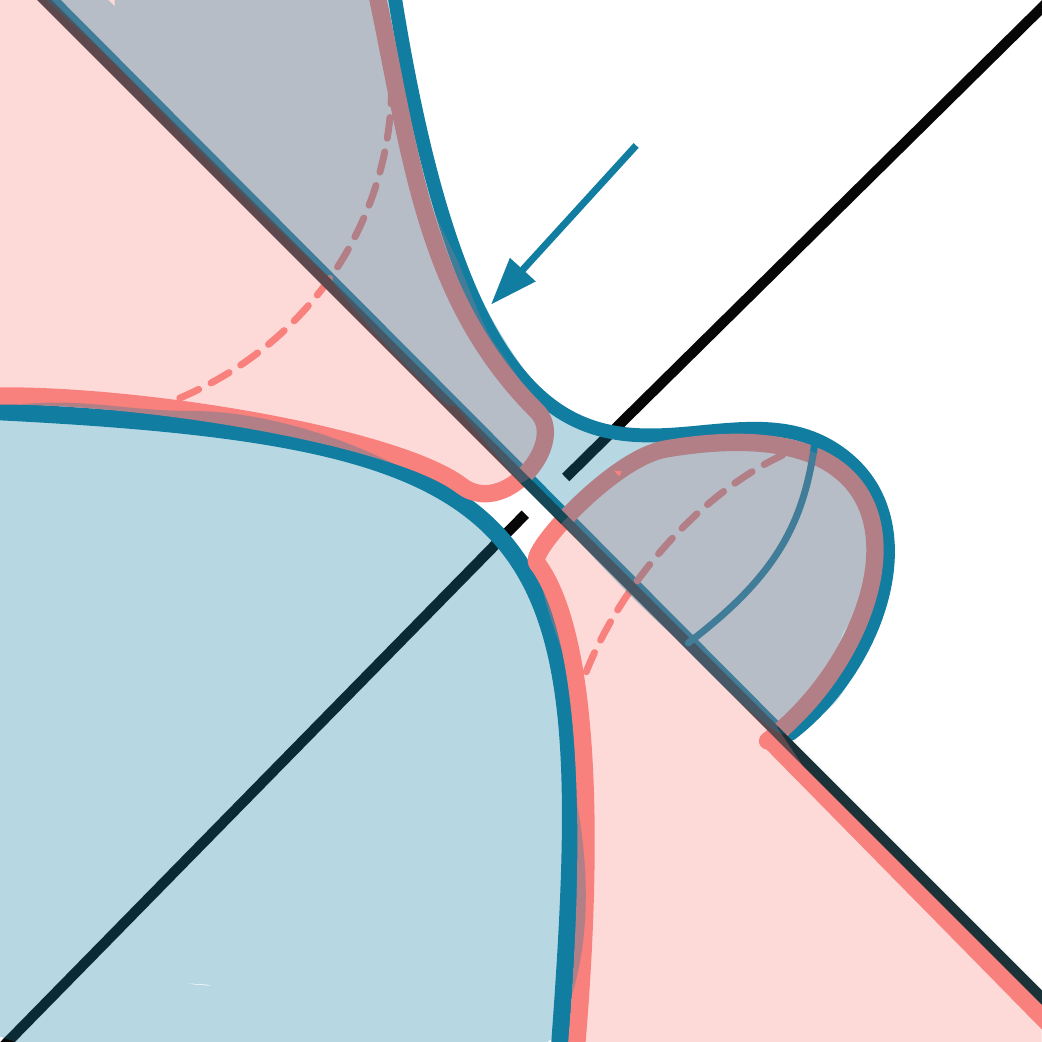}
    \put(60,90){$\alpha$}
    \end{overpic}
    \includegraphics[width=1.5cm]{figures/rightarrow.pdf}
    \includegraphics[width=3.5cm]{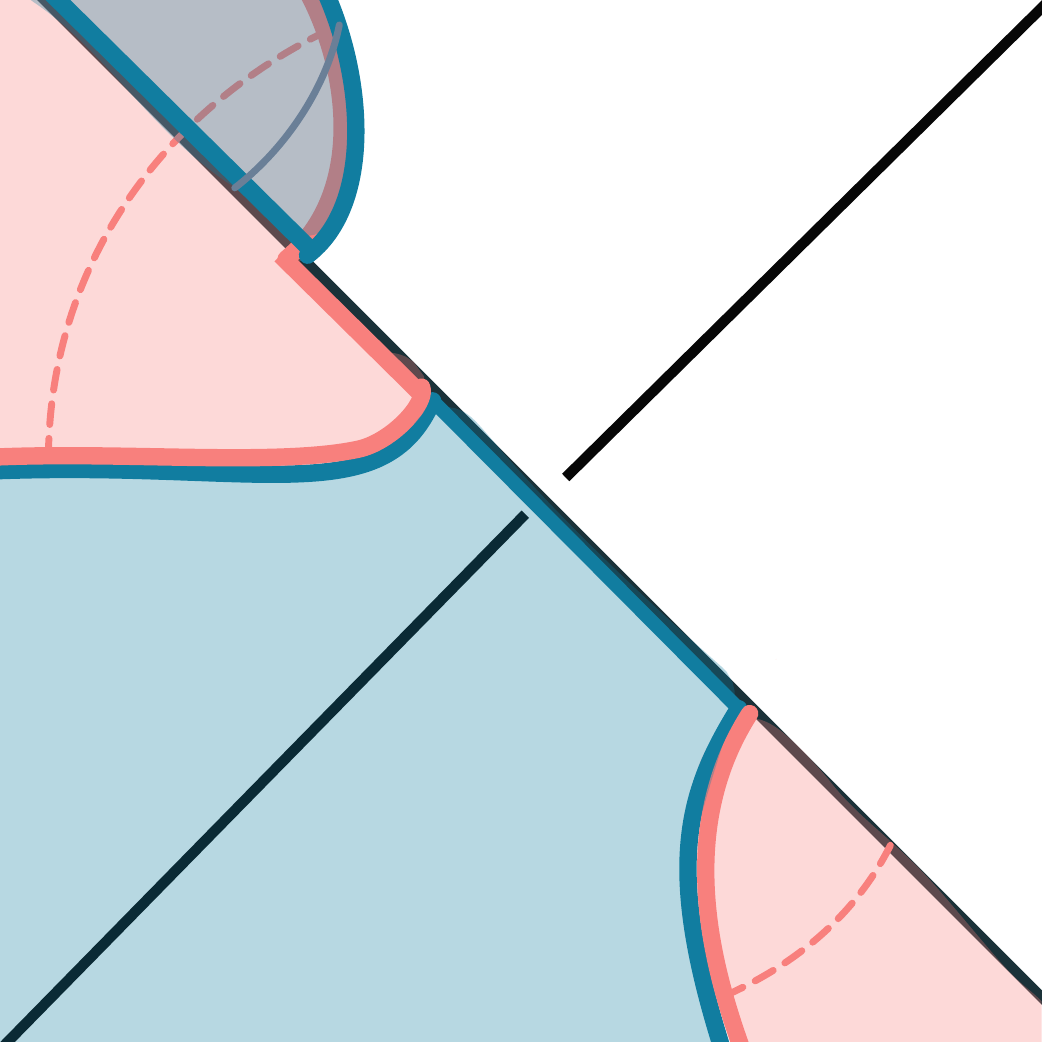}
    \caption{The isotopy in Case (7) of Lemma \ref{lem: properties of curves}} 
    \label{reducing bubbles b}
\end{figure}

\begin{definition}
A surface $S$ is \emph{taut} if it satisfies the conditions specified in 
Lemma~\ref{lem: properties of curves}.
\end{definition}

\begin{remark}
Note that if $S$ is taut then 
    $\calC_{0,0}=\calC_{0,2}=\calC_{i,2k+1}=\calC_{2k+1,0}=\emptyset$ for all $i,k\in\bbN\cup \{0\}$.
\end{remark}

\begin{remark}\label{rem: Assume taut} 
From now on we assume that the surface $S$ is taut.
\end{remark}

\medskip

\section{Euler Characteristic and curves of intersection} \label{sec: Curves and the 
Euler Characteristic}

\vskip10pt
\subsection{Distributing Euler characteristic among curves}
For each curve $c\in \calC$ we will define the \emph{contribution} of $c$, and show 
that the Euler characteristic of $S$ can be computed by summing up the contributions 
of curves $c\in\calC$. 

\begin{definition}
The \emph{contribution} $\chi_+(c)$  of a curve $c\in\calC$ is defined by
$$\chi_+(c) = \frac{\chi(S_c)}{|\partial S_c|} - \frac{1}{4}\bbb{c}.$$
\end{definition}
Note that if $S$ is taut, then $S_c\homeo D^2$, and therefore $\chi_+(c)=1 - \frac{1}{4}\bbb{c}$.

\begin{lemma}\label{lem: Euler characteristic from Euler contributions}
If $S \subset S^3 \ssm \NN(L)$ is taut then $\chi(S)=\sum_{c\in\calC} \chi_+(c)$.
\end{lemma}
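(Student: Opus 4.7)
The plan is to use additivity of the Euler characteristic under a decomposition of $\widehat{S}$ along $P^+\cup P^-$ and the bubble spheres $\partial B$, and then pass from $\chi(\widehat{S})$ to $\chi(S)$ by re-attaching the removed Type 2 disks $\calD$.

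First, I decompose $\widehat{S}$ into three kinds of pieces: the components of $\widehat{S}\cap H^+$ (each bounded by curves in $\calC^+$), the components of $\widehat{S}\cap H^-$ (bounded by curves in $\calC^-$), and the disks $\widehat{S}\cap B$ for each bubble $B$. Inclusion-exclusion gives
\[
\chi(\widehat{S}) = \sum_X\chi(\widehat{S}\cap X) - \sum_{X_1,X_2}\chi(\widehat{S}\cap X_1\cap X_2) + \sum_{X_1,X_2,X_3}\chi(\widehat{S}\cap X_1\cap X_2\cap X_3),
\]
where $X$ ranges over $\{H^+, H^-\}\cup\{B:\text{bubble}\}$. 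The top-level sum contributes $\sum_{c\in\calC}\chi(S_c)/|\partial S_c|$, obtained by distributing each component's Euler characteristic equally among its boundary curves in $\calC$, plus the Euler characteristic of the bubble pieces. The pairwise terms account for arcs of $\widehat{S}\cap(P\ssm\mathcal{B})$ and of $\widehat{S}\cap\mathcal{B}^\pm$, while the triple terms account for the equator points where $\partial B$ meets $P$.

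Second, I express the remaining correction in terms of $\bbl{c}$ and $\bdr{c}$. Each bubble-passing by a curve $c$ contributes one arc on the corresponding hemisphere and two equator vertices on $\partial B$; each intersection point of $c$ with $L$ is a corner where the curve meets $\partial\widehat{S}$; and in the taut case, each bubble piece accounts for two bubble-passings, one by a curve in $\calC^+$ and one by a curve in $\calC^-$. A careful accounting shows that the combined contribution of bubble pieces, hemisphere arcs, planar arcs, equator vertices and $L$-intersection corners, together with the $+|\calD|$ coming from re-attaching the Type 2 disks (each a disk with circular boundary in $P\cup L$, contributing $+1$ to $\chi$), collapses to the single correction $-\tfrac{1}{4}\sum_c\bbb{c}$, where the factor $\tfrac{1}{4}$ reflects that each bubble or intersection feature is shared among exactly four counting units of the decomposition.

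The main obstacle is the combinatorial bookkeeping in the second step, in particular verifying the precise factor of $\tfrac{1}{4}$. This requires a detailed local analysis at each bubble (which is met by two $H^\pm$-pieces and two bubble-piece hemisphere arcs) and at each $L$-intersection, making essential use of the taut properties from Lemma~\ref{lem: properties of curves} to rule out the degenerate configurations where this sharing breaks down.
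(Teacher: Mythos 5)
Your decomposition is in essence the same one the paper uses (pieces of $\widehat{S}$ in $H^{\pm}$ and in the bubble balls, glued along arcs on $P\ssm\calB$ and on the hemispheres, with the equator points as the triple locus; the paper packages this as a graph $X$ on $S$ and computes $|X^0|-|X^1|+\sum\chi(\text{regions})$). The problem is that your write-up stops exactly where the lemma begins: the entire content of the statement is the quantitative accounting showing that the gluing corrections, distributed among the curves, come out to precisely $-\tfrac14\bbb{c}$ per curve, i.e.\ the counts $|X^0|=\sum_c(\bbl{c}+\tfrac12\bdr{c})$, $|X^1|=\sum_c(\tfrac32\bbl{c}+\tfrac34\bdr{c})$, bubble regions contributing $\tfrac14\bbl{c}$ per curve, and the $H^{\pm}$ pieces contributing $\chi(S_c)/|\partial S_c|$. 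You assert this ("a careful accounting shows\dots") and then name it as ``the main obstacle,'' so the proposal is a plan with the key step missing rather than a proof; in particular the sharing multiplicities (which features are shared by two curves, which by one, how boundary edges on $L$ are weighted) are exactly what must be verified and are nowhere computed.

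Moreover, the one explicit numerical claim you do make is not right as stated: re-attaching a Type 2 disk $D$ to $\widehat{S}$ is a gluing along the two disjoint arcs $\beta_1,\beta_2\subset\partial T$, so it changes the Euler characteristic by $\chi(D)-\chi(\beta_1\sqcup\beta_2)=1-2=-1$, giving $\chi(S)=\chi(\widehat{S})-|\calD|$, not a correction of $+|\calD|$; also note that the arcs $\beta_i$ lie on $\partial T$, not on $P\cup\calB$, so they are not absorbed by your pairwise intersection terms and need separate treatment. The paper sidesteps this entirely by collapsing each Type 2 disk onto one of its edges (a homotopy equivalence, hence no correction term at all) before counting, which is why no $|\calD|$ appears in its formula. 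Either fix the sign and show how the $-|\calD|$ cancels in your bookkeeping, or adopt the collapsing trick; as written, the proposal both omits the central computation and starts it with an incorrect correction term.
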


\begin{proof}
The union of the collection of all the curves $c\in\calC$ on $S$ is an embedded graph $X$ of $S$. 
Let $X^0$ and $X^1$ denote the vertex and the edge sets of $X$ respectively. The vertices of the 
graph are the points in $S$ corresponding to the points on $c$ which are on 
$P\cap \calB$  (i.e., where $c$ meets a bubble) or $P\cap L$ (i.e., an intersection point of $c$). 
The graph $X$ partitions $S$ into disk regions of three types:
\begin{enumerate}
    \item the subsurfaces $S_c \subseteq \widehat{S} \cap H^\pm$ for $c\in \calC^\pm$, 
    \smallskip
    \item the regions $R \subseteq \widehat{S}\cap B$ where $B\in\calB$ is a 3-ball bounded by a bubble, or
    \smallskip
    \item regions $D \subset S$ corresponding to Type (2) disks.
\end{enumerate}
In case (3), the regions $D$ are disks whose boundary consists of two arcs on $L$ and two edges of $X$. 
By collapsing  each such disk $D$ to one of the edges in $X$ we get a homotopic surface. By abuse of 
notation, we call it $S$, and call the corresponding graph $X$. Note that in the new surface, 
$\partial S \subset X$. It follows that 
\begin{align}\label{eq: chi S using the graph}
\begin{split}
    \chi(S) &= \chi(X) + \sum_{S' \subseteq S \cap H^\pm} \chi(S') + \sum_{R \subseteq S\cap B} \chi(R)\\
    &= |X^0| - |X^1| + \sum_{S' \subseteq S \cap H^\pm} \chi(S') + \sum_{R \subseteq S\cap B} \chi(R).
\end{split}
\end{align}
We compute how each $c\in\calC$ contributes to each of the summands in \eqref{eq: chi S using the graph}: 

\underline{The vertices of $X$.} Every curve 
$c\in\calC$ passes through $2\bbl{c}$ vertices of $X^0$ in the interior of $S$ (because it goes in 
and out of a bubble). Further, it goes through $\bdr{c}$ vertices of $X^0$ in $\partial S$. 
Moreover, each of these vertices belongs to two curves $c\in \calC$. Hence,

\begin{equation}
    |X^0| = \sum_{c\in\calC} (\bbl{c} + \tfrac{1}{2} \bdr{c})
\end{equation} 

\underline{The edges of $X$.} Every curve $c\in \calC$ passes through $2\bbl{c} + \bdr{c}$ 
edges in $X^1$. Note that every edge in $X^1\cap \interior{S}\cap \calB$ or in $X^1\cap \partial S$ 
belongs to exactly one curve in $\calC$, while each edge in $X^1 \cap \interior{S} \ssm \calB$ 
belongs to two curves in $\calC$. Thus, edges in  $X^1\cap \interior{S}\cap \calB$ 
are in one to one  correspondence with the bubbles they meet. Hence,
$$|X^1 \cap \interior{S} \cap \calB | = \sum_{c\in \calC} \bbl{c}.$$
Similarly  each edge in $X^1\cap \partial S$ accounts for 2 in $\bdr{c}$. Hence,
$$|X^1 \cap \partial S  | = \sum_{c\in \calC} \tfrac{1}{2} \bdr{c}.$$
Each edge in $X^1 \cap \interior{S} \ssm \calB$
accounts for two vertices in $X^0$. So the number of these  edges is equal to  
 $$|X^1 \cap \interior{S} \ssm \calB| = \tfrac{1}{2} |X^0| =  
 \tfrac{1}{2}(\sum_{c\in\calC} (\bbl{c} + \tfrac{1}{2} \bdr{c})).$$
Adding these contributions together gives
\begin{equation}
    |X^1| = \sum_{c\in\calC}( \tfrac{3}{2}\bbl{c} + \tfrac{3}{4} \bdr{c}).
\end{equation} 

\underline{Regions $S' \subset S\cap H^\pm$.} To every curve $c\in\calC$ there is a surface 
$S_c \subseteq S\cap H^\pm$, and each such surface is 
associated to $|\partial S_c|$ curves $c\in\calC$. Thus,
\begin{equation} 
    \sum_{S' \subseteq S \cap H^\pm} \chi(S') = \sum_{c\in\calC} \frac{\chi(S_c)}{|\partial S_c|}.
\end{equation}

\underline{Regions $R\subset S\cap B$.} Each curve $c\in \calC$ passes through the 
boundary of $\bbl{c}$ such regions. As each such 
region has four curves passing through its boundary, we have
\begin{equation}
    \sum_{R \subseteq S\cap B} \chi(R) = \sum_{c\in C} \tfrac{1}{4}\bbl{c}.
\end{equation}

\medskip

Summing over all of the above we get,
\begin{align*}
\chi(S) &= |X^0| - |X^1| + \sum_{S' \subseteq S \cap H^\pm} \chi(S') + \sum_{R \subseteq S\cap B} \chi(R)\\
&= \sum_{c\in \calC} (\tfrac{\chi(S_c)}{|\partial S_c|} - \tfrac{1}{4}(\bbl{c} + \bdr{c}))\\
&= \sum_{c\in \calC} \chi_+(c).
\end{align*}
\end{proof}

\subsection{Redistributing positive Euler characteristic among curves}
In order to prove that $L$ is hyperbolic, we would eventually be interested in surfaces 
$S$ with non-negative Euler characteristic, namely spheres, annuli and tori. It would 
thus be useful to distribute the Euler characteristic of $S$ in such a way that each 
summand is non-positive. This would rule out 2-spheres, and show that each summand must 
be zero for $S$ to be a torus or an annulus.

Lemma \ref{lem: Euler characteristic from Euler contributions} shows that the Euler 
characteristic of $S$ is the sum of the contributions of curves in $\calC$.  However, 
some curves might have positive contributions. Our strategy would be to ``pass'' the 
contributions of curves with $\chi_+> 0$ to ``neighbouring'' curves with $\chi_+<0$. 
This will be done by defining $\chi'(c)$ in Definition \ref{def: distributing}, and 
proving in Lemmas \ref{lem: summing chi' gives Euler char} and 
\ref{lem: non-positive contribution of chi'} respectively that $\chi(S)= \sum\chi'(c)$ 
and $\chi'(c)\le 0$.

Our search for ``neighbouring'' curves will use the following definition:
\begin{definition}\label{def: arcs and opposite}
\hfill
\begin{enumerate}
    \item Let $c\in \calC$, two subarcs $\alpha_1,\alpha_2$ of $c$ whose endpoints are not 
    in $\calB \cup L$ are \emph{equivalent} if they are isotopic in $c$ so that the endpoints 
    of each arc in the isotopy are not in $\calB \cup L$.
    Note that for equivalent arcs $\alpha_1,\alpha_2$, we have $\bbl{\alpha_1}=\bbl{\alpha_2}$ 
    and $\bdr{\alpha_1}=\bdr{\alpha_2}$. 
    We will use the term \emph{subarc of $c$} (or simply \emph{arc}) to indicate its 
    equivalence class. Two arcs $\alpha,\alpha'$ are \emph{disjoint} if they are subarcs of 
    different curves or if they have disjoint representatives. Alternatively, two arcs 
    $\alpha,\alpha'$ are non-disjoint, if they are subarcs of the same curve, and they 
    overlap in a bubble or an intersection point.
    \item Two curves (or subarcs of curves) $c,c'\in\calC$ are said to be \emph{opposite 
    along an arc $\alpha$}, or simply \emph{opposite}, if 
    $c\ne c'$ and $\alpha \subset c\cap c' \ssm L$.
    Note that if $c,c'$ are opposite then one of them is in $P^+$ and the other in $P^-$.
\end{enumerate}
\end{definition}

\begin{definition}\label{def: sets}
Denote by $\calC_{\pos}$ (resp. $\calC_{=0},\calC_{\np},\calC_{\negative}$) the collection of 
all $c\in \calC$ such that $\chi_+(c)>0$ (resp. $=0,\le 0, <0$). 
\end{definition}

Note that if $S$ is taut, then
$\calC_{\pos} = \calC_{2,0} \cup \calC_{1,2}$ and 
$\calC_{=0} = \calC_{4,0}\cup \calC_{2,2} \cup \calC_{0,4}$. 
We begin by studying the curves in $\calC_{\pos}$:
\medskip

\begin{figure}
    \centering
    \subfigure[]{
   \begin{overpic}[height=3.5cm]{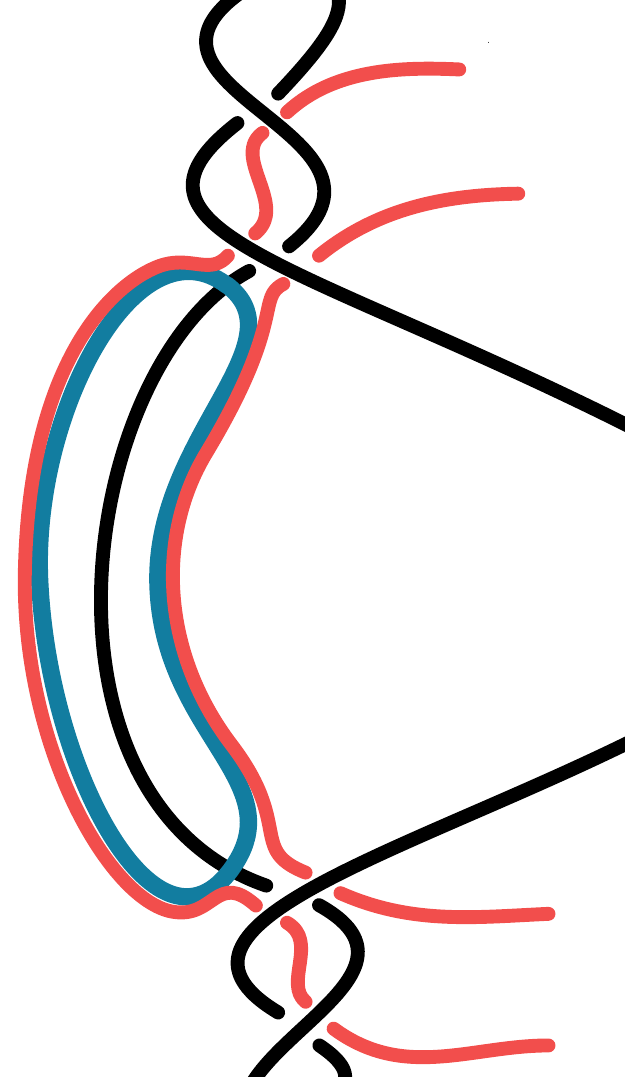}
\put(48,90){$\kappa'$}
\put(50,78){$\kappa''$}
\put(-7,40){$c$}
\end{overpic}}
    \hskip 2cm
    \subfigure[]{
    \begin{overpic}[height=3.cm]{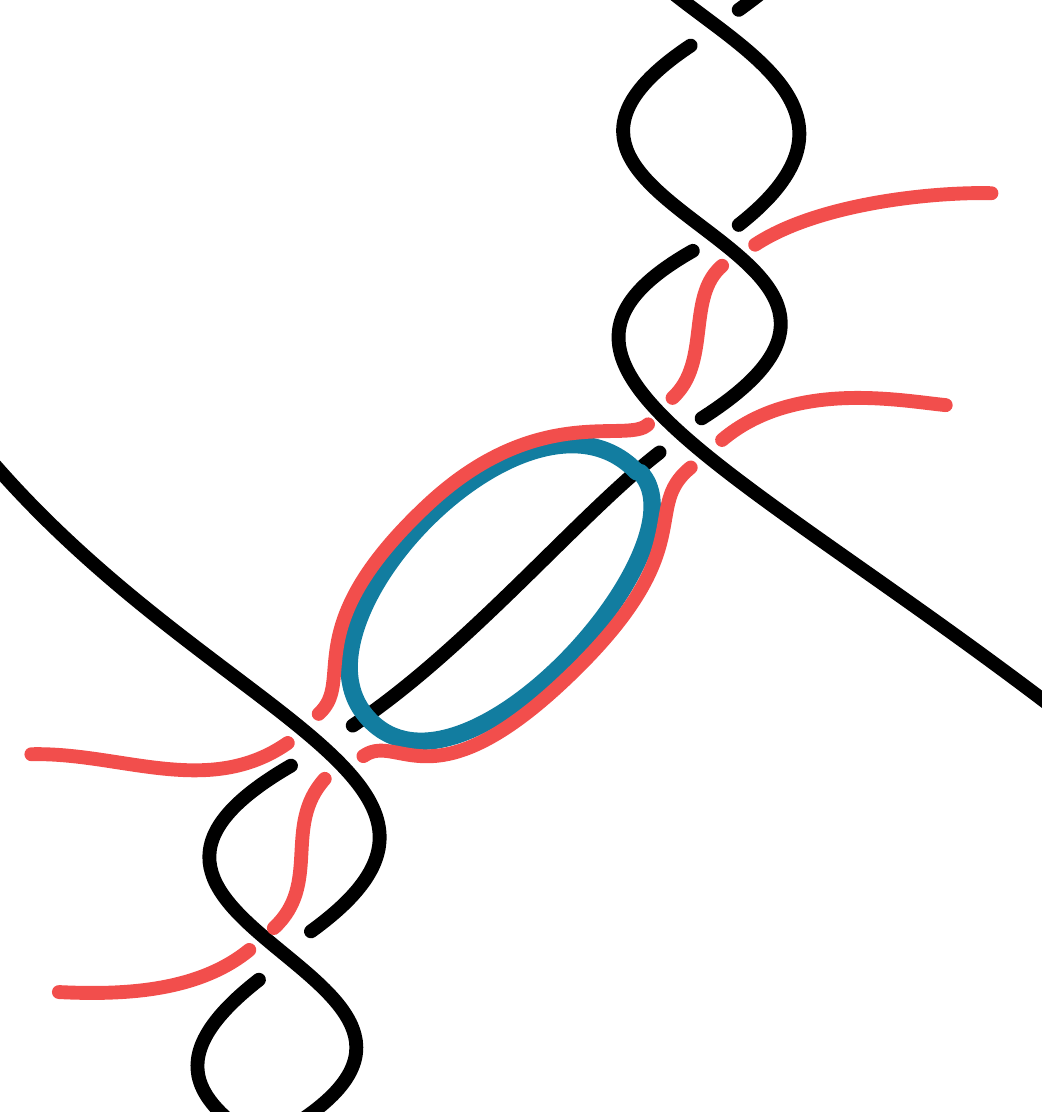}
\put(90,80){$\kappa'$}
\put(90,60){$\kappa''$}
\put(30,55){$c$}
\end{overpic}
}
    \caption{$c\in C_{2,0}$ and the opposite arcs $\kappa'$ and $\kappa''$.}\label{fig:c2}
    
\end{figure}

\underline{Curves in $\calC_{2,0}$.}
Let $c\in \calC_{2,0}$. The two possible configurations for $c$ are shown in Figure ~\ref{fig:c2}. 

\smallskip

In case (a), opposite to $c$ there are two arcs $\kappa'$ and $\kappa''$ shown in 
Figure \ref{fig:c2} passing through two and four bubbles respectively.
Let $\calK_{4,0}$ denote the collection of all such arcs $\kappa''$ opposite to some 
$c\in \calC_2$. (In this and following discussion, the subscript $4,0$ indicates 
that the arcs in $\calK_{4,0}$ pass through four bubbles and zero intersection points.)

In case (b), opposite to $c \in \calC_{2,0}$ there are two arcs, $\kappa'$ and $\kappa''$ 
shown in Figure \ref{fig:c2}.
Each of the arcs $\kappa'$ and $\kappa''$ passes through three bubbles.
Let $\calK_{3,0}$  denote the collection of all such $\kappa',\kappa''$ 
which are opposite to some $c\in\calC_{2,0}$ as described above. 

Each arc $\kappa'\in \calK_{3,0}$ is part of some closed curve $c'\in \calC$. We would 
like to distinguish those $\kappa'$ for which $c'\in \calC_{4,0}$ as those have $\chi_+(c')=0$. 
We denote the subcollection of all $\kappa'\in\calK_{3,0}$ for which $c'\notin \calC_{4,0}$ by
$\widehat\calK_{3,0}\subset \calK_{3,0}$.

Let $\kappa'\in\calK_{3,0} \ssm \widehat\calK_{3,0}$. 
Then $\kappa'$ is a subarc of a curve $c'$ with $\bbl{c'}=4$, which is opposite to some 
$c\in\calC_{2,0}$.  
 Note that this case can only occur in the ``corners'' of the plat, 
as shown in Figure \ref{fig: cases of kappa tilde} for the top left ``corner''. 
Let $\tild{\kappa}$ be the arc shown in 
Figure \ref{fig: cases of kappa tilde}.
Let $\tild{\calK}$ denote the collection of all $\tild{\kappa}$ constructed in this way.

\medskip
\underline{Curves in $\calC_{1,2}$.}
Schematically, a curve $c\in\calC_{1,2}$ must be as shown in Figure ~\ref{fig:c12}.

\begin{remark}\label{rem: c12 passes through top or bottom}
As $c$ emanates from $c\cap L$ into regions of different colors, $c\cap L$ cannot pass 
over a crossing which is not the top or bottom in its twist box.
\end{remark}

\smallskip
\begin{figure}[ht!]
\centering
\begin{overpic}[width=3.5cm]{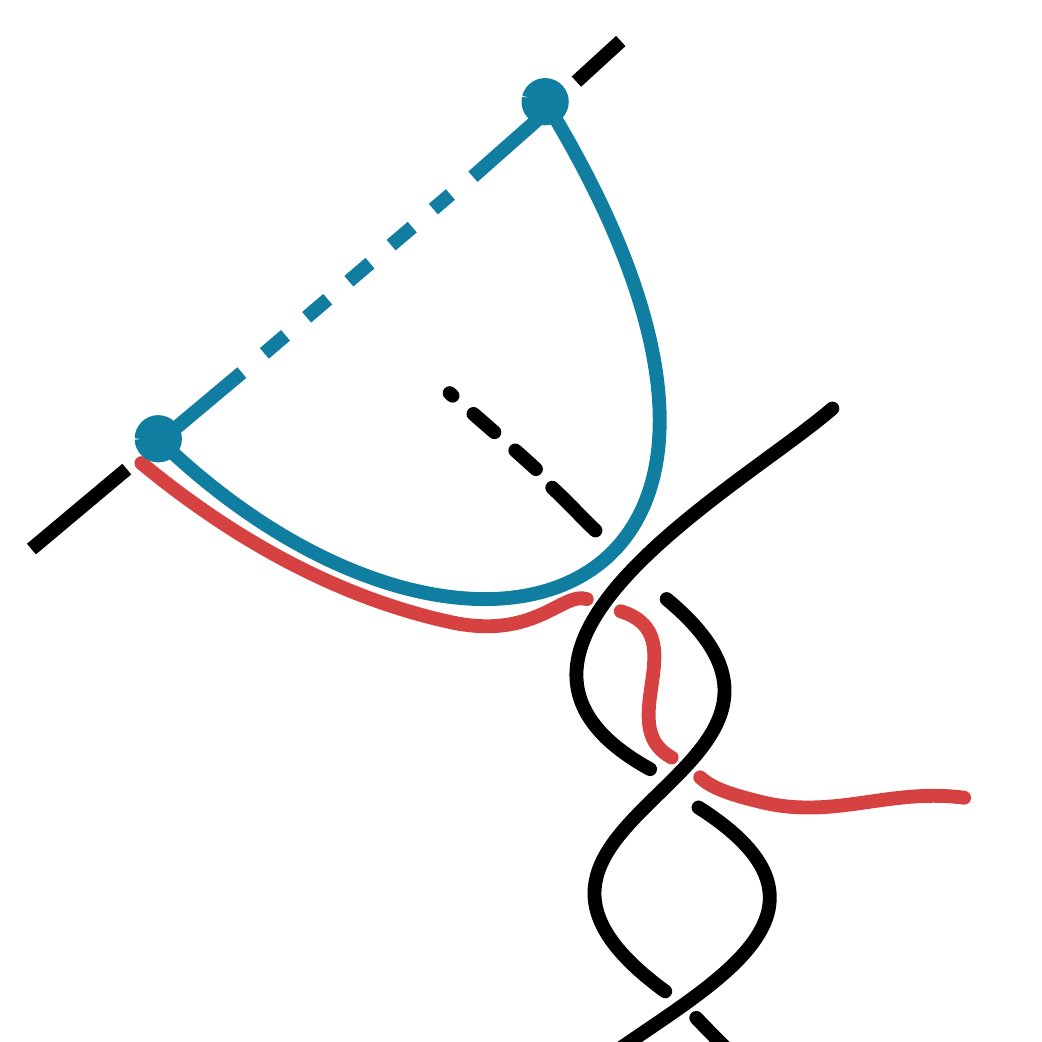}
\put(65,75){$c$}
\put(95,23){$\kappa$}
\end{overpic}
    \caption{$c\in C_{1,2}$  and the opposite arc $\kappa$.}\label{fig:c12}
\end{figure}

Similarly, opposite to a curve $c\in\calC_{1,2}$ there is an arc $\kappa$ passing through two bubbles 
and one intersection point, shown in  Figure \ref{fig:c12}.
Let $\calK_{2,1}$ be the collection of all such $\kappa$ opposite to some $c\in\calC_{1,2}$.

Define $\calK =\widehat\calK_{3,0}\cup \calK_{4,0} \cup \tild{\calK}\cup \calK_{2,1}$.
\begin{lemma}\label{lem: Kappaare disjoint}
Arcs in $\calK$ are pairwise disjoint.
\end{lemma}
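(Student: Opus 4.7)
The plan is to proceed by direct case analysis on the four families comprising $\calK$. By Definition~\ref{def: arcs and opposite}, two arcs that are subarcs of different curves of $\calC$ are automatically disjoint, so I only need to consider $\kappa_1, \kappa_2 \in \calK$ that are subarcs of the same curve $c' \in \calC$, and show they cannot overlap in a bubble or intersection point unless they represent the same equivalence class.

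Since $\kappa_1, \kappa_2 \subset c'$ lie on the same hemisphere $P^\pm$, Lemma~\ref{lem: properties of curves}(5) implies that $c'$ passes through each bubble $B$ at most once on that hemisphere. Hence, if both arcs pass through $B$, they share the unique arc of $c'$ on $\partial B^\pm$ and coincide locally at $B$. I would then exploit the observation that each arc $\kappa_i \in \calK$ is opposite to a ``dual'' curve $c_i \in \calC_{2,0} \cup \calC_{1,2}$ (or is a corner arc in $\tild\calK$), and that the local picture of $\kappa_i$ near $B$, together with the type of $\kappa_i$, pins down $c_i$ uniquely via the local rules drawn in Figures~\ref{fig:c2} and \ref{fig:c12}.

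For each pair of types among $\widehat\calK_{3,0}$, $\calK_{4,0}$, $\tild\calK$, $\calK_{2,1}$, I would verify that the shared local data at $B$ (or at a shared intersection point, which can only arise between two arcs in $\calK_{2,1}$) forces either $c_1 = c_2$, and hence $\kappa_1 = \kappa_2$ after tracing the definition of each family, or a contradiction with the tautness conditions (5)--(7) of Lemma~\ref{lem: properties of curves} or with the very definition of $\widehat\calK_{3,0}$ (which excludes those $\kappa'\in\calK_{3,0}$ whose containing curve lies in $\calC_{4,0}$). For the case of a shared intersection point, Remark~\ref{rem: c12 passes through top or bottom} constrains how a $\calC_{1,2}$ curve can enter its twist box, which is the essential extra input needed to conclude $c_1 = c_2$.

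The main obstacle I anticipate is the corner case involving arcs in $\tild\calK$, which are introduced separately to handle the plat corners and whose defining local picture (Figure~\ref{fig: cases of kappa tilde}) differs in flavor from the other three families; each interaction with $\calK_{4,0}$, $\widehat\calK_{3,0}$, and $\calK_{2,1}$ must be checked individually. The individual verifications are elementary, but organizing the bookkeeping so that no configuration is missed is the real work of the proof.
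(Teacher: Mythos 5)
Your high-level strategy matches the paper's (a shared bubble or intersection point should force the two arcs to have the same opposite curve, hence to coincide; and only arcs of $\calK_{2,1}$ can share an intersection point), but the decisive step is missing from your plan. You assert that ``the local picture of $\kappa_i$ near $B$, together with the type of $\kappa_i$, pins down $c_i$ uniquely via the figures,'' and then defer all verification to an unexecuted case-by-case check over pairs of families. That assertion is exactly what has to be proved, and the mechanism the paper uses for it never appears in your proposal: the intersection of a $\calK$-arc with a single bubble determines its intersection with the \emph{entire} twist box, such an arc must run along either the top or the bottom bubble of the box, and --- crucially --- because the plat is $3$-highly twisted it cannot involve both the top and the bottom bubble; only then does the arc's exit from the twist box, and with it the opposite curve in $\calC_{2,0}\cup\calC_{1,2}\cup\calC_{4,0}$, become determined by the shared bubble passage, uniformly for all four families (including $\tild\calK$, so no separate corner bookkeeping is needed). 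None of the tools you list (tautness (5)--(7), the definition of $\widehat\calK_{3,0}$, Remark~\ref{rem: c12 passes through top or bottom}) supplies this; and without the highly-twisted input the determination genuinely fails, since in a two-crossing box the ``top pair'' and ``bottom pair'' of bubbles coincide and arcs dual to distinct curves can share bubbles.

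A smaller but real inaccuracy: you deduce from Lemma~\ref{lem: properties of curves}(5) that $c'$ passes through each bubble at most once on its hemisphere, so that two subarcs meeting $B$ must share ``the unique arc of $c'$ on $\partial B^\pm$.'' Condition (5) only forbids two passages on the same side of $L$; a taut curve can pass through the same bubble twice on opposite sides of $L$ (this is precisely the first configuration in Figure~\ref{fig: C4}, treated as Case~0 in Lemma~\ref{lem: c in C4}). So the reduction to a common passage needs to come from the meaning of ``overlap'' in Definition~\ref{def: arcs and opposite} together with (5), not from (5) alone.
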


\begin{proof}
By definition (see Definition \ref{def: arcs and opposite}), two arcs are not-disjoint if they 
overlap in a bubble or an intersection point.

One can check that the intersection of an arc $\kappa$ with a bubble in a twist box 
determines its intersection with the entire twist box, and that $\kappa$ must pass 
through either the top or bottom bubble in this twist box.
Because the plat is 3-highly twisted, an arc $\kappa$ cannot pass through both the 
top and bottom bubbles of the twist box. 
The part of the arc leaving the twist box determines the curve 
$c\in \calC_{2,0}\cup \calC_{1,2} \cup \calC_{4,0}$ opposite to it.
Thus, two arcs $\kappa_1,\kappa_2$ which overlap in a bubble determine the same 
opposite curve $c$, hence they are identical.

Similarly, if two arcs $\kappa_1,\kappa_2$ overlap in an intersection point, they are 
opposite to the same curve $c\in \calC_{1,2}$, and they are identical.
\end{proof}

Recall that our goal is to redistribute the Euler characteristic among curves so that 
each will contribute non-positively. The quantity $\chi'$ defined below is the sought for redistribution.

\begin{definition}\label{def: distributing}
Let $c'\in \calC_{\np}$. Let $n_3$ (resp. $\widehat{n}_3$; $n_4$; $\tild{n}$; $n_{2,1}$) be the number 
of subarcs $\kappa \in \calK_{3,0}$ (resp. $\widehat\calK_{3,0}$; $\calK_{4,0}$; $\tild{\calK}$;
$ \calK_{2,1}$) in  $c'$. 
We associate to $c'$ the following quantity
\[
    \chi'(c') = \chi_+(c')+ \tfrac{1}{4}\widehat{n}_3 + \tfrac{1}{2}n_4 + \tfrac{1}{4}\tild{n} + 
    \tfrac{1}{4} n_{2,1}.
 \]
\end{definition}

The next lemma shows that $\chi'$ is a redistribution of the Euler characteristic of $S$ 
among curves in $\calC_{\le 0}$. 
    
\begin{lemma}\label{lem: summing chi' gives Euler char}
$\chi(S) = \sum_{c'\in\calC_{\np}} \chi'(c').$
\end{lemma}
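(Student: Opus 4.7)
The plan is to start from Lemma \ref{lem: Euler characteristic from Euler contributions}, split the resulting sum $\chi(S)=\sum_{c\in\calC}\chi_+(c)$ according to the sign of $\chi_+$, and show that the extra terms added in Definition \ref{def: distributing} exactly absorb the positive contributions. Since, by Lemma \ref{lem: Kappaare disjoint}, the arcs in $\calK$ are pairwise disjoint, each arc $\kappa\in\calK$ contributes to the count $n_\bullet(c')$ for exactly one curve $c'\in\calC$. Hence the lemma reduces to verifying two claims: (i) every arc in $\calK$ is a subarc of some curve in $\calC_{\np}$, and (ii)
\[
\sum_{c\in\calC_{\pos}}\chi_+(c) \;=\; \tfrac{1}{4}|\widehat{\calK}_{3,0}| + \tfrac{1}{2}|\calK_{4,0}| + \tfrac{1}{4}|\tild{\calK}| + \tfrac{1}{4}|\calK_{2,1}|.
\]

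For (i) I would use Lemma \ref{lem: properties of curves}(2) and (4) case by case: a curve $c'$ carrying an arc of $\calK_{4,0}$ has $\bbl{c'}\ge 4$ hence $\chi_+(c')\le 0$; a curve carrying an arc of $\widehat{\calK}_{3,0}$ has $\bbl{c'}\ge 3$ and, being excluded from $\calC_{4,0}$, parity yields $\bbb{c'}\ge 5$; a curve carrying an arc of $\calK_{2,1}$ has $\bbl{c'}\ge 2$ and, by the parity of $\bdr{c'}$, $\bdr{c'}\ge 2$; and for $\tild{\calK}$ one reads $\chi_+\le 0$ off the corner construction. For (ii) I would match $\calC_{\pos}$ with $\calK$ term by term. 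Each $c\in\calC_{1,2}$, for which $\chi_+(c)=\tfrac14$, is matched with the unique opposite arc in $\calK_{2,1}$. Each $c\in\calC_{2,0}$ of type (a) of Figure \ref{fig:c2}, for which $\chi_+(c)=\tfrac12$, is matched with its unique opposite arc in $\calK_{4,0}$. Each $c\in\calC_{2,0}$ of type (b) of Figure \ref{fig:c2} has two opposite arcs in $\calK_{3,0}$, each contributing $\tfrac14$ to the right-hand side either directly through $\widehat{\calK}_{3,0}$ or, when the containing curve of the opposite arc lies in $\calC_{4,0}$, through the corresponding arc $\tild\kappa\in\tild{\calK}$. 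Summing these matched contributions yields (ii).

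The main obstacle is the case-(b) bookkeeping and the definition of $\tild{\calK}$: one must check that the map $\calK_{3,0}\ssm\widehat{\calK}_{3,0}\to \tild{\calK}$, $\kappa'\mapsto\tild\kappa$, is a well-defined bijection, that each $\tild\kappa$ in fact lies in a curve of $\calC_{\np}$ (and not again in $\calC_{4,0}$), and that these rerouted arcs remain in the disjoint families controlled by Lemma \ref{lem: Kappaare disjoint}. Once this is confirmed, the counting identity in (ii) holds and the lemma follows.
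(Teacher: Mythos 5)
Your proposal follows essentially the same route as the paper: reduce via Lemma \ref{lem: Euler characteristic from Euler contributions} to showing that the positive contributions $\tfrac12|\calC_{2,0}|+\tfrac14|\calC_{1,2}|$ equal the corrective terms $\tfrac14|\widehat\calK_{3,0}|+\tfrac12|\calK_{4,0}|+\tfrac14|\tild\calK|+\tfrac14|\calK_{2,1}|$, using the opposite-arc matching ($|\calC_{2,0}|=\tfrac12|\calK_{3,0}|+|\calK_{4,0}|$, $|\calC_{1,2}|=|\calK_{2,1}|$) together with the correspondence $|\calK_{3,0}\ssm\widehat\calK_{3,0}|=|\tild\calK|$, which is exactly the paper's argument. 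The points you flag (the bijection onto $\tild\calK$, and that every arc of $\calK$ lies on a curve of $\calC_{\np}$) are handled in the paper by the construction of $\tild\calK$ and by the bubble/intersection counts you indicate, so your outline is correct.
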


\begin{proof}
Since by Lemma \ref{lem: Euler characteristic from Euler contributions}, 
$\chi(S) = \sum_{c\in \calC} \chi_+ (c)$, it remains to prove that 
$$\sum_{c\in \calC} \chi_+ (c) =  \sum_{c'\in\calC_{\np}} \chi'(c').$$
Subtracting $\sum_{c'\in\calC_{\np}}\chi_+(c')$ from both sides and recalling that 
$\calC_{\np} = \calC \ssm \calC_{\pos}$, we have to show
$$\sum_{c\in\calC_{\pos}} \chi_+(c) = \sum_{c'\in \calC_{\np}} (\chi'(c')-\chi_+(c')).$$
The left hand side is simply $\tfrac{1}{2}|\calC_{2,0}| + \tfrac{1}{4}|\calC_{1,2}|$ since 
$\calC_{\pos} = \calC_{2,0} \cup \calC_{1,2}$ and $$\chi_+(c)=\begin{cases} \tfrac{1}{2} & 
\mbox{if }c\in\calC_{2,0} \\ \tfrac{1}{4} & \mbox{if }c\in\calC_{1,2} \end{cases}.$$

By the definition of $\chi'$, the right hand side gives $\tfrac{1}{4}|\widehat\calK_{3,0}| +
\tfrac{1}{2}|\calK_{4,0}| + \tfrac{1}{4}|\tild{\calK}| + \tfrac{1}{4}|\calK_{2,1}|$.
Note that for every arc $\kappa\in\calK_{3,0} \ssm \widehat\calK_{3,0}$ there exists a \emph{unique} arc
$\tild{\kappa}\in\tild{\calK}$. Hence we have $|\calK_{3,0} \ssm \widehat\calK_{3,0}| =|\tild{\calK}|$.
Therefore, the sum becomes $\frac{1}{4}|\calK_{3,0}| + \frac{1}{2}|\calK_{4,0}|+\tfrac{1}{4}|\calK_{2,1}|$.

Since every curve in $\calC_{2,0}$ is opposite to either two arcs in $\calK_{3,0}$, 
or one arc in $\calK_{4,0}$ we get $|\calC_{2,0}| = \frac{1}{2}|\calK_{3,0}| + |\calK_{4,0}|$ 
which, after dividing by 2, gives 
\begin{equation}\label{eq: C2}
    \tfrac{1}{2}|\calC_{2,0}| = \tfrac{1}{4}|\calK_{3,0}| + \tfrac{1}{2}|\calK_{4,0}|.
\end{equation}
Similarly, every curve in $\calC_{1,2}$ is opposite to one arc in $\calK_{2,1}$ and so 
$|\calC_{1,2}|=|\calK_{2,1}|$ which, after dividing by 4, gives 
\begin{equation}\label{eq: C12}
\tfrac{1}{4}|\calC_{1,2}|=\tfrac{1}{4}|\calK_{2,1}|
\end{equation} 
Adding together \eqref{eq: C2} and \eqref{eq: C12} completes the proof.
\end{proof}

The next lemma shows that indeed $\chi'$ is non-positive.

\begin{lemma}\label{lem: non-positive contribution of chi'}
 $\chi'(c')\le 0$ for all $c'\in\calC_{\np}$.
\end{lemma}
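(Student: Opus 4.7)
The plan is to rewrite the target inequality $\chi'(c') \le 0$ in the equivalent form
\[
    \bbb{c'} \;\ge\; 4 + \widehat n_3 + 2 n_4 + \tild n + n_{2,1}.
\]
The heart of the argument is that each arc $\kappa\in\calK$ contained in $c'$ already contributes generously to $\bbb{c'}$: direct inspection gives $\bbb{\kappa}=3,4,3$ for $\kappa$ in $\widehat\calK_{3,0},\calK_{4,0},\calK_{2,1}$ respectively, and the corresponding value for $\tild\calK$ is read off Figure~\ref{fig: cases of kappa tilde}. In each case the contribution exceeds the corresponding ``cost'' per arc (namely $1,2,1,1$) by at least $2$.

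Because arcs in $\calK$ are pairwise disjoint on $c'$ by Lemma~\ref{lem: Kappaare disjoint}, these contributions add up to give
\[
    \bbb{c'} \;\ge\; \sum_{\kappa\subset c',\ \kappa\in\calK}\bbb{\kappa} \;\ge\; 2k + \text{(total cost)},
\]
where $k$ is the total number of $\calK$-arcs in $c'$. If $k\ge 2$ the inequality is immediate, and if $k=0$ then $\chi'(c')=\chi_+(c')\le 0$ directly from the definition of $\calC_{\np}$.

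The technical core of the proof is therefore the single-arc case $k=1$. Here one must show that the complementary subarc $c'\setminus\kappa$ crosses at least two further bubbles or intersection points. I would argue this by tracking the two endpoints of $\kappa$: the parity of $\bbb{\kappa}$ dictates whether these endpoints lie in regions of the same or different colors, and at least one extra crossing is forced by color-matching for $c'$ to close up. The upgrade from one to two extra crossings uses the parity constraints of Lemma~\ref{lem: properties of curves}(2),(4) (forcing $\bdr{c'}$ even, and $\bbl{c'}$ even whenever $\bdr{c'}=0$), combined with Observation~\ref{obs: single arc} to rule out short closing arcs incompatible with tautness.

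The main obstacle, I expect, will be the sharp case $\bbb{c'}=4$ (that is, $c'\in\calC_{=0}$), where the bound is tight and every potential $\calK$-arc in $c'$ must be excluded individually. For $c'\in\calC_{4,0}$, a $\widehat\calK_{3,0}$-arc is ruled out immediately by the definition of $\widehat\calK_{3,0}$, while $\calK_{4,0}$, $\calK_{2,1}$, and $\tild\calK$-arcs in the various subcases $\calC_{4,0},\calC_{2,2},\calC_{0,4}$ are excluded by showing that no admissible closing subarc exists in the plat diagram under the tautness restrictions of Lemma~\ref{lem: properties of curves}, in particular its color-alternation and ``no short bubble-hopping'' clauses (6)--(8).
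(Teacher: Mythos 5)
Your overall skeleton is the same as the paper's: rewrite $\chi'(c')\le 0$ as a lower bound on $\bbb{c'}$, use the disjointness of $\calK$-arcs (Lemma~\ref{lem: Kappaare disjoint}) to sum a per-arc excess of $2$, dispose of the cases $k=0$ and $k\ge 2$ at once, and reduce everything to the single-arc case, where one must show $\bbb{c'}\ge \bbb{\kappa}+2$. Up to that point your computation is correct and is the paper's inequality \eqref{eq: chi' inequality} in rearranged form.

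The gap is in the single-arc case itself, which is where the paper spends essentially all of its effort. First, your mechanism ``one extra crossing is forced by color-matching, then upgraded to two by parity'' already fails for $\kappa\in\calK_{4,0}$: there the endpoints of $\kappa$ lie in regions of the \emph{same} color, so color considerations alone allow $c'$ to close up with no further bubbles or intersection points at all, i.e.\ $c'\in\calC_{4,0}$. What excludes this is not tautness but the hypothesis that the plat diagram is twist-reduced (Definition~\ref{def: twist reduced}), which forces the two endpoint regions to be at distance exactly $2$ rather than $0$; parity via Lemma~\ref{lem: properties of curves}(2),(4) then only handles $\bbb{c'}=5$. Second, for $\tild\calK$ the needed distance-$\ge 2$ statement is a corner-by-corner check against Figure~\ref{fig: cases of kappa tilde}, not a parity fact. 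Third, and most seriously, for $\kappa\in\calK_{2,1}$ the dangerous curve is $c'\in\calC_{2,2}$, and excluding it requires a genuine argument: the paper forms the continuation $\kappa^*$ of $\kappa\cup(c'\cap L)$ and proves a separate claim that its endpoints are at distance $\ge 1$, by analyzing how many crossings $c'\cap L$ can pass over (two is impossible because the opposite curve in $\calC_{1,2}$ would have to join same-colored regions through a single bubble, cf.\ Remark~\ref{rem: c12 passes through top or bottom}) and invoking twist-reducedness in the one-crossing case. None of this is supplied by the tools you cite --- the tautness clauses (6)--(8) of Lemma~\ref{lem: properties of curves} and Observation~\ref{obs: single arc} do not by themselves exclude these borderline curves --- so your plan correctly identifies the sharp cases but leaves their exclusion, which is the actual content of the lemma, unproved; the one sharp case you do close is $\widehat\calK_{3,0}$, where the exclusion of $\calC_{4,0}$ is built into the definition.
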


\begin{proof}
Let $c'\in\calC_{\np}$ and let $\widehat{n}_3,n_4,\tild{n},n_{2,1}$ be as in 
Definition ~\ref{def: distributing}
of $\chi'(c')$. Then we have,
\begin{align}\label{eq: chi' inequality}
\begin{split}
\chi'(c')&=\chi_+(c')+\tfrac{1}{4}\widehat{n}_3 + \tfrac{1}{2}n_4 + 
\tfrac{1}{4}\tild{n}+\tfrac{1}{4}n_{2,1} \\
& \leq (1-\tfrac{1}{4}\bbb{c'}) +\tfrac{1}{4}\widehat{n}_3 + \tfrac{1}{2}n_4 +
\tfrac{1}{4}\tild{n}+\tfrac{1}{4}n_{2,1} \\
& \le 1 
+ \sum_{c'\supset\kappa\in\widehat\calK_{3,0}} (\tfrac{1}{4} - \tfrac{1}{4}\bbb{\kappa})
+\sum_{c'\supset\kappa\in\calK_{4,0}} (\tfrac{1}{2} - \tfrac{1}{4}\bbb{\kappa})\\
&\quad\quad + \sum_{\substack{c'\supset\kappa\in\tild{\calK}}} (\tfrac{1}{4} -
\tfrac{1}{4}\bbb{\kappa})
+ \sum_{c'\supset\kappa\in\calK_{2,1}} (\tfrac{1}{4} - \tfrac{1}{4}\bbb{\kappa})\\
&= 1-\tfrac{1}{2}(\widehat n_3+n_4+\tild{n}+n_{2,1}).
\end{split}
\end{align}

Where the last equality follows since each one of the summands is $-\tfrac{1}{2}$. We divide into 
cases depending on the sum $n=\widehat n_3+n_4+\tild{n}+n_{2,1}$.

\textbf{Case 0. $n=0$.} We have $\chi'(c')=\chi_+(c')$. But since 
$c'\notin \calC_{\pos}$ we have $\chi_+(c')\le 0$ and we are done. (Note that this case includes the case 
$\bbl{c'}=4$ and $n_3=1$. See Remark \ref{rem: special case of redistribution})

\textbf{Case 1. $n=1$.} That is, $c'$ contains a single subarc 
$\kappa\in \widehat \calK_{3,0} \cup \calK_{4,0} \cup \tild\calK \cup \calK_{2,1}$. 
We further divide into sub-cases:

\emph{Case 1.a.} $\kappa\in\widehat\calK_{3,0}$. Clearly $\bbl{c'}\ge \bbl{\kappa}= 3$. By definition 
of $\widehat\calK_{3,0}$, $\bbl{c'}\ne 4$. Then, either $\bbl{c'}\geq6$ or $c'$ has at least two intersection 
points with $K$. Therefore, 
$\bbb{c'}\ge 5$ which gives $$\chi'(c') = \chi_+(c')+\tfrac{1}{4}\le (1-\tfrac{5}{4}) + \tfrac{1}{4} = 0.$$

\emph{Case 1.b.} $\kappa\in\calK_{4,0}$ so $\bbl{\kappa}= 4$. The distance between the 
endpoints of $\kappa$ is 2 in the dual graph, so the closed curve $c'$ that contains 
$\kappa$ must contain two additional bubbles or two additional intersection points. Thus, 
$\bbb{c'}\ge 6$, which gives $$\chi'(c') = \chi_+(c') + \tfrac{1}{2} \le (1-\tfrac{6}{4}) + 
\tfrac{1}{2} = 0 .$$

\emph{Case 1.c.} $\kappa\in\tild{\calK}$. Here again $\bbl{\kappa}=3$.
This happens in one of the cases described in Figure \ref{fig: cases of kappa tilde}. 
The endpoints of $\tild{\kappa}$ are at distance $\ge 2$. Hence, the closed curve $c'$ 
containing $\kappa$ must have two additional bubbles or two additional intersection 
points. It follows that $\bbb{c'}\ge 5$, which gives 
$$\chi'(c') = \chi_+(c') + \tfrac{1}{4} \le (1-\tfrac{5}{4}) + \tfrac{1}{4}= 0 .$$

\emph{Case 1.d.} $\kappa\in\calK_{2,1}$ so  $\bbl{\kappa}=2$ and $\bdr{\kappa}=1$. 
The curve $c'$ must 
contain an additional intersection point.
Let $\alpha$ be the subarc of $c'$ between the two intersection points. Let $\kappa^*$ 
be a small continuation of $\kappa \cup \alpha$ along $c'$.

\begin{claim*}
The endpoints of $\kappa^*$ are at distance greater or equal to 1.
\end{claim*}
\begin{proof}
The arc $\alpha$ cannot contain more than one overpass: Otherwise, $\alpha$ passes over 
two crossings which occur in different twist boxes. This implies that the curve 
$c\in\calC_{1,2}$ opposite to $\kappa$ is such that $c\cap L$ is contained in one of 
the twist boxes, and connects the regions to its left and right, which must have the 
same color. This contradicts the fact that $c\ssm (c\cap L)$ passes through exactly 
one bubble.

If $\alpha$ does not contain an overpass the regions containing the endpoints of 
$\kappa^*$ have different colors, thus the endpoints of $\kappa^*$ are at distance 
greater or equal to 1.

If $\alpha$ contains one overpass then if the endpoints of $\kappa^*$ can be connected by an arc 
containing no intersection points or bubbles then the union of $\kappa^*$ and the arc bound a 
subdiagram of $D(L)$ which contradicts the assumption that $L$ is twist-reduced.
\end{proof}

It follows from the claim that $\bbb{c'}\ge \bbb{\kappa^*}+1=4+1=5$, which gives 
$$\chi'(c') = \chi_+(c') + \tfrac{1}{4} \le (1-\tfrac{5}{4}) + \tfrac{1}{4}= 0.$$

\begin{figure}
    \centering
   \begin{overpic}[height=4cm]{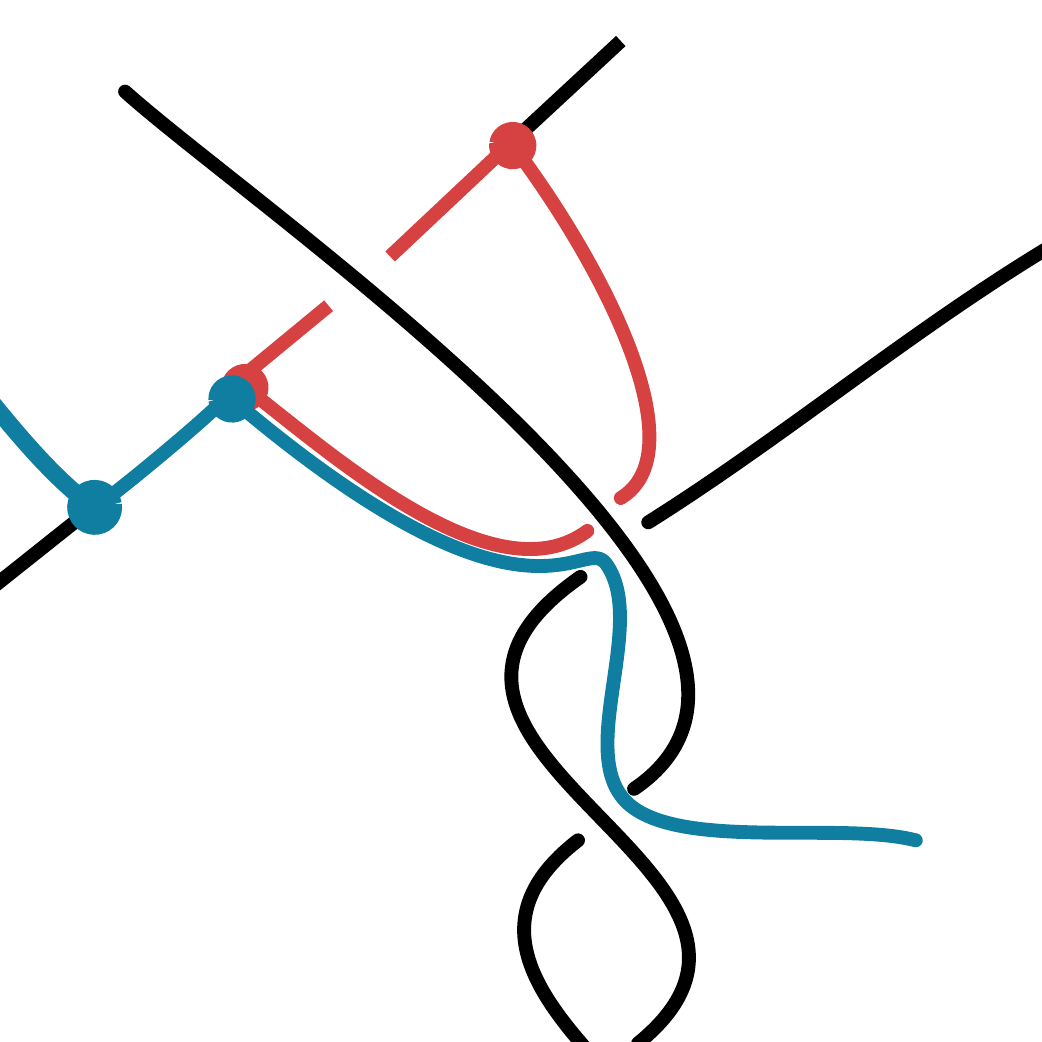}
\put(61,72){$c$}
\put(-9,55){$c'$}
\end{overpic}
    \hskip 2cm
    \begin{overpic}[height=4cm]{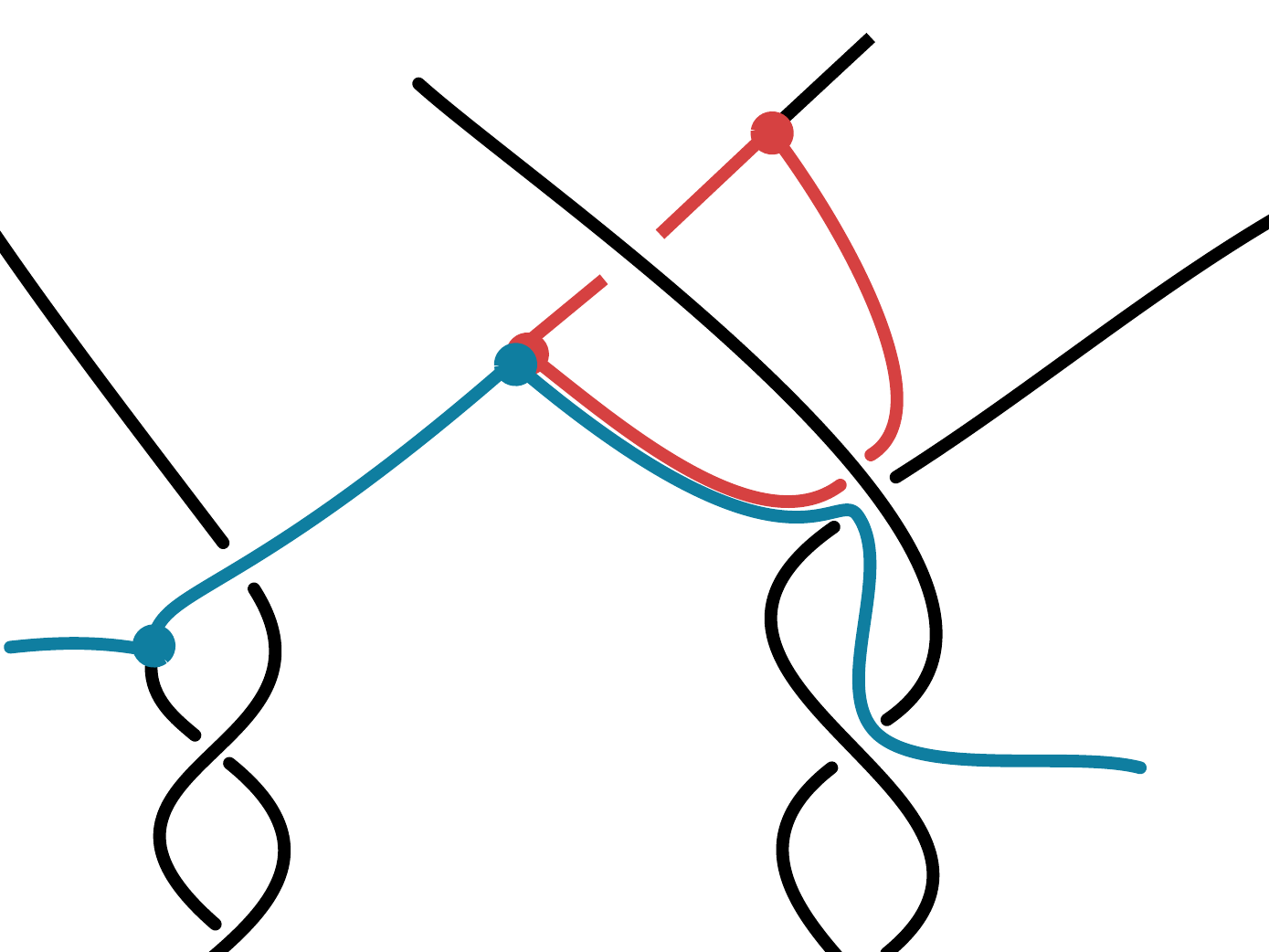}
\put(-6,25){$c'$}
\put(70,55){$c$}
\end{overpic}
    \caption{Two possibilities of a curve containing an arc in $\calK_{2,1}$.}
    \label{fig: K21}
\end{figure}

\vskip7pt

\textbf{Case 2. $n\ge 2$.} In this case we are done by 
inequality \eqref{eq: chi' inequality}.
\end{proof}

\begin{remark}\label{rem: special case of redistribution}
Note that, when 
$n_3=1 \mbox{ and }c'\in \calC_{4,0}$, i.e., when $c'$ contains a subarc in 
$\calK_{3,0} \ssm \widehat\calK_{3,0}$, then $\chi'(c')=\chi_+(c')=0$.
In this case, the positive contribution of the unique $c\in\calC_{2,0}$ opposite $c'$ is counted 
not by $\chi'(c')$ but by $\chi'(\tilde c)$ where $\tilde c$ is the curve opposite to $c'$ 
containing the corresponding arc $\tilde\kappa$. 
\end{remark}

The proof above also gives the following lemma:
\begin{lemma}\label{lem: classification of chi'=0}
 The curve $c'\in\calC_{\np}$ satisfies $\chi'(c')=0$ exactly in the following cases:
 \vskip5pt
\begin{enumerate}\setcounter{enumi}{-1}
    \item \label{lem: chi'=0 case no calK} $c'$ does not contain a subarc in $\calK$ and $\bbb{c'}=4$. 
    (it might have $n_3=1$)
    \vskip5pt
    \item \label{lem: chi'=0 case one arc}$c'$ contains exactly one subarc $\kappa$ of $\calK$ 
    and satisfies either:
      \vskip5pt
    \begin{enumerate} 
        \item \label{lem: chi'=0 case 3,0} $\bbb{c'}=5$ and $\kappa \in \widehat\calK_{3,0}$.
        \vskip5pt
        \item \label{lem: chi'=0 case 4,0} $\bbb{c'}=6$ and $\kappa \in \calK_{4,0}$.
        \vskip5pt
        \item \label{lem: chi'=0 case tilde}$\bbb{c'}=5$ and $\kappa \in \tild{\calK}$.
        \vskip5pt
        \item \label{lem: chi'=0 case 2,1}$\bbb{c'}=5$ and $\kappa \in \calK_{2,1}$.
        \vskip5pt
    \end{enumerate}
    In all cases, $\bbb{c'}=\bbb{\kappa}+2$.
      \vskip5pt
    \item \label{lem: chi'=0 case two arcs} $c'$ is the union of exactly two arcs of $\calK$.\qed
\end{enumerate}
\end{lemma}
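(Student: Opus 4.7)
The plan is to revisit the case analysis carried out in the proof of Lemma~\ref{lem: non-positive contribution of chi'} and track exactly when each inequality there becomes an equality. Since $S$ is taut, the identity $\chi_+(c')=1-\tfrac{1}{4}\bbb{c'}$ holds automatically; consequently, the inequalities that must be tightened are only the lower bounds on $\bbb{c'}$ in terms of the arcs $\kappa\in\calK$ contained in $c'$, and the final rearrangement $\chi'(c')\le 1-\tfrac{1}{2}n$, where $n=\widehat n_3+n_4+\tild n+n_{2,1}$.

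For $n=0$ we simply have $\chi'(c')=\chi_+(c')$, so $\chi'(c')=0$ iff $\bbb{c'}=4$; this yields case~\eqref{lem: chi'=0 case no calK}, and note that $c'$ may still carry an arc in $\calK_{3,0}\ssm\widehat\calK_{3,0}$ since such arcs contribute nothing to $\chi'$ (this is precisely Remark~\ref{rem: special case of redistribution}). For $n=1$, each of the sub-cases~1.a--1.d in the proof of Lemma~\ref{lem: non-positive contribution of chi'} produces a lower bound of the form $\bbb{c'}\ge \bbb{\kappa}+2$, with $\bbb{\kappa}=3,4,3,3$ respectively; equality in $\chi'(c')\le 0$ forces this lower bound to be attained, yielding $\bbb{c'}=5,6,5,5$, which is exactly the content of cases \eqref{lem: chi'=0 case 3,0}--\eqref{lem: chi'=0 case 2,1}. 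The uniform statement $\bbb{c'}=\bbb{\kappa}+2$ is then an immediate corollary.

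The case $n\ge 2$ requires the most care and is where I expect the main difficulty. Inequality~\eqref{eq: chi' inequality} delivers $\chi'(c')\le 1-\tfrac{n}{2}$, so for $\chi'(c')=0$ to hold we must have $n=2$ and, simultaneously, every underlying lower bound in the passage from $\bbb{c'}$ to $\sum_{\kappa\subset c'}\bbb{\kappa}$ must be saturated. Saturation means $\bbb{c'}=\bbb{\kappa_1}+\bbb{\kappa_2}$, so no bubble or intersection point of $c'$ lies outside the two arcs, and $c'$ decomposes exactly as the concatenation of two arcs from $\calK$. The delicate point to verify is that two such arcs can indeed be glued end-to-end into a single closed curve with no redundancy: here one combines Lemma~\ref{lem: Kappaare disjoint} (arcs of $\calK$ overlap neither in a bubble nor in an intersection point) with the endpoint data attached to each $\kappa\in\calK$, to rule out any slack in the match that would contribute extra bubbles or intersection points and strictly decrease $\chi'(c')$ below zero. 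Once this is done, the classification in case~\eqref{lem: chi'=0 case two arcs} is complete and the lemma follows.
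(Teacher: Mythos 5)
Your proposal is correct and follows essentially the same route as the paper, which simply observes that the classification falls out of tracking equality in the case analysis of Lemma~\ref{lem: non-positive contribution of chi'} (with the $n=0$ caveat handled by Remark~\ref{rem: special case of redistribution}); your extra worry in the $n\ge 2$ case about gluing two arcs is unnecessary, since saturation of $\bbb{c'}=\bbb{\kappa_1}+\bbb{\kappa_2}$ already gives the stated structural conclusion, but it does no harm.
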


As an immediate corollary to Lemma \ref{lem: non-positive contribution of chi'}  we get the following:
\begin{corollary}\label{cor: nonsplit}
The link $L$ is non-split.
\end{corollary}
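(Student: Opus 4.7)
The plan is to argue by contradiction. Suppose that $L$ is split, so by definition there exists a $2$-sphere $S$ properly embedded in $S^3 \ssm \NN(L)$ which does not bound a $3$-ball in $S^3\ssm L$; that is, $S$ is an essential $2$-sphere.

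First I would isotope $S$ into normal position via Lemma \ref{lem: normal form}. The hypothesis that $S$ has no meridional boundary components is vacuous because $\partial S = \emptyset$. Next, I would replace $S$ by an essential $2$-sphere minimizing the complexity $\mathrm{Com}(\cdot)$ from Definition \ref{def: complexity} (this minimum is attained since complexity takes values in $\bbZ_{\ge 0}^3$ with the lexicographic order). This puts us in case (i) of Lemma \ref{lem: properties of curves}, so the minimizer, still denoted $S$, is taut in the sense of Remark \ref{rem: Assume taut}.

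Now Lemma \ref{lem: summing chi' gives Euler char} applies and yields
\[
\chi(S) \;=\; \sum_{c' \in \calC_{\np}} \chi'(c'),
\]
while Lemma \ref{lem: non-positive contribution of chi'} asserts that each summand on the right is non-positive. Combining these gives
\[
2 \;=\; \chi(S) \;=\; \sum_{c' \in \calC_{\np}} \chi'(c') \;\le\; 0,
\]
which is the desired contradiction. (Note that the argument covers also the degenerate case $\calC = \emptyset$, for then $S \subset H^+$ or $S \subset H^-$, and since $H^\pm$ is a $3$-ball in $S^3 \ssm L$, the sphere $S$ bounds a ball there, contradicting essentiality; equivalently, in that case the above sum is the empty sum $0 \ne 2$.)

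There is essentially no obstacle here: the entire machinery of Sections 3 and 4 was built precisely so that Euler characteristic considerations rule out essential spheres. The only point requiring care is to confirm that the complexity-minimization is among \emph{essential} $2$-spheres (as mandated by case (i) of Lemma \ref{lem: properties of curves}), which is why we appeal to the existence of \emph{some} essential sphere coming from the split hypothesis rather than minimizing in an isotopy class.
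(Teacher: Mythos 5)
Your proposal is correct and follows essentially the same route as the paper: take an essential sphere of minimal complexity, invoke Lemma \ref{lem: properties of curves}(i) to get tautness, and combine Lemma \ref{lem: summing chi' gives Euler char} with Lemma \ref{lem: non-positive contribution of chi'} to contradict $\chi(S)=2$. The extra remarks about normal position and the case $\calC=\emptyset$ are harmless elaborations of the same argument.
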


\begin{proof}
Assume for contradiction that $S^3\ssm L$ contains an essential sphere. Let $S$ be an essential sphere with 
least complexity among all essential spheres. By Lemma \ref{lem: properties of curves}, $S$ is taut. 
By Lemma~\ref{lem: summing chi' gives Euler char}, $\chi(S) = \sum_{c'\in\calC_{\np}} \chi'(c').$ Thus it 
follows from Lemma~\ref{lem: non-positive contribution of chi'} that $\chi(S) \leq 0$, which is a
contradiction.
\end{proof}

\vskip15pt
\section{Analysing the curves}\label{sec: Analysing the curves}

As stated in the introduction our goal is to prove the following:

\begin{theorem}\label{thm: boundary parallel}
There are no essential tori or annuli in $ S^3 \ssm \NN(L)$. 
\end{theorem}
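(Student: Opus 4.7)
The plan is to proceed by contradiction: suppose $S \subset S^3 \ssm \NN(L)$ is an essential torus or annulus, so $\chi(S)=0$. Since Corollary~\ref{cor: nonsplit} shows $L$ is non-split, Lemma~\ref{lem: normal form} puts $S$ into normal position, and then minimizing the lexicographic complexity of Definition~\ref{def: complexity} in its isotopy class together with Lemma~\ref{lem: properties of curves} and Remark~\ref{rem: Assume taut} lets me assume that $S$ is taut. Because $\chi(S)=0$, Lemmas~\ref{lem: summing chi' gives Euler char} and~\ref{lem: non-positive contribution of chi'} give
\[
0 \;=\; \chi(S) \;=\; \sum_{c'\in\calC_{\np}} \chi'(c') \;\leq\; 0,
\]
so $\chi'(c')=0$ for \emph{every} $c'\in\calC_{\np}$. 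This is the central hook of the argument: the Euler characteristic constraint collapses all of the curves into a rigid equality case.

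The next step is to translate this rigidity into local combinatorial data. Lemma~\ref{lem: classification of chi'=0} lists the possible $\chi'=0$ types, and reading the proof of Lemma~\ref{lem: non-positive contribution of chi'} shows that for each such $c'$ the lower bounds used for $\bbb{c'}$ are actually equalities and no $c\in\calC_{\pos}$ can be ``unaccompanied''. Using the Type~0/1/2 classification of $S\cap T$ from Lemma~\ref{lem: normal form}, I will enumerate, for each twist region $T$ of the $3$-highly twisted plat, the possible configurations of arcs $c\cap T$ and $c'\cap T$ consistent with this rigidity. Each configuration completely prescribes which curves of $\calC^{\pm}$ enter and leave $T$, together with their bubble/intersection counts, and in particular tells me how each arc of $\calK$ inside $T$ must be matched to an opposite arc of $\calC_{\pos}$.

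The main obstacle, and the final step, is to show that no globally consistent choice of local patterns exists. Since the plat has at least three twist regions and the underlying braid structure dictates how twist boxes are adjacent, the rigid local patterns at one twist box must match those at its neighbors. Because the plat is $3$-highly twisted, prime and twist-reduced (Remark~\ref{rem: Plats are twist reduced}), I expect the propagation to collide with one of the following obstructions: it forces a curve violating one of the taut conditions of Lemma~\ref{lem: properties of curves} (typically parts (5), (6), (7), or (8)); it forces two bridges to coincide in a way forbidden by Observation~\ref{obs: single arc}; or it forces a subarc of some $c$ together with an arc on $P\ssm L$ to bound a subdiagram that contradicts twist-reducedness. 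The case analysis will split into curves supported in the interior of the template and curves that traverse the corner bigons where the $\tild\calK$ arcs live, and the hypothesis of at least three twist regions will be used to guarantee that a curve must cross a ``middle'' twist box where the rigid pattern has no available extension. Any such contradiction rules out the existence of the taut essential $S$, completing the proof.
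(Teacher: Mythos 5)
Your setup matches the paper exactly: taut $S$, $\chi(S)=0$, hence $\chi'(c')=0$ for every $c'\in\calC_{\np}$, and then an analysis of the equality cases of Lemma~\ref{lem: classification of chi'=0}. But your endgame --- ``show that no globally consistent choice of local patterns exists'' and thereby contradict the existence of a taut essential $S$ --- is not what happens, and for tori it cannot happen. The $\chi'=0$ configurations \emph{are} globally realizable: a boundary-parallel torus around a component of $L$ is taut and meets $P^\pm$ precisely in curves of the shapes the rigidity allows (those of Figures~\ref{fig:C6a}, \ref{fig:C6} and \ref{fig: C4}, which simply follow the link past bridges and crossings). So the combinatorial propagation does not collide with tautness, Observation~\ref{obs: single arc}, or twist-reducedness; instead the correct conclusion of the case analysis (Lemmas~\ref{lem: no tilde}--\ref{lem: c in C4} together with Lemma~\ref{lem: desired implies prallel}) is that the allowed curve shapes force $S$ to be \emph{boundary parallel}, contradicting essentiality rather than existence. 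Your plan, as stated, would be trying to prove something false.

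The second, larger omission is the annulus case. An essential annulus has boundary on $L$, so it meets some twist box in a Type~(1) disk and produces curves in $\calC_{0,4}$ --- curves with $\chi'=0$, no $\calK$-arcs, and no combinatorial obstruction; your local-pattern scheme says nothing about them. The paper's Proposition~\ref{pro: unannular} handles this with a genuinely topological argument, not a propagation argument: it locates a curve of $\calC_{0,4}$ through the ``middle'' of a twist box, splits into cases by how many crossings the second bridge passes, and in the key sub-case builds the disk $\Delta$ and the torus $U=\partial(\NN(S)\cup\NN(l_1))$, proves $U$ incompressible on both sides, invokes the already-established atoroidality (Corollary~\ref{cor: atoroidal}) to force $U$ to be peripheral, and then derives a contradiction via a lens-space/boundary-compression argument (or reruns the argument on an annulus $A$ between two components). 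Note in particular that atoroidality is an \emph{input} to unannularity, so tori and annuli cannot be treated uniformly as your proposal does. To repair the proposal you would need (i) to replace the ``global inconsistency'' goal for tori by ``the allowed shapes imply $S$ is boundary parallel,'' and (ii) to add the separate boundary analysis for annuli along the lines above.
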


In what follows, \textbf{assume that $S$ is a taut essential torus or annulus.}
By Lemma~\ref{lem: summing chi' gives Euler char},  
$\sum_{c'\in\calC_{\np}} \chi'(c')=\chi(S) = 0$. By Lemma \ref{lem: non-positive contribution of chi'} 
each summand $\chi'(c')\le 0$, and thus must be  equal to $0$. 
It follows that all the curves 
$c'\in\calC_{\np}$ are as classified in Lemma \ref{lem: classification of chi'=0}. 

The proof of Theorem \ref{thm: boundary parallel} will proceed by analysing each case of 
Lemma \ref{lem: classification of chi'=0} separately, and showing that $S$ must be boundary parallel.
Before we give the proof we first need to define some notation which will be used below.

\begin{definition}
The projection of a twist region in the diagram is a rectangle called a twist box, we 
refer to its edges as top/bottom/left/right as obvious from Figure \ref{fig: turn and cross}.
\end{definition}

\begin{figure}[ht]
    \centering
   \begin{overpic}[height=5cm]{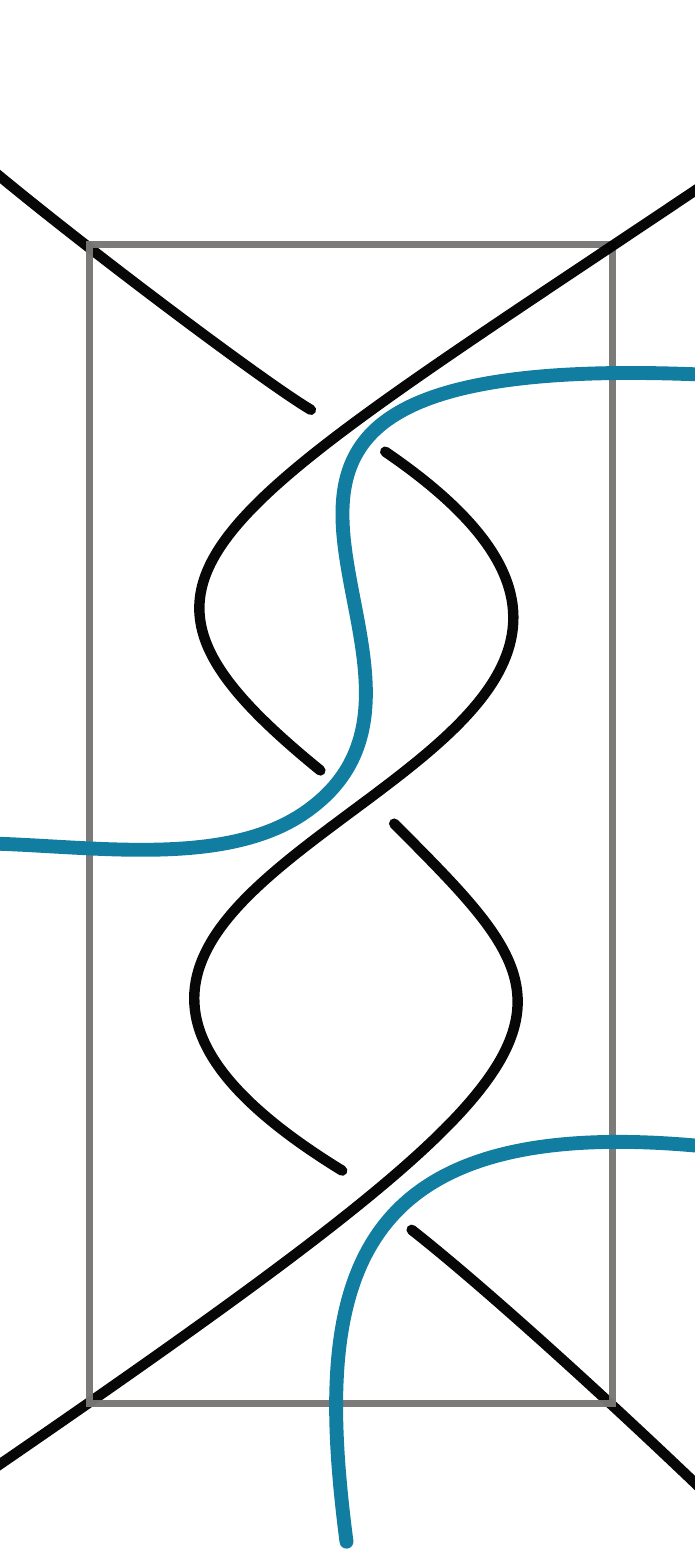}
\put(16,90){top}
\put(12,-3){bottom}
\put(-10,55){left}
\put(44,55){right}
\put(48,76){cross}
\put(48,22){turn}
\end{overpic}
    \caption{A crossing and turning arc in a twist box}
    \label{fig: turn and cross}
\end{figure}

\begin{lemma}
\label{lem: no tilde}
There are no arcs in $\calK_{3,0} \ssm \widehat\calK_{3,0}$. In particular $\tild\calK=\emptyset$.

\end{lemma}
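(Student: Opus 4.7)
The plan is to assume for contradiction that there is an arc $\kappa'\in\calK_{3,0}\setminus\widehat\calK_{3,0}$ and derive a contradiction using the geometry at the corner together with the hypothesis that $S$ is a taut essential torus or annulus. By definition of $\widehat\calK_{3,0}$, the closed curve $c'$ containing $\kappa'$ lies in $\calC_{4,0}$ (so $\bbl{c'}=4$ and $\bdr{c'}=0$) and is opposite to some $c\in\calC_{2,0}$ of type (b). The construction preceding the definition of $\tild{\calK}$ then produces a distinguished arc $\tild{\kappa}\in\tild{\calK}$ opposite to $c'$ with $\bbl{\tild{\kappa}}=3$, and this configuration is localised in a corner of the plat (as in Figure~\ref{fig: cases of kappa tilde}). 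The ``in particular'' clause will then follow from the bijection $\calK_{3,0}\setminus\widehat\calK_{3,0}\leftrightarrow\tild{\calK}$ noted in the proof of Lemma~\ref{lem: summing chi' gives Euler char}.

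Since $S$ is assumed to be a taut essential torus or annulus, Lemmas~\ref{lem: summing chi' gives Euler char} and~\ref{lem: non-positive contribution of chi'} give $\chi(S)=0$ and $\chi'(c'')=0$ for every $c''\in\calC_{\np}$. Let $\tilde c$ be the closed curve containing $\tild{\kappa}$; since $\bbl{\tilde c}\ge 3$, we have $\tilde c\in\calC_{\np}$. Because $\tilde c$ contains the subarc $\tild{\kappa}\in\calK$, Lemma~\ref{lem: classification of chi'=0} forces $\tilde c$ into either case~(1c), so $\bbb{\tilde c}=5$ and $\tild{\kappa}$ is the unique arc of $\calK$ on $\tilde c$, or case~(2), so $\tilde c$ is the union of exactly two arcs of $\calK$. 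In particular, beyond $\tild{\kappa}$ the curve $\tilde c$ can acquire at most two further bubbles/intersection points, unless its complementary portion is itself an arc of $\calK$.

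The core of the proof is a geometric case analysis at the corner. For each subcase of Figure~\ref{fig: cases of kappa tilde}, I would trace the continuations of $\tilde c$ emerging from the two endpoints of $\tild{\kappa}$, using that the plat is $3$-highly twisted (so every adjacent twist box contains at least three crossings) and that the diagram is prime and twist-reduced (Remark~\ref{rem: Plats are twist reduced}), together with the tautness conditions of Lemma~\ref{lem: properties of curves} -- particularly item~(5) that $\tilde c$ cannot pass twice through the same bubble on the same side of $L$, and items~(6)--(8) that rule out the indicated boundary/bubble-reducing arcs. In each subcase I expect to show that any valid completion of $\tilde c$ to a closed curve forces strictly more than two further bubbles or intersection points, contradicting $\bbb{\tilde c}=5$ in case~(1c); and that the corner is too cramped to accommodate a second arc of $\calK$ disjoint from $\tild{\kappa}$, ruling out case~(2).

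The main obstacle is the branching of this case analysis: each endpoint of $\tild{\kappa}$ admits several possible continuations -- turning within the adjacent twist box, crossing through one, or exiting into the neighbouring bigon or twist box -- and each branch must be handled, keeping careful track of which side of $L$ each bubble is traversed from and of which region of the template the arc enters. The $3$-highly-twisted hypothesis will be used repeatedly to guarantee that an arc forced to traverse a twist box accumulates at least one unavoidable extra bubble, so that the total bubble/intersection count overshoots the bound imposed by $\chi'(\tilde c)=0$.
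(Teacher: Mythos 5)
Your overall framework is exactly the paper's: assume $\kappa'\in\calK_{3,0}\ssm\widehat\calK_{3,0}$ exists, pass to the distinguished corner arc $\tild\kappa\in\tild\calK$ and the closed curve $\tild c\supset\tild\kappa$, and use $\chi'(\tild c)=0$ together with Lemma~\ref{lem: classification of chi'=0} to force $\tild c$ into case (1c) ($\bbb{\tild c}=5$) or case (2) (a union of two arcs of $\calK$). That reduction is correct. The problem is that everything after it --- the actual exclusion of cases (1c) and (2) in each of the four corner configurations of Figure~\ref{fig: cases of kappa tilde} --- is only announced (``I would trace\dots'', ``I expect to show\dots''), and this case analysis is the entire mathematical content of the lemma, not a routine verification. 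As it stands the proposal is a correct plan, not a proof.

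Moreover, the mechanism you anticipate (the $3$-highly-twisted hypothesis forcing an arc that traverses a twist box to pick up extra bubbles, overshooting $\bbb{\tild c}=5$) is not how most of the subcases actually die, so the promised branches would need different arguments than the one you sketch. In case (1c), parity and tautness (items (2) and (4) of Lemma~\ref{lem: properties of curves}) pin down the complementary arc $\beta=\tild c\ssm\tild\kappa$ to have exactly two intersection points and \emph{no} bubbles; configurations (a) and (c) are then killed because $\beta$ would have to run along a subarc of $L\cap P^+$ with no underpass connecting the relevant corner regions, and no such bridge exists in the template, while in (b) and (d) the arc $\tild\kappa$ terminates inside a twist box, so any continuation must immediately pass through the adjacent bubble, contradicting that $\beta$ has none. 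In case (2), the complement cannot lie in $\calK_{4,0}$ or $\calK_{2,1}$ because $\tild c$ would then be in $\calC_{7,0}$ or $\calC_{5,1}$, which are empty by tautness; a complement in $\tild\calK$ forces $\tild c$ to pass through the same bubble twice on the same side of $L$ (violating item (5)), and a complement in $\calK_{3,0}$ is excluded by the uniqueness of the bridge of $L\cap P^+$ adjacent to such an arc (Observation~\ref{obs: single arc}), which is incompatible with the corner picture. Your proposal would be complete only once these (or equivalent) concrete arguments are supplied for every branch; ``the corner is too cramped'' is not yet an argument for case (2).
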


\begin{proof}
Assume in contradiction that there exists an arc $\kappa' \in \calK_{3,0} \ssm \widehat\calK_{3,0}$. 
By assumption, the curve $c'$ containing $\kappa'$ is in $\calC_{4,0}$. Let $c\in\calC_{2,0}$ be the 
curve opposite to $\kappa'$, let $\tild{\kappa}$ be the corresponding arc in $\tild\calK$ opposite 
to $c'$, and let $\tild c$ be the curve containing $\tild\kappa$. Without loss of generality, we may 
assume that $c\subset P^+$. This implies that $\kappa',c' \subset P^-$ and $\tild \kappa, \tild c\subset P^+$.
The possible configurations of $c,c',\kappa',\tild{\kappa}$ are depicted in Figure \ref{fig: cases of 
kappa tilde}. Note that under the assumptions above, the sign of the twists in the figure must 
be as depicted.

\begin{figure}[ht]
    \centering
    \subfigure[]{
   \begin{overpic}[height=4cm]{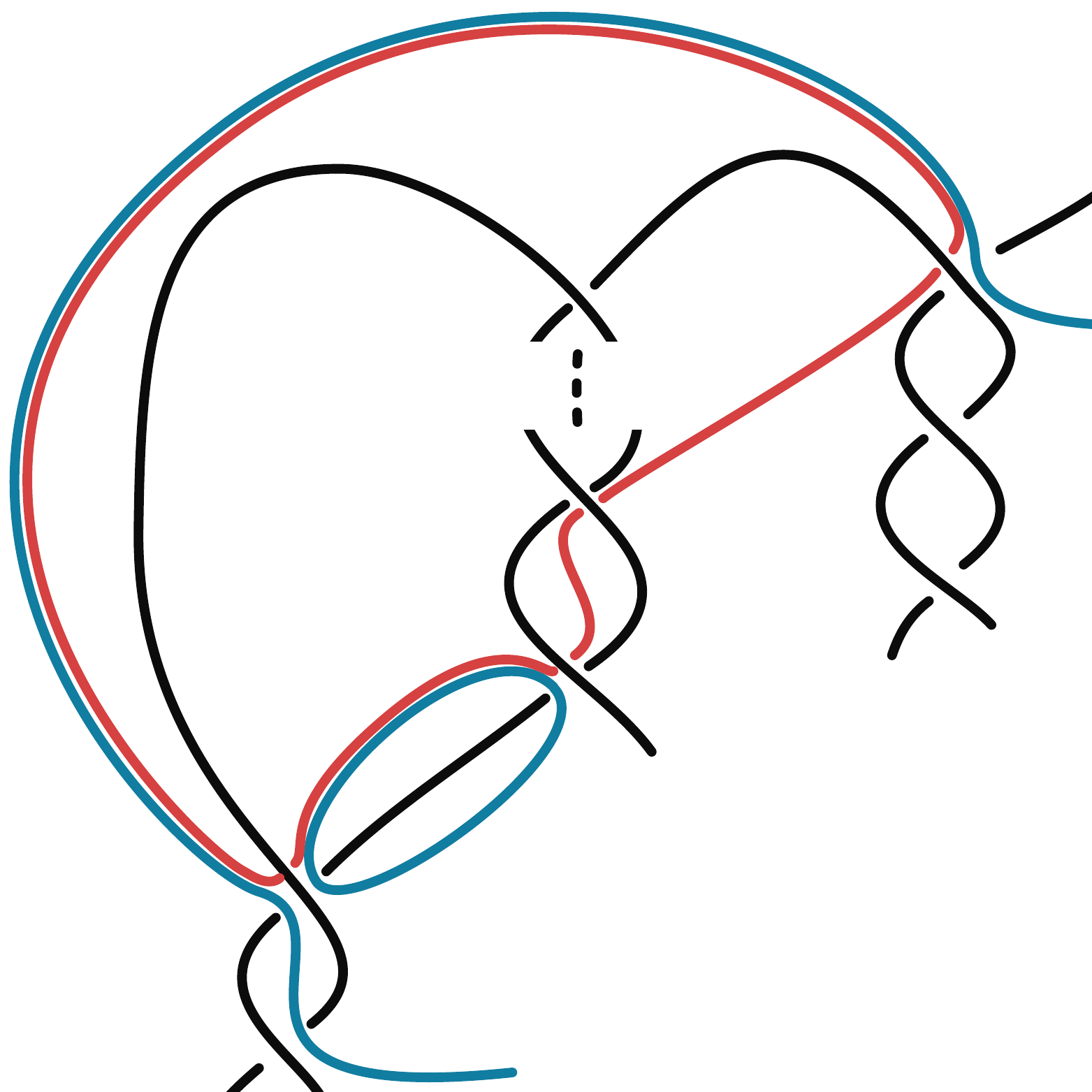}
\put(40,17){$c$}
\put(102,70){$\tilde\kappa$}
\put(43,88){$c'$}
\end{overpic}}
    \hskip 1cm
   \subfigure[]{
   \begin{overpic}[height=4cm]{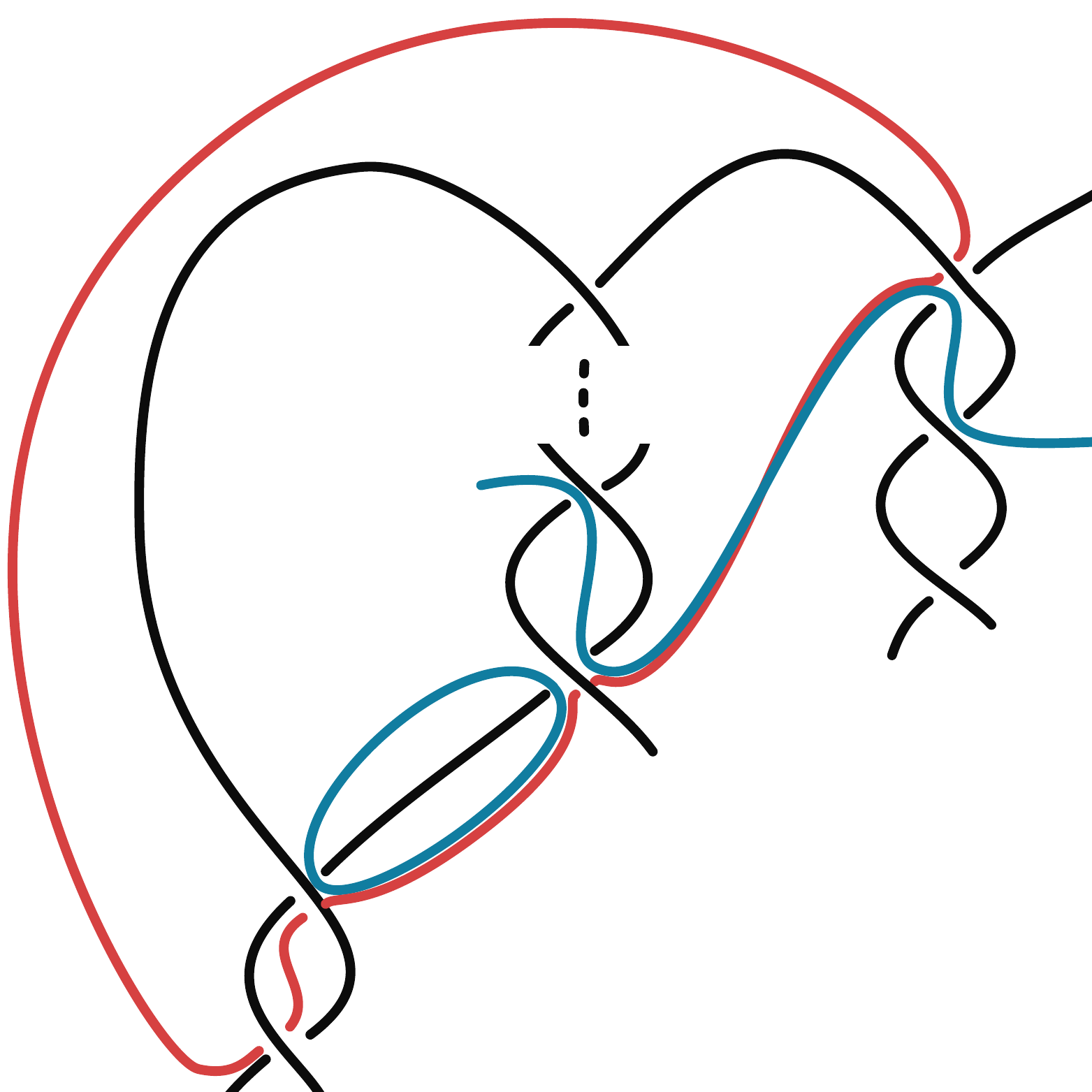}
\put(40,17){$c$}
\put(102,58){$\tilde\kappa$}
\put(43,88){$c'$}
\end{overpic}}\\

\subfigure[]{
   \begin{overpic}[height=4cm]{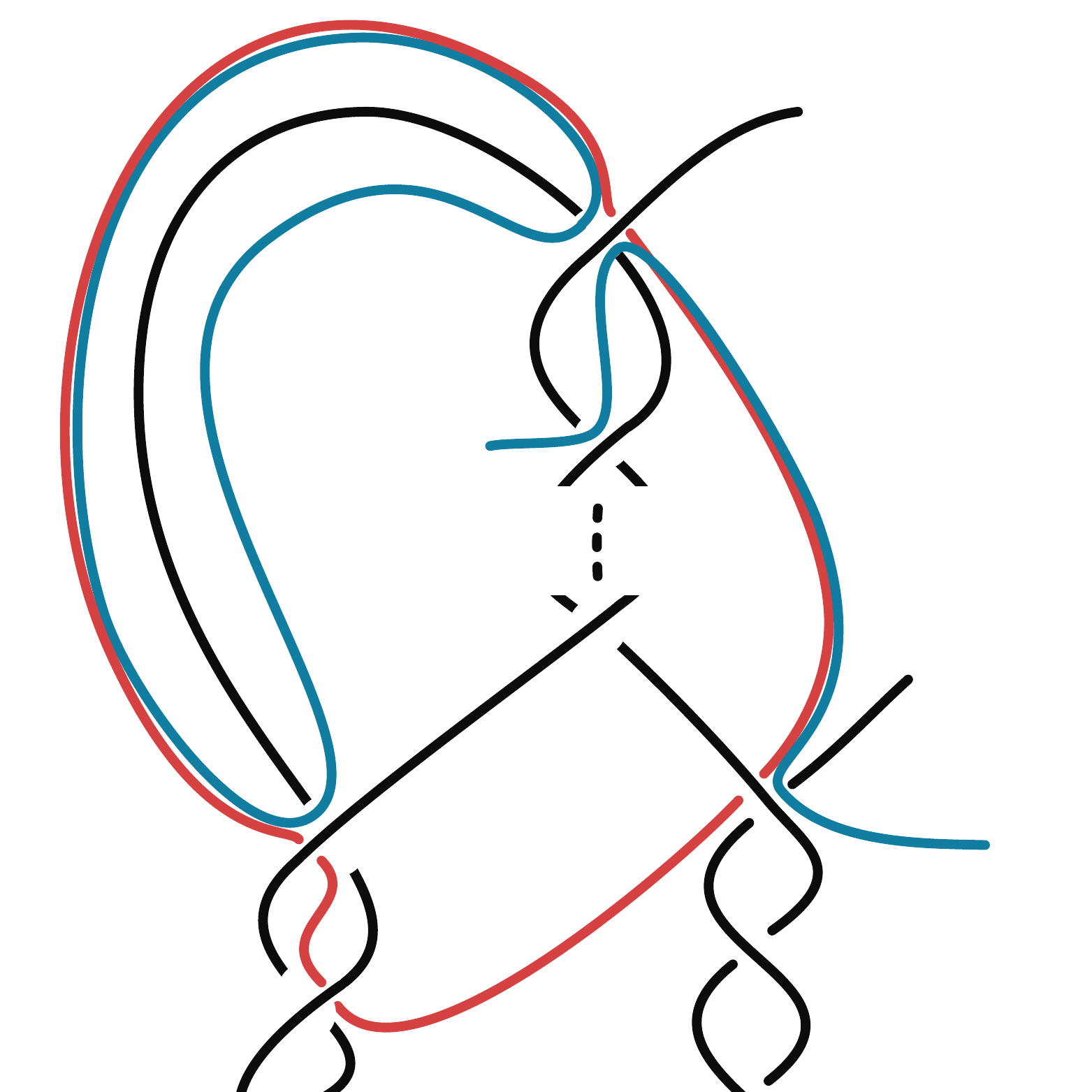}
\put(23,55){$c$}
\put(92,18){$\tilde\kappa$}
\put(50,5){$c'$}
\end{overpic}}
\hskip 1cm
  \subfigure[]{
   \begin{overpic}[height=4cm]{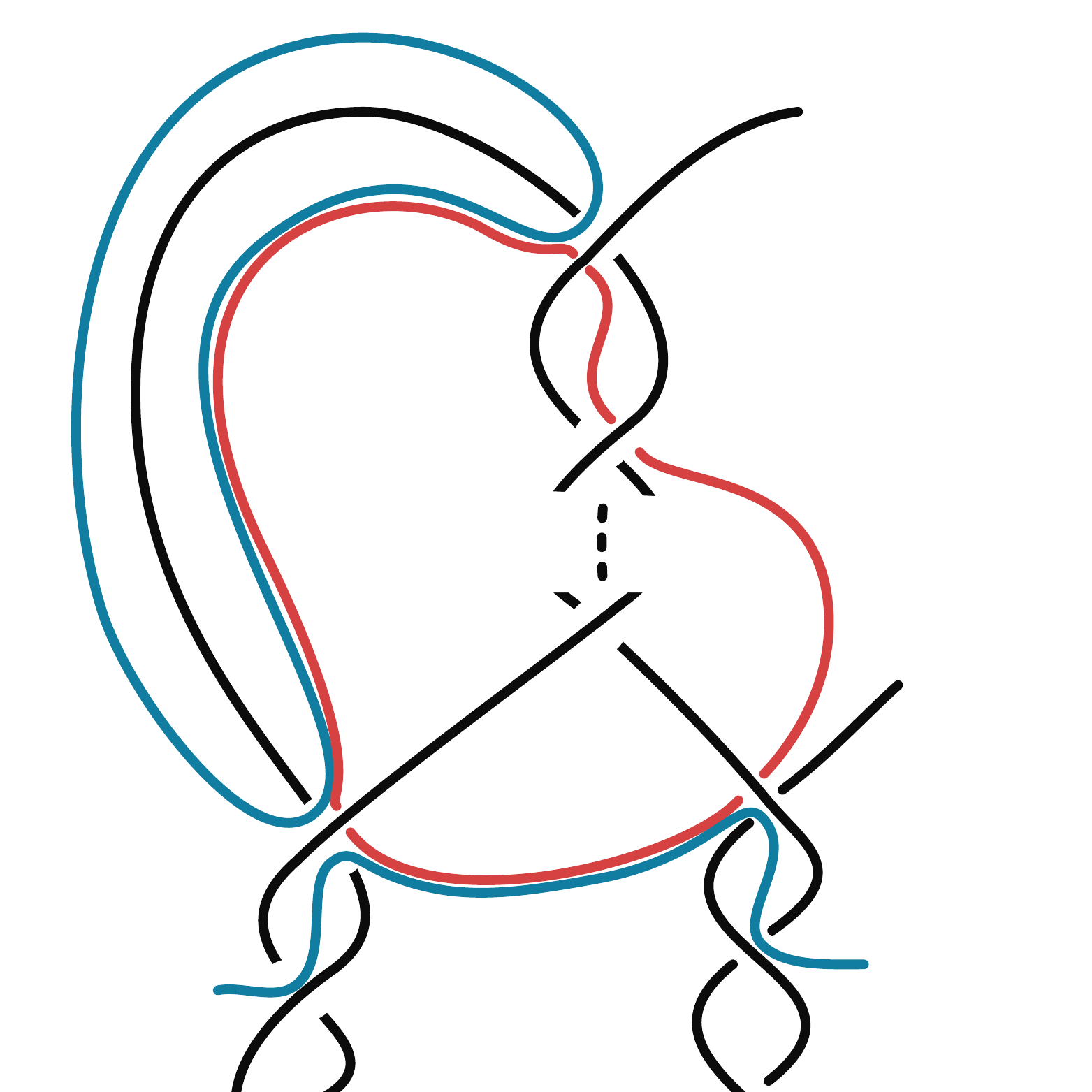}
\put(3,80){$c$}
\put(82,10){$\tilde\kappa$}
\put(79,45){$c'$}
\end{overpic}}
    \caption{The possible cases of $\tild{\kappa} \in \tild{\calK}$.}  
    \label{fig: cases of kappa tilde}
\end{figure}

The curve $\tild c$ must satisfy one of the cases of Lemma \ref{lem: classification of chi'=0}. 
Note that  only Cases (\ref{lem: chi'=0 case tilde}) and (\ref{lem: chi'=0 case two arcs}) 
are applicable.

\vskip7pt

\emph{Case (\ref{lem: chi'=0 case tilde}).} 
In this case $\bbl{\tild c} = 3 $ and $\bdr{\tild c}=2$.
The complementary subarc $\beta=\tilde c \ssm \tild \kappa$ must have two intersection 
points and no bubbles. We rule out each of the cases.
In Case (a) of Figure \ref{fig: cases of kappa tilde}, the arc $\beta$ meets $L$ and 
travels along it in a single arc in $P^+$ which cannot go 
through an underpass. As can be seen from Figure \ref{fig: cases of kappa tilde}, no such arc exists.
In Case (b), the arc $\tild \kappa$ ends in a twist box, and so the complementary arc 
$\beta$ must pass through a bubble. Cases (c) and (d) are 
ruled out similarly to Cases (a) and (b) respectively.

\vskip7pt

\emph{Case (\ref{lem: chi'=0 case two arcs}).} In this case the complementary arc 
$\beta$ is in $\calK$.  The curve $\kappa$ cannot be in $\calK_{4,0}$ or $\calK_{2,1}$ 
as otherwise the curve $c$ would be in $\calC_{7,0}$ or $\calC_{5,1}$. However these sets are empty by 
Lemma \ref{lem: properties of curves}.

\vskip7pt

In Case (a), the endpoints of $\beta$ are the same as those of $\tild\kappa$. If $\beta\in\tild\calK$ 
then it is one of the arcs depicted in Figure \ref{fig: cases of kappa tilde}. Hence, it must also be 
as in Case (a) and must run parallel to $\tild \kappa$. But this would imply that $\tild c$ passes 
through the same bubble twice on the 
same side of $L$, in contradiction to tautness of $S$. If $\beta \in \calK_{3,0}$, then there 
is a corresponding subarc $\alpha$ in $L\cap P^+$ depicted in Figure \ref{fig: subarc parallel 
to kappa'}  that connects the same regions as $\beta$. As Figure \ref{fig: cases of kappa tilde} 
is a precise depiction of the possibilities, one can readily check that there is no such arc.

\begin{figure}[ht]
    \centering
   \begin{overpic}[height=3.5cm]{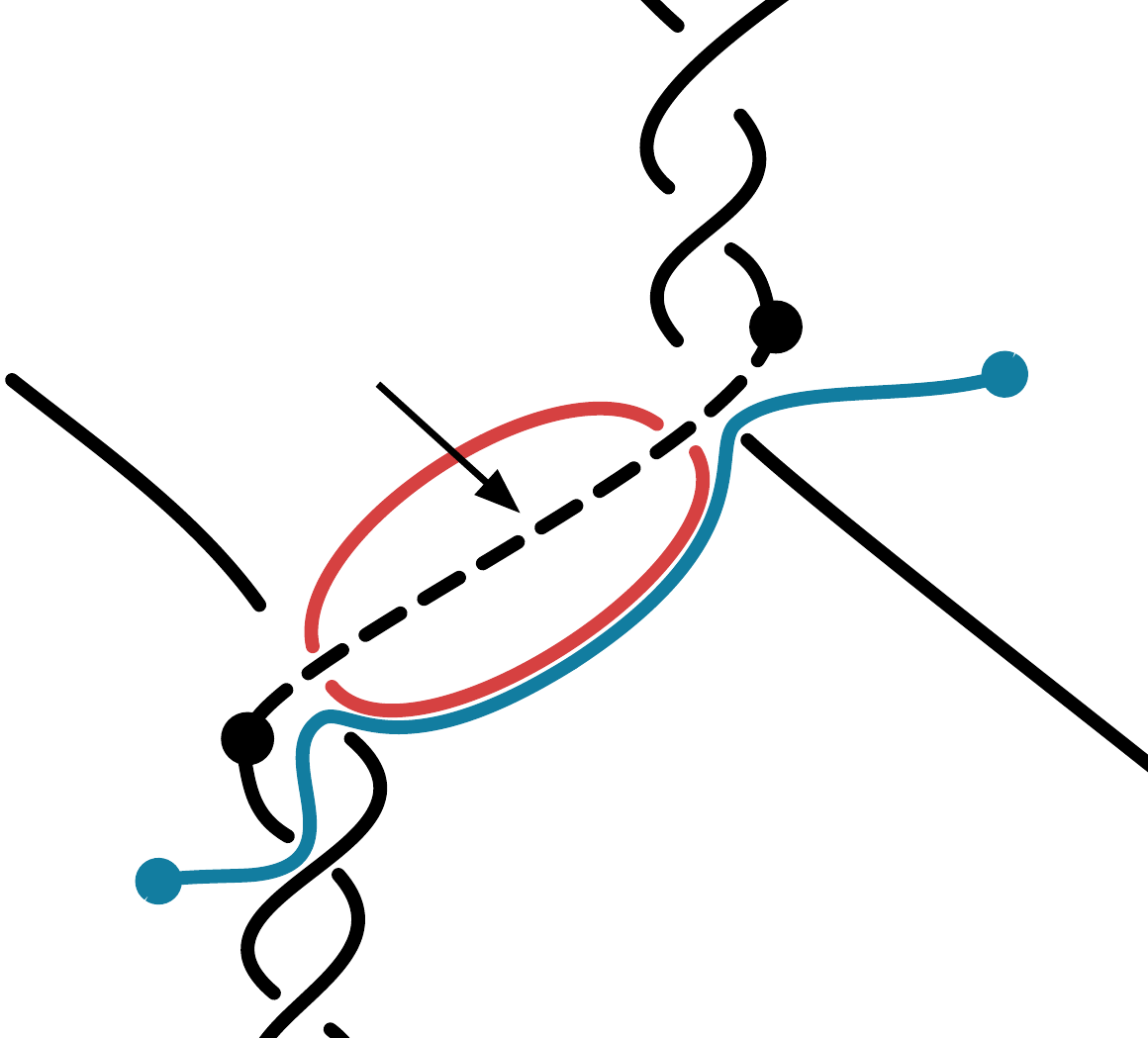}
\put(75,48){$\kappa$}
\put(92,58){$b$}
\put(5,15){$a$}
\put(25,56){$\alpha$}
\end{overpic}
    \caption{The arc $\alpha$ parallel to $\kappa\in\calK_{3,0}$}
    \label{fig: subarc parallel to kappa'}
\end{figure}

\vskip7pt

In Case (b), 
$\tild\kappa$ ends in the ``middle'' of a twist box after passing through its top bubble. The arc 
$\beta = \tild c \ssm \tild \kappa$ continuing $\tild{\kappa}$ must pass through the bubble 
immediately below.  The arc of $\beta$ intersecting this bubble cannot be contained in any 
arc in $\calK$. Hence, this case is impossible.
This finishes the proof of the claim.
\end{proof}

\begin{lemma}\label{lem: kappa of K_{4,0}}
If $c'$ contains a subarc $\kappa$ of $\calK_{4,0}$, then 
it is of the form shown in Figure~\ref{fig:C6a}.
\end{lemma}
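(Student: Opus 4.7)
The plan is to leverage the vanishing $\chi'(c')=0$ to place $c'$ into the very restrictive case~(\ref{lem: chi'=0 case 4,0}) of Lemma~\ref{lem: classification of chi'=0}, and then enumerate the possibilities for the complementary arc $\beta = c' \ssm \kappa$. Since $S$ is a taut essential torus or annulus,
\[
0=\chi(S)=\sum_{c'\in\calC_{\np}}\chi'(c'),
\]
and every summand is non-positive by Lemma~\ref{lem: non-positive contribution of chi'}, so each term vanishes. In particular $\chi'(c')=0$, and because $c'$ contains a subarc $\kappa\in\calK_{4,0}$, Lemma~\ref{lem: classification of chi'=0}(\ref{lem: chi'=0 case 4,0}) forces $\bbb{c'}=6$, whence $\bbb{\beta}=2$.

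First I would unpack the rigid local form of $\kappa$. By definition $\kappa$ is opposite a curve $c\in \calC_{2,0}$ of the type in Figure~\ref{fig:c2}(a); its four bubbles sit in two adjacent twist boxes, and its endpoints lie in regions of $P\ssm\mathcal{T}$ at combinatorial distance $2$ in the sense of Definition~\ref{def: Distance}. These endpoint positions are the only places from which $\beta$ can emerge, and they severely constrain how $\beta$ can move in the template.

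By the parity statement of Lemma~\ref{lem: properties of curves}(2), $\bdr{c'}$ is even, and since $\bdr{\kappa}=0$, we have $(\bbl{\beta},\bdr{\beta})\in\{(2,0),(0,2)\}$. In the subcase $(0,2)$, $\beta$ travels between the endpoints of $\kappa$ through exactly two intersection points with $L$ and no bubbles; the closed curve $c'=\kappa\cup\beta$ then meets $L$ in only two transverse points while encircling four bubble crossings of $L$. I would eliminate this using the prime and twist-reduced hypotheses (Definition~\ref{def: twist reduced}, Remark~\ref{rem: Plats are twist reduced}) applied to the subdiagram bounded by $c'$, together with the uniqueness of multi-bubble bridges in Observation~\ref{obs: single arc}. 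In the remaining subcase $(2,0)$, $\beta$ threads through exactly two further bubbles without meeting $L$; I would enumerate the possible pairs of bubbles and eliminate all but one using parts (5), (6), (7) of Lemma~\ref{lem: properties of curves} (no repetition of a bubble on the same side of $L$, and no arc reducible by the isotopies of Figures~\ref{reducing bubbles a}--\ref{reducing bubbles b}), together with the color-parity statement in part (4). The surviving configuration is precisely the one drawn in Figure~\ref{fig:C6a}.

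The main obstacle will be the enumeration in subcase $(2,0)$. The $3$-highly twisted hypothesis does the heavy lifting: it forbids $\beta$ from reaching any other bubble within the two twist boxes already containing $\kappa$'s four bubbles, so $\beta$ must exit each such twist box through a prescribed side. A secondary subtlety in subcase $(0,2)$ is to check that the two intersection points of $\beta$ with $L$ cannot be removed by a local isotopy reducing complexity, which is exactly where the twist-reduced condition (rather than mere primality) is needed; once this is verified the contradiction follows cleanly, leaving the configuration of Figure~\ref{fig:C6a} as the unique possibility.
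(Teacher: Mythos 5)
Your opening inference is where the argument breaks: from $\chi'(c')=0$ and the presence of a subarc $\kappa\in\calK_{4,0}$ you conclude that you are in case~(\ref{lem: chi'=0 case 4,0}) of Lemma~\ref{lem: classification of chi'=0}, hence $\bbb{c'}=6$ and $\bbb{\beta}=2$. But that lemma has a further case, (\ref{lem: chi'=0 case two arcs}): $c'$ could be the union of \emph{two} arcs of $\calK$, and in particular of two arcs of $\calK_{4,0}$, giving $\bbb{c'}=8$ with $\chi'(c')=0$ still holding. (The other mixed unions, e.g.\ with an arc of $\widehat\calK_{3,0}$ or $\calK_{2,1}$, are excluded by the parity conditions of Lemma~\ref{lem: properties of curves}, but two $\calK_{4,0}$ arcs are not.) Your case split $(\bbl{\beta},\bdr{\beta})\in\{(2,0),(0,2)\}$ therefore misses a configuration that is not of the form in Figure~\ref{fig:C6a}, and the lemma is exactly about excluding such configurations. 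The paper handles this head-on: it first shows the endpoints $a,b$ of $\kappa$ lie in regions at distance exactly $2$ (even by color, nonzero by twist-reducedness), so by Observation~\ref{obs: single arc} the bridge $\alpha$ over two crossings connecting those regions is \emph{unique}; consequently a second $\calK_{4,0}$ arc closing up $c'$ would have to run along $\alpha$ on the same side as $\kappa$, contradicting tautness (passing twice through the same bubble on the same side of $L$).

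The same uniqueness of $\alpha$ is also what the paper uses to kill your subcase $(0,2)$ — the arc of $c'\cap L$ spanning the two intersection points must be $\alpha$ itself, again contradicting tautness — and to pin down the $(2,0)$ subcase to the single arc adjacent to $\kappa$ on the other side of $\alpha$, which is Figure~\ref{fig:C6a}. Your sketch for $(0,2)$ (prime/twist-reduced applied to the subdiagram bounded by $c'$) and your enumeration plan for $(2,0)$ are not implausible, but they are left at the level of intentions; the concrete missing ingredients are (i) the two-arc case of the classification lemma, and (ii) the distance-$2$/unique-bridge argument that the paper uses as the single anchor for all three eliminations.
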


\begin{figure}[ht!]
\centering
\begin{overpic}[height=3.5cm]{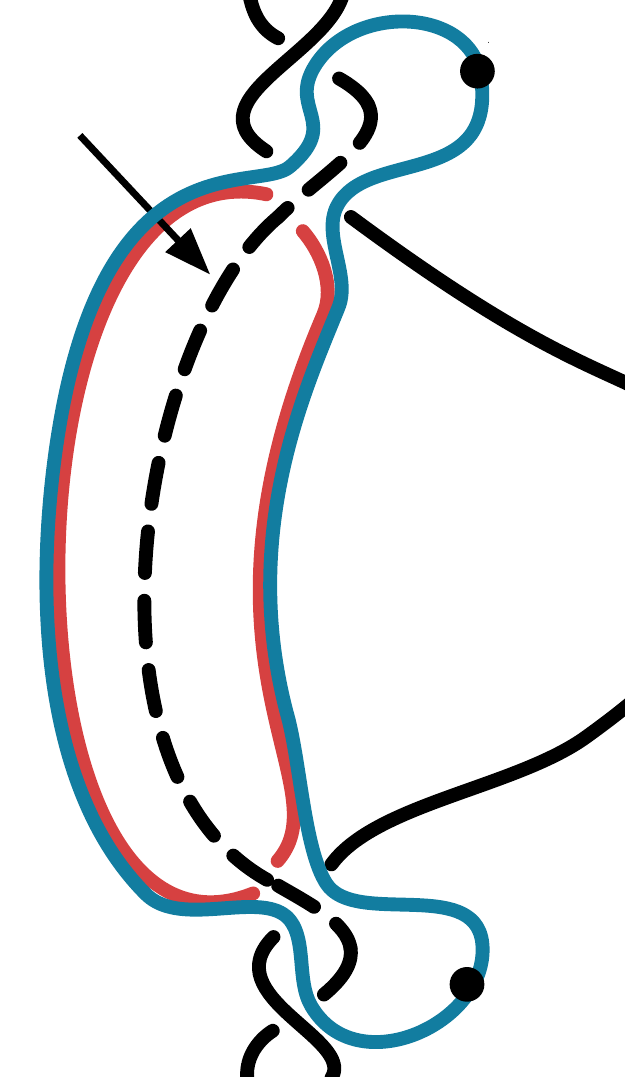}
\put(47,96){$a$}
\put(46,2){$b$}
\put(0,90){$\alpha$}
\end{overpic}
\caption{The unique form of a curve in $S\cap P^+$ containing an arc 
$\kappa$ in $\calK_{4,0}$.}\label{fig:C6a}
\label{fig:  desired itersection 1}
\end{figure}

\begin{proof}
Let $\kappa$ be the subarc of $c'$ in $\calK_{4,0}$, with endpoints $a$ and $b$. As $c$ must have 
$\chi'(c')=0$, it follows from Lemma~\ref{lem: classification of chi'=0} that the complementary 
arc of $\kappa$ in $c'$ 
is either in $\calK_{4,0}$, or passes through exactly two   bubbles, or through two 
intersection points with the link $L$. 

The distance between $a$ and $b$ is 2: It is clearly smaller or equal to $2$. It must be 
even as the regions have the same color and if it is  $0$  the diagram would not be  
twist-reduced. So, by  Lemma~\ref{obs: single arc} the only subarc $\alpha\subset L\cap P^+$ 
connecting the region containing $a$ to the region containing $b$ is the bridge adjacent to 
$\kappa$. See Figure~\ref{fig:C6a}.  
The complement of $\kappa$  in $c'$ cannot be an arc of $\calK_{4,0}$ since any arc of $\calK_{4,0}$ 
connecting these regions must follow $\alpha$ from the same side, contradicting the assumption that
$S$ is taut. The complement of $\kappa$ in $c'$  cannot contain exactly two intersection points, 
as the arc spanning them must be $\alpha$, which again results in a contradiction to the tautness of $S$. 
Thus, the complement of $\kappa$ in $c'$ must contain two bubbles. The only possible such arc is an 
arc on the other side of $\alpha$ adjacent to $\kappa$, as in  Figure~\ref{fig:C6a}. 
\end{proof}

\begin{lemma}\label{lem: kappa in K_{3,0}}
If $c'$ contains a subarc $\kappa\in\calK_{3,0}$ then it is of the form shown in Figure~\ref{fig:C6}.
\end{lemma}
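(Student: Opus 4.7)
The plan is to mimic the strategy of Lemma \ref{lem: kappa of K_{4,0}}, adapted from its distance-$2$ regime to a distance-$1$ regime. Let $\kappa\in\calK_{3,0}$ be the subarc of $c'$, with endpoints $a,b$. Since $\kappa$ meets three bubbles and no intersection points, the number of color changes along $\kappa$ is odd, so the regions containing $a$ and $b$ have different colors; examining Figure \ref{fig:c2}(b) shows they lie at distance exactly $1$.

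By Lemma \ref{lem: no tilde}, $\kappa\in\widehat\calK_{3,0}$, and the hypothesis $\chi'(c')=0$ together with Lemma \ref{lem: classification of chi'=0} leaves only two regimes for the complementary arc $\beta=c'\ssm\kappa$. In the first regime, $\bbb{c'}=5$ and $\bbb{\beta}=2$; since the endpoints of $\beta$ are of different colors, $\bbl{\beta}$ must be odd, forcing $\bbl{\beta}=1$ and $\bdr{\beta}=1$. In the second regime, $c'$ is a union of two arcs of $\calK$; a color-parity check at the endpoints, combined with $\tild\calK=\emptyset$, rules out $\calK_{4,0}$, $\calK_{2,1}$ and $\tild\calK$, leaving $\beta\in\calK_{3,0}$.

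For each regime I would enumerate the locations in the plat diagram where such a $\beta$ can live, using the following structural constraints. Tautness (Lemma \ref{lem: properties of curves}) forbids $c'$ from passing through the same bubble twice on the same side of $L$ and from exhibiting the local patterns of Figures \ref{reducing bubbles a} and \ref{reducing bubbles b}; the $3$-highly-twisted hypothesis prevents a single arc from visiting both the top and bottom bubbles of one twist box; twist-reducedness (Remark \ref{rem: Plats are twist reduced}) controls short ``return'' arcs; and when $\bdr{\beta}=1$, Remark \ref{rem: c12 passes through top or bottom} pins the single intersection point to the top or bottom of a twist box. Observation \ref{obs: single arc} then constrains which pairs of regions at distance $1$ can be joined. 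Direct inspection shows that the only configuration surviving these constraints is the one drawn in Figure \ref{fig:C6}, with the second regime either producing the same picture or being excluded outright by tautness.

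The main obstacle will be the concrete enumeration in the final step: checking, for each of the finitely many positions the single bubble of $\beta$ (or the three bubbles of a second $\calK_{3,0}$ arc) can sit relative to $\kappa$, that every configuration other than the one in Figure \ref{fig:C6} is ruled out. The trickiest case is a parallel ``return'' along the same twist box, which must be forbidden using the tautness condition that $c'$ cannot pass through the same bubble twice on the same side of $L$.
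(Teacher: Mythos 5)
There is a genuine gap, and it starts with a parity slip. In the regime $\bbb{c'}=5$ you conclude that $\bbl{\beta}=1$ and $\bdr{\beta}=1$, arguing that the different colors of the endpoints force $\bbl{\beta}$ to be odd. But color changes along $\beta$ are not accounted for by bubbles alone: an arc of $c'\cap L$ also moves between regions, so the parity of $\bbl{\beta}$ is not determined by the colors of the endpoints. Moreover $\bdr{\beta}=1$ is outright impossible, since $\bdr{\kappa}=0$ and tautness (Lemma \ref{lem: properties of curves}(2)) forces $\bdr{c'}$ to be even. The correct conclusion in this regime, combining evenness of $\bdr{c'}$ with Lemma \ref{lem: properties of curves}(4), is $\bbl{\beta}=0$, $\bdr{\beta}=2$, i.e. $c'\in\calC_{3,2}$. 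Because you mis-identify the configuration, the tools you then invoke (e.g. Remark \ref{rem: c12 passes through top or bottom} pinning a single intersection point of $\beta$) are aimed at the wrong picture, and you also have the roles of the two regimes reversed: in the paper it is the two-arcs-of-$\calK_{3,0}$ case that produces Figure \ref{fig:C6} (via Observation \ref{obs: single arc}, which forces both arcs to run along the two sides of the same two-crossing bridge $\alpha$), while the $\calC_{3,2}$ case is excluded entirely.

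The second, more substantial, gap is that excluding the $\calC_{3,2}$ case cannot be done by the ``direct inspection'' of local constraints on $c'$ that you propose (tautness patterns, twist-reducedness, the $3$-highly-twisted condition). The paper's argument here is genuinely global: it locates the intersection point $p$ where $\beta$ first meets $L$, passes to the curve $\tilde c$ opposite to $c'$ at $b$ on the other hemisphere $P^-$, shows $\tilde c$ contains no arc of $\calK$ (so that Lemma \ref{lem: classification of chi'=0} forces $\tilde c\in\calC_{2,2}$), and then derives a contradiction with tautness from the only arc of $L\cap P^-$ that $\tilde c\cap L$ could be. Your proposal contains no step of this kind -- no appeal to an opposite curve and its classification -- so the decisive case is left unproved rather than merely unwritten.
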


\begin{figure}[ht!]
\centering
\begin{overpic}[width=3.5cm]{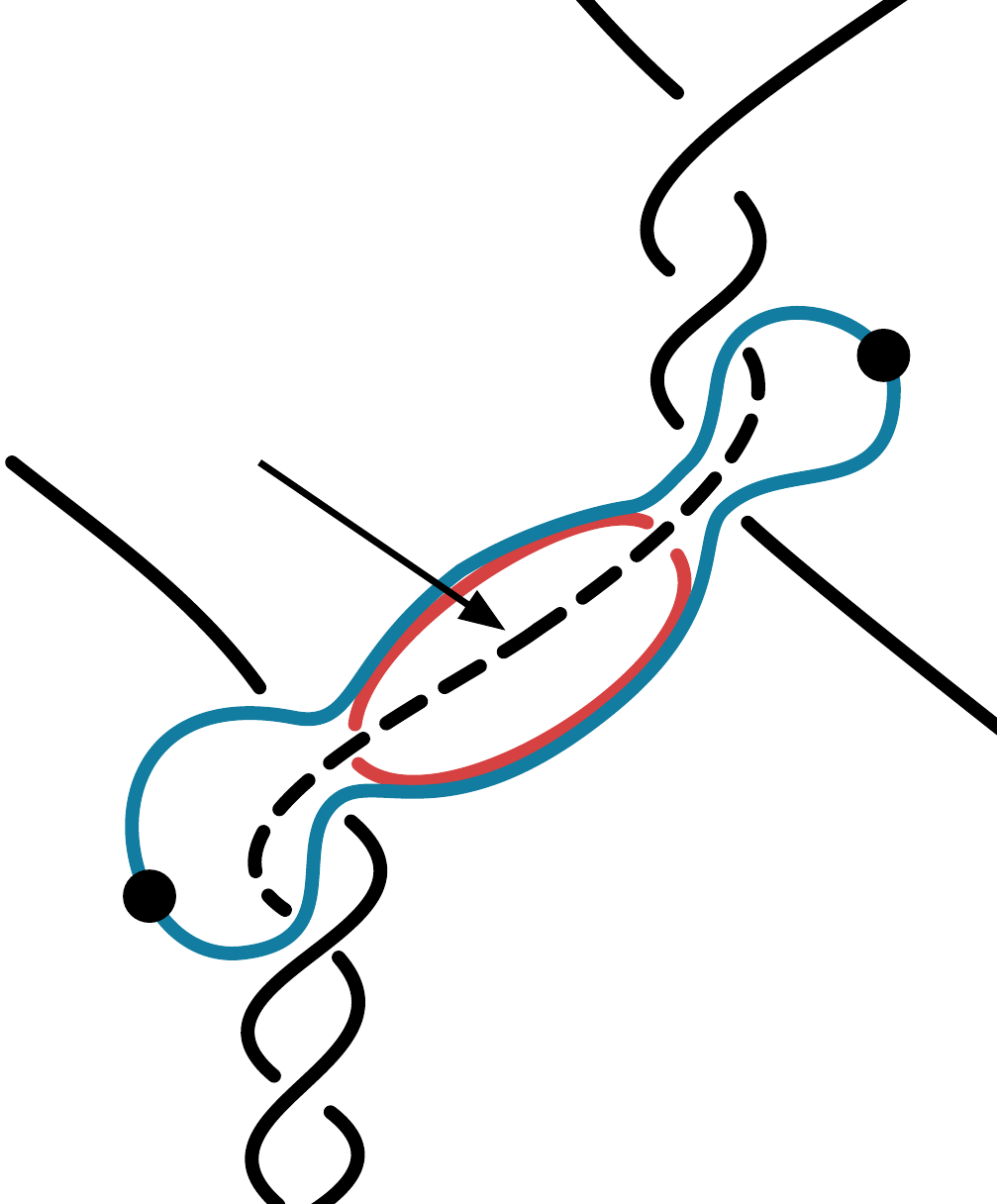}
\put(4,20){$a$}
\put(17,63){$\alpha$}
\put(5,35){$c'$}
\put(78,67){$b$}
\end{overpic}
\caption{The unique form of a curve $c'$ containing an arc of $\calK_{3,0}$.}\label{fig:C6}
\label{fig:  desired itersection 2}
\end{figure} 

\begin{proof}
Let $c'$ be a curve containing a subarc $\kappa\in \calK_{3,0}$. Let $a,b$ be the endpoints of $\kappa$ as in Figure \ref{fig: subarc parallel to kappa'}. As $\chi'(c')=0$, it follows from Lemma~\ref{lem: non-positive contribution of chi'} that the complementary subarc $\beta = c'\ssm \kappa$ is either in $\calK_{3,0}$ (in which case $c'\in \calC_{6,0}$), or 
contains exactly two intersections and no bubbles (in which case $c'\in\calC_{3,2}$).

If $\beta \in \calK_{3,0}$,
let $\alpha$ be the the subarc of $L\cap P^+$ connecting the regions containing $a$ and $b$, adjacent to $\kappa$, as in Figure \ref{fig: subarc parallel to kappa'}.
Similarly, let $\alpha'$ be the subarc of $L\cap P^+$ connecting the regions containing $a$ and $b$, adjacent to $\beta$.
As each of $\alpha,\alpha'$ passes over two crossings and the uniqueness implied by Lemma~\ref{obs: single arc}, $\alpha=\alpha'$.
Since $S$ is taut, $\kappa$ and $\beta$ must be two different sides of $\alpha$, resulting in the configuration depicted in Figure \ref{fig:C6}.

Next suppose $\beta$ has two intersection points and no bubbles. Note that $\kappa$ must pass through
two bubbles in one twist box and through a single bubble in another. Let $T$ be the second twist box. 
The arc $\beta$ starts from $b$ and first meets $L$ at some intersection point $p$.
The two endpoints of $\beta$ must belong to regions of different colors since its complement in $c'$ passes through three bubbles. In particular $\beta$ cannot connect the two regions to the right and left of the twist box $T$. It follows that the point $p$ cannot be any of the points depicted by small empty squares in Figure \ref{fig:C32}(a), as otherwise the arc $c'\cap L\subseteq \beta$ would be one of the arcs in $P^+\cap L \cap T$ connecting its right and left sides. The point $p$ also cannot be the point depicted by a small empty circle as otherwise $S$ is not taut.  

\begin{figure}[ht!]
\centering
\subfigure[]{
\begin{overpic}[width=4cm]{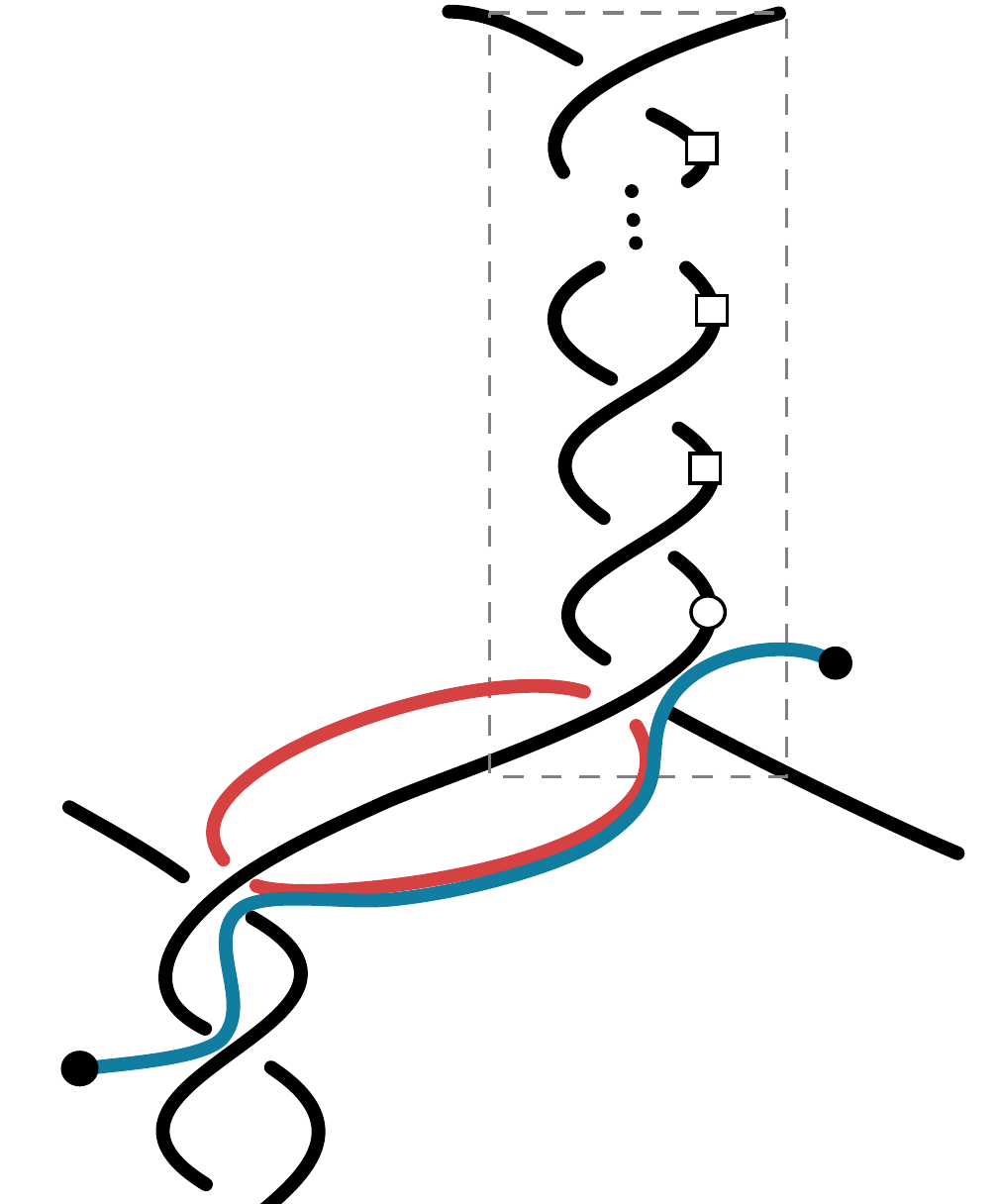}
\put(70,47){$b$}
\put(32,80){$T$}
\put(52,80){$\vdots$}
\put(40,18){$c'$}
\end{overpic}
}
\hskip1cm
\subfigure[]{
\begin{overpic}[width=4cm]{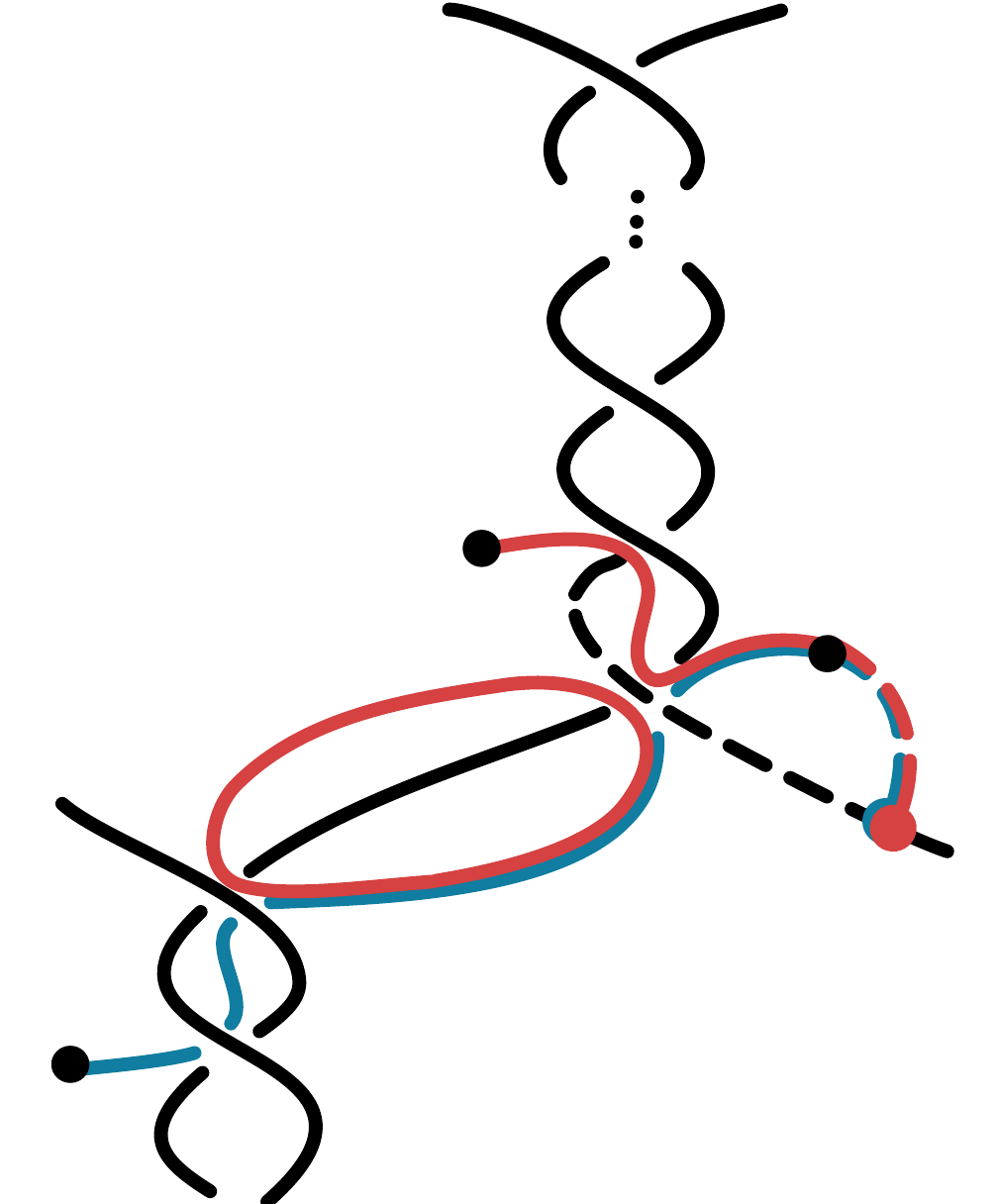}
\put(34,52){$q$}
\put(52,80){$\vdots$}
\put(70,47){$b$}
\put(72,24){$p$}
\put(40,57){$\tilde c$}
\put(40,20){$c'$}
\end{overpic}
}
\caption{A possible configuration of $c'\in \calC_{3,2}$ on $P^+$ that has an arc of $\calK_{3,0}$: (a) The side $P^+$, the twist box $T$ and the forbidden positions for the point $p$. (b) The side $P^-$, the opposite curve $\tilde{c}$, and one of the possible positions for the point $p$. }\label{fig:C32}
\end{figure}

Assume that $c'$ is a curve on $P^+$.
Consider the curve $\tilde c$ opposite to $c'$ at $b$ on $P^-$. See Figure \ref{fig:C32}(b). 
The curve $\tilde c$ must pass through two bubbles in $T$ and through the intersection point $p$. Denote a point on $\tilde c$ just beyond this second bubble by $q$ (see Figure \ref{fig:C32}(b)). 
The points $b$ and $q$ are on opposite sides of the twist box $T$.
The curve $\tilde c$ does not contain any $\eta$ in $\calK$: If it does, then $\eta$ will not pass through the two bubbles or intersection point specified above; thus, $\bbb{\tilde c}\ge \bbb{\eta}+3$ in contradiction to Lemma \ref{lem: classification of chi'=0}.

Therefore, as $\tilde c$ passes through at least two bubbles and two intersection points and contains no arc of $\calK$, by Lemma \ref{lem: classification of chi'=0} it must be $\tilde c\in \calC_{2,2}$. Thus,
$\tilde c\cap L$ is an arc of $L\cap P^-$ connecting the regions containing $b,q$ and ending at the point $p$. 
Since $\tilde c\cap L$ connects the two regions to the left and right of a twist box it must be an arc that passes through the twist box.
Since $\tilde{c}$ is opposite to  $c'$, the point $p$ cannot be any of the points depicted by small empty squares and circles in Figure \ref{fig:C32}(a). This leaves only one possible such arc, depicted by a black dotted line in Figure \ref{fig:C32}(b). 
However, if $\tilde c$ contains such an arc, this would imply that $S$ is not taut in contradiction to the assumption on $S$.
\end{proof}

\begin{lemma}\label{lem: no kappa in K_2,1}
There is no curve $c'\in\calC$ containing a subarc $\kappa$ in $\calK_{2,1}$
\end{lemma}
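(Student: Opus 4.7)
The plan is to assume for contradiction that some $c'\in\calC$ contains a subarc $\kappa\in\calK_{2,1}$. Since $S$ is taut and essential with $\chi(S)=0$, Lemmas \ref{lem: summing chi' gives Euler char} and \ref{lem: non-positive contribution of chi'} force $\chi'(c')=0$, so $c'$ falls into case (\ref{lem: chi'=0 case 2,1}) or case (\ref{lem: chi'=0 case two arcs}) of Lemma \ref{lem: classification of chi'=0}. Because $\bdr{\kappa}=1$ and $\bdr{c'}$ must be even by Lemma \ref{lem: properties of curves}(2), and because arcs in $\widehat\calK_{3,0}$, $\calK_{4,0}$, and $\tild\calK$ all have $\bdr=0$, the two possibilities reduce to: (a) $c'\in\calC_{3,2}$, with complementary arc $\beta=c'\ssm\kappa$ satisfying $\bbl{\beta}=1$ and $\bdr{\beta}=1$; or (b) $c'\in\calC_{4,2}$ is the union of two arcs $\kappa,\kappa'\in\calK_{2,1}$ meeting at the two intersection points of $c'\cap L$.

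For case (a), let $c\in\calC_{1,2}$ be the curve opposite to $\kappa$, and let $T$ be the twist box through whose two adjacent end-bubbles $\kappa$ passes. By Remark \ref{rem: c12 passes through top or bottom}, the arc $c\cap L$ is a bridge over the top or bottom crossing of $T$, and this pins down both endpoints of $\kappa$. I will enumerate the possible locations of the single bubble and single intersection point along $\beta$. If the bubble of $\beta$ lies in $T$, then either $c'$ passes twice through the same bubble on the same side of $L$ (contradicting Lemma \ref{lem: properties of curves}(5)) or $\kappa\cup\beta$ contains one of the forbidden subarcs of Lemma \ref{lem: properties of curves}(6)--(7). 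If the bubble of $\beta$ lies in an adjacent twist box, the 3-highly twisted hypothesis together with a close reading of the proof of Case~1.d of Lemma \ref{lem: non-positive contribution of chi'} shows that the small extension $\kappa^{*}$ of $\kappa\cup(c'\cap L)$ has endpoints in regions of distance $\ge 1$; an arc $\beta$ with only one bubble can only connect regions of distance $1$, and checking each such candidate shows that the resulting closed curve $\kappa\cup\beta$ bounds a subdiagram contradicting the twist-reduced hypothesis (Definition \ref{def: twist reduced} via Remark \ref{rem: Plats are twist reduced}), or else contains an arc violating Lemma \ref{lem: properties of curves}(8).

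For case (b), both $\kappa,\kappa'\in\calK_{2,1}$ share the two intersection points of $c'$ with $L$, and their opposite curves $c,c^{\#}\in\calC_{1,2}$ each have $c\cap L$ and $c^{\#}\cap L$ emanating from these same two points. By Remark \ref{rem: c12 passes through top or bottom}, each of $c\cap L,c^{\#}\cap L$ is a bridge over the top or bottom crossing of a single twist box. Examining which twist boxes can simultaneously house the pairs of bubbles of $\kappa$ and of $\kappa'$ while allowing $c'=\kappa\cup\kappa'$ to close into a simple curve on $P^\pm$, the only consistent configurations force $c$ and $c^{\#}$ to traverse the same bridge and twist box; this in turn forces $\kappa$ and $\kappa'$ to run parallel on opposite sides of a common arc of $L\cap P^\mp$, so that $c'$ passes through a common bubble twice on the same side of $L$, violating Lemma \ref{lem: properties of curves}(5).

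The main obstacle will be the enumeration in case (a): the bubble of $\beta$ can a priori lie in any of several neighboring twist boxes, and ruling each option out requires using the plat template $\mathcal{T}$ (alternating odd/even braid rows), the 3-highly twisted condition, and the full list of tautness properties from Lemma \ref{lem: properties of curves}. The argument will closely parallel, but is subtler than, the corresponding enumeration in Lemma \ref{lem: kappa in K_{3,0}}, because the presence of an intersection point inside $\kappa$ (rather than pure bubbles) creates additional candidate continuations of $c'$.
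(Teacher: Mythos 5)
Your reduction to the two cases (one arc of $\calK_{2,1}$ with $c'\in\calC_{3,2}$, or $c'$ a union of two arcs of $\calK_{2,1}$) is correct and matches the paper's starting point, but the core of the argument is asserted rather than carried out, and two of the assertions are wrong. In case (a) you begin with ``the twist box $T$ through whose two adjacent end-bubbles $\kappa$ passes''; this mislocates the bubbles. For $\kappa\in\calK_{2,1}$ opposite to $c\in\calC_{1,2}$, one bubble of $\kappa$ lies under the bridge $c\cap L$ (a top or bottom crossing of some twist box, by Remark \ref{rem: c12 passes through top or bottom}) while the other is the bubble that $c$ itself traverses, which in general lies in a \emph{different} twist box (cf.\ Figures \ref{fig:c12} and \ref{fig: K21}); a configuration with both bubbles adjacent in one twist box would already run afoul of tautness (Lemma \ref{lem: properties of curves}(7)). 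Consequently the promised enumeration of ``the possible locations of the single bubble and single intersection point along $\beta$'' is never actually performed, and the contradiction mechanisms you invoke (twist-reducedness, tautness (5)--(8)) do not suffice: several subcases can only be killed by passing to a further arc opposite to $c'$ (the arc $\gamma$ in Figure \ref{fig: possibilities of K21 and one crossing}, or the arc $\tau$ in Figure \ref{fig:C12corners2}) and showing via Lemma \ref{lem: classification of chi'=0} that the curve containing it could not close up or would have $\chi'<0$ --- a mechanism entirely absent from your sketch. The subdivision that actually drives the argument, namely how many crossings $c\cap L$ and $c'\cap L$ pass over ($2$, $1$, or $0$), does not appear in your plan.

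In case (b) you assert that ``the only consistent configurations force $c$ and $c^{\#}$ to traverse the same bridge and twist box,'' yielding a violation of Lemma \ref{lem: properties of curves}(5). This is unsubstantiated and is not what happens: splitting according to the number of crossings of $\alpha'=c'\cap L$, the $0$-crossing case is ruled out by a colour/parity argument (the arcs $\beta_1,\beta_2$ leave $\alpha'$ into regions of opposite colours and, each carrying two bubbles, cannot meet to close up), the $2$-crossing case by Remark \ref{rem: c12 passes through top or bottom} (Figure \ref{fig: two twist boxes and alpha}), and the $1$-crossing case by showing that the forced local picture --- $\alpha'$ over the middle crossing of a three-crossing twist box with the orientations of the two neighbouring twist boxes determined as in Figure \ref{fig:C12a} --- cannot occur in a plat diagram. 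None of these is a ``same bubble twice'' contradiction, so the single mechanism you propose would not close this case even if the enumeration were done. As it stands the proposal is a reasonable plan, but the decisive case analysis is missing and partly misdirected.
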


Note that, in general, if  $c\in\calC_{1,2}$ then the arc $\alpha=c\cap L$ is a bridge. If $\alpha$ contains no crossings then it corresponds to Figure \ref{reducing bubbles a}, which would imply that $S$ is not taut. 
Thus, $\alpha$ passes through either one or two crossings.

\begin{proof}
Assume, for contradiction, that $c'\in\calC$ is a curve containing a subarc $\kappa\in\calK_{2,1}$.
By Lemma \ref{lem: classification of chi'=0} there are two cases: Either $c'$ is a union of two arcs in $\calK_{2,1}$, or $c'\in \calC_{3,2}$ and it contains one arc in $\calK_{2,1}$.

Assume first that $c'$ is the union of exactly two arcs $\kappa_1,\kappa_2\in \calK_{2,1}$.
It follows that $c'\in \calC_{4,2}$, and can be viewed as the union of three arcs 
$\alpha',\beta_1,\beta_2$, where $\alpha'=c'\cap L$ and $\beta_1,\beta_2$ are subarcs of 
$\kappa_1,\kappa_2$ respectively each passing through two bubbles. Let $c_i$, $i=1,2$, be the 
curve in $C_{1,2}$ opposite to $\kappa_i$, and let $\alpha_i=c_i \cap L$.  

Assume first that $\alpha'$ does not pass over any crossing.   Thus, $\beta_1,\beta_2$ emanate from $\alpha'$ 
into adjacent regions with opposite colors. Each of them passes through two bubbles, and hence 
they cannot meet to form a closed curve.

Next assume that $\alpha'$ passes over two crossings. Then, $\alpha_1$ (and $\alpha_2$) passes over a crossing which is not the top or bottom in its twist box, contradicting Remark \ref{rem: c12 passes through top or bottom}. See Figure \ref{fig: two twist boxes and alpha}. 

\begin{figure}[ht!]
\centering
\begin{overpic}[width=3.5cm]{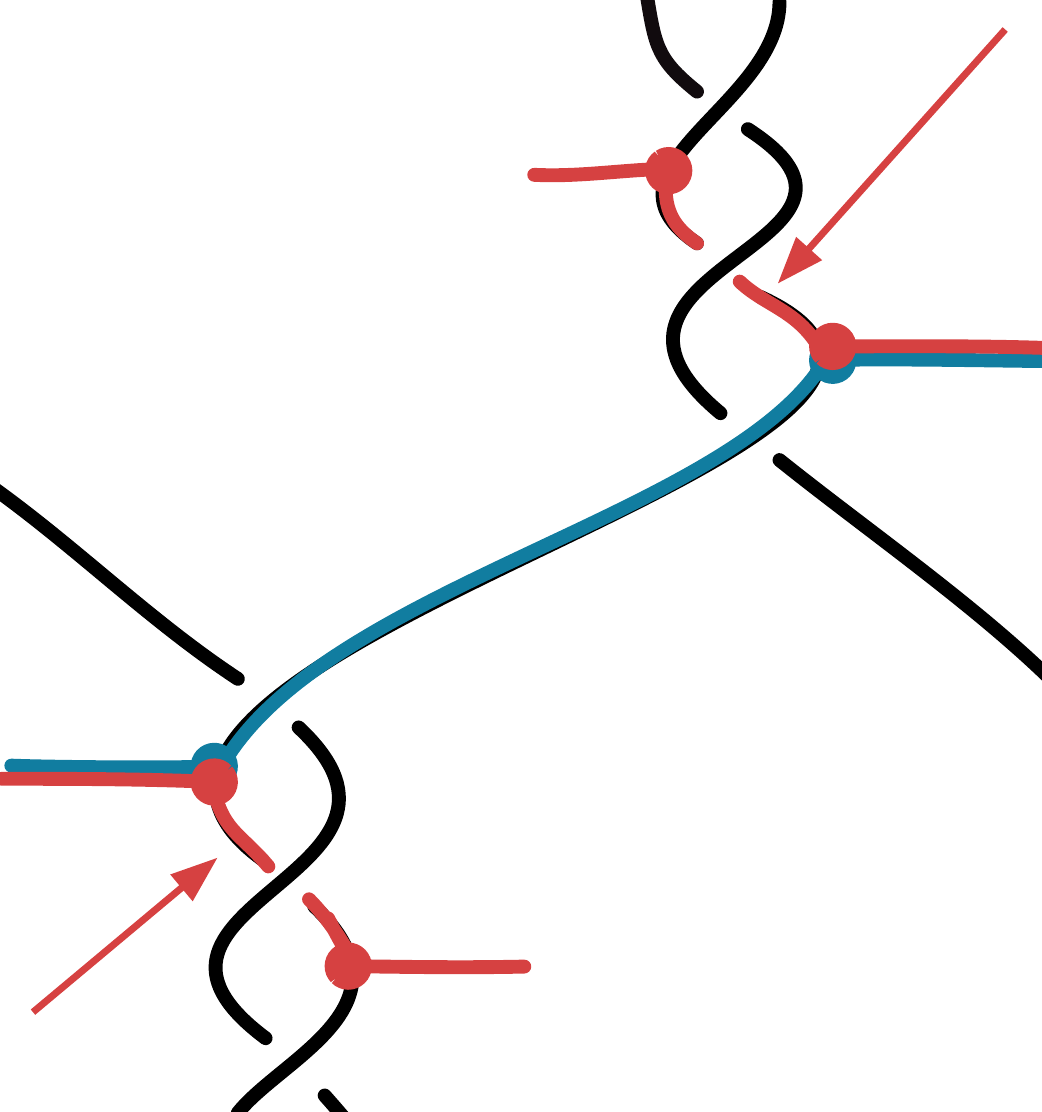}
\put(50,45){$\alpha'$}
\put(90,100){$\alpha_1$}
\put(40,82){$c_1$}
\put(-4,2){$\alpha_2$}
\put(50,11){$c_2$}
\end{overpic}
\caption{The arc $\alpha_1\cup\alpha'\cup \alpha_2$.}
    \label{fig: two twist boxes and alpha}
\end{figure}


Finally, assume $\alpha'$ goes through a single crossing. By Remark \ref{rem: c12 passes through top or bottom}, $\alpha_1$ (and $\alpha_2$) must pass over the top or bottom crossings of a twist box. The only possible configuration is if $\alpha'$ passes over the middle crossing of a twist box with three crossings. The fact that $c_1$ and $c_2$ pass through bubbles at the top or bottom of two other twist boxes, determines the orientations of the two adjacent twist boxes as depicted in Figure \ref{fig:C12a}.
However, as can be readily checked, a plat diagram $L$ cannot contain a subdiagram as in the figure.

\begin{figure}[ht!]
\centering
\begin{overpic}[width=7cm]{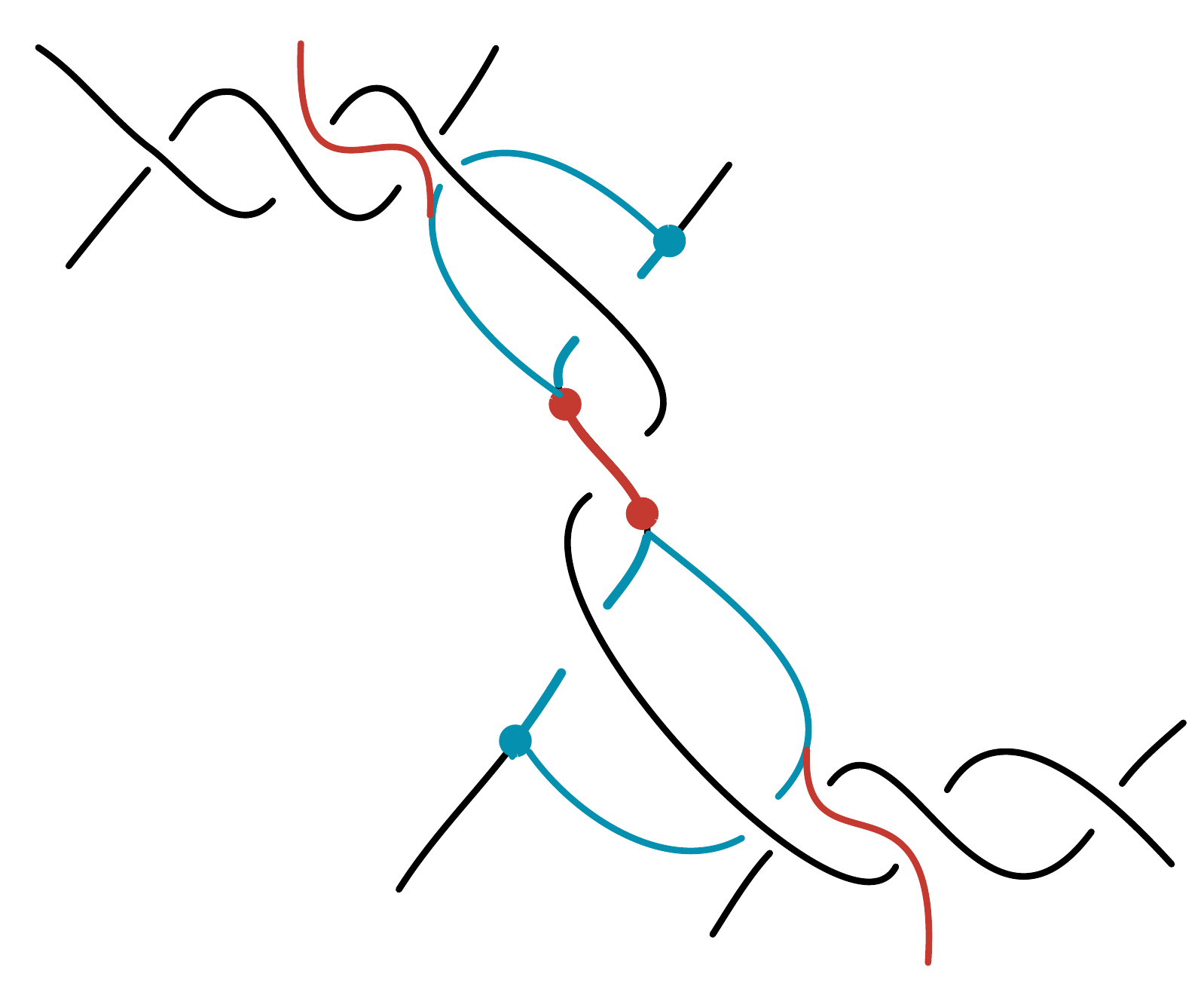}
\put(27,40){$\alpha'$}
\put(53,56){$\alpha_1$}
\put(39,27){$\alpha_2$}
\put(46,72){$c_1$}
\put(46,10){$c_2$}
\end{overpic}
\caption{}\label{fig:C12a}
\end{figure}

Now, assume that $c'\in \calC_{3,2}$ and contains exactly one arc $\kappa\in\calK_{2,1}$. Let $c\in \calC_{1,2}$ be the curve opposite to $\kappa$.

Assume first that $\alpha=c\cap L$ passes through two crossings. Since $c\ssm\alpha$ passes through 
exactly one bubble, the regions containing the endpoints of $\alpha$ are at distance 1.
Such a bridge $\alpha$ exists only in the corners of the diagram, in one of the configurations 
depicted in Figure \ref{fig:C12corners}. The corresponding curve $c'$ in each of the possible 
configurations, contains an arc $\eta$ as in the figure,  which prolongs 
$\kappa$. The endpoints of $\eta$ are at distance greater than one, and hence 
$c' \ssm \eta$ cannot pass through only one bubble, in contradiction to the assumption.

\begin{figure}
    \centering
   \begin{overpic}[height=4cm]{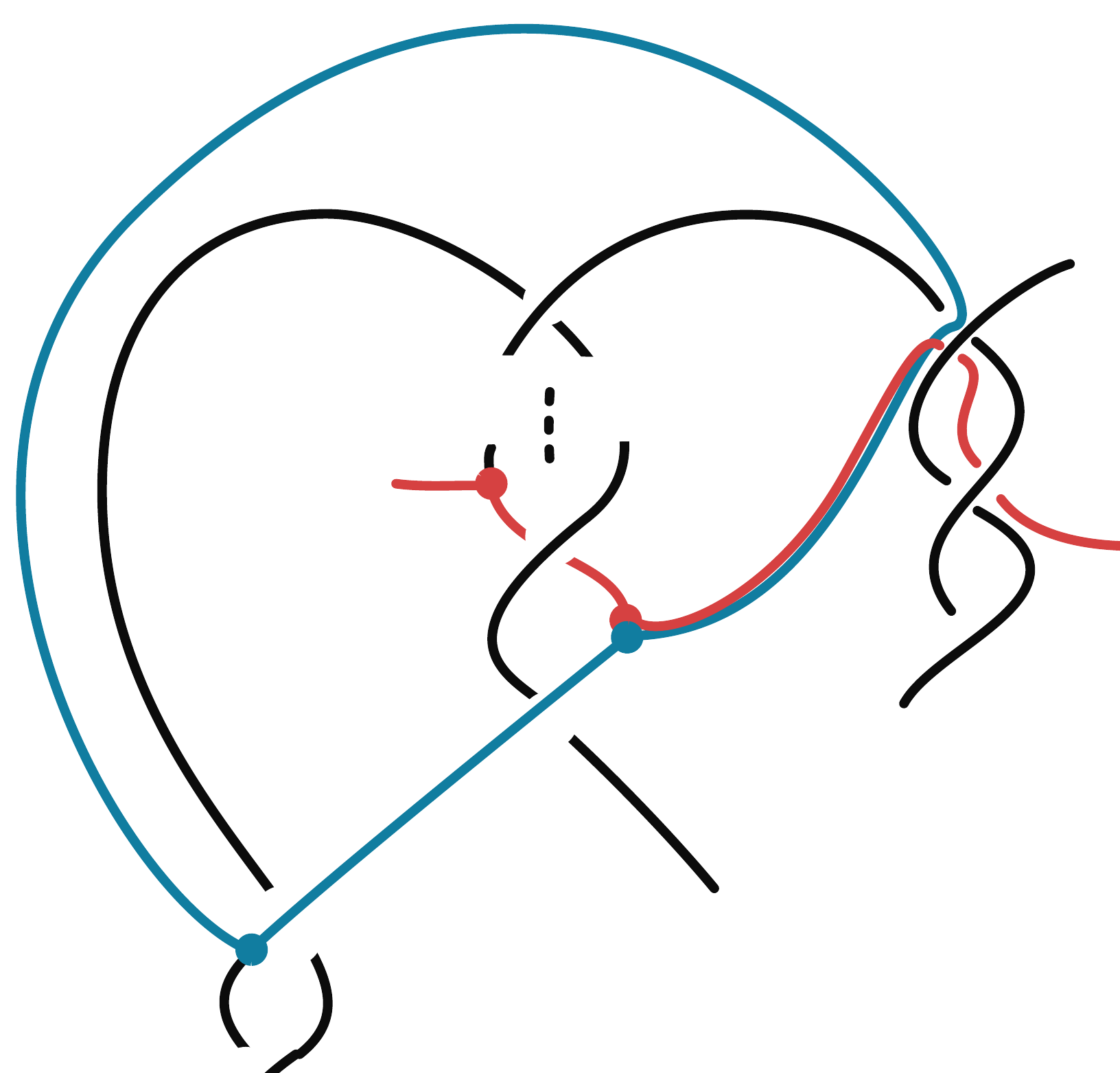}
\put(5,80){$c$}
\put(102,46){$\eta$}
\end{overpic}
\hskip 1cm
   \begin{overpic}[height=4cm]{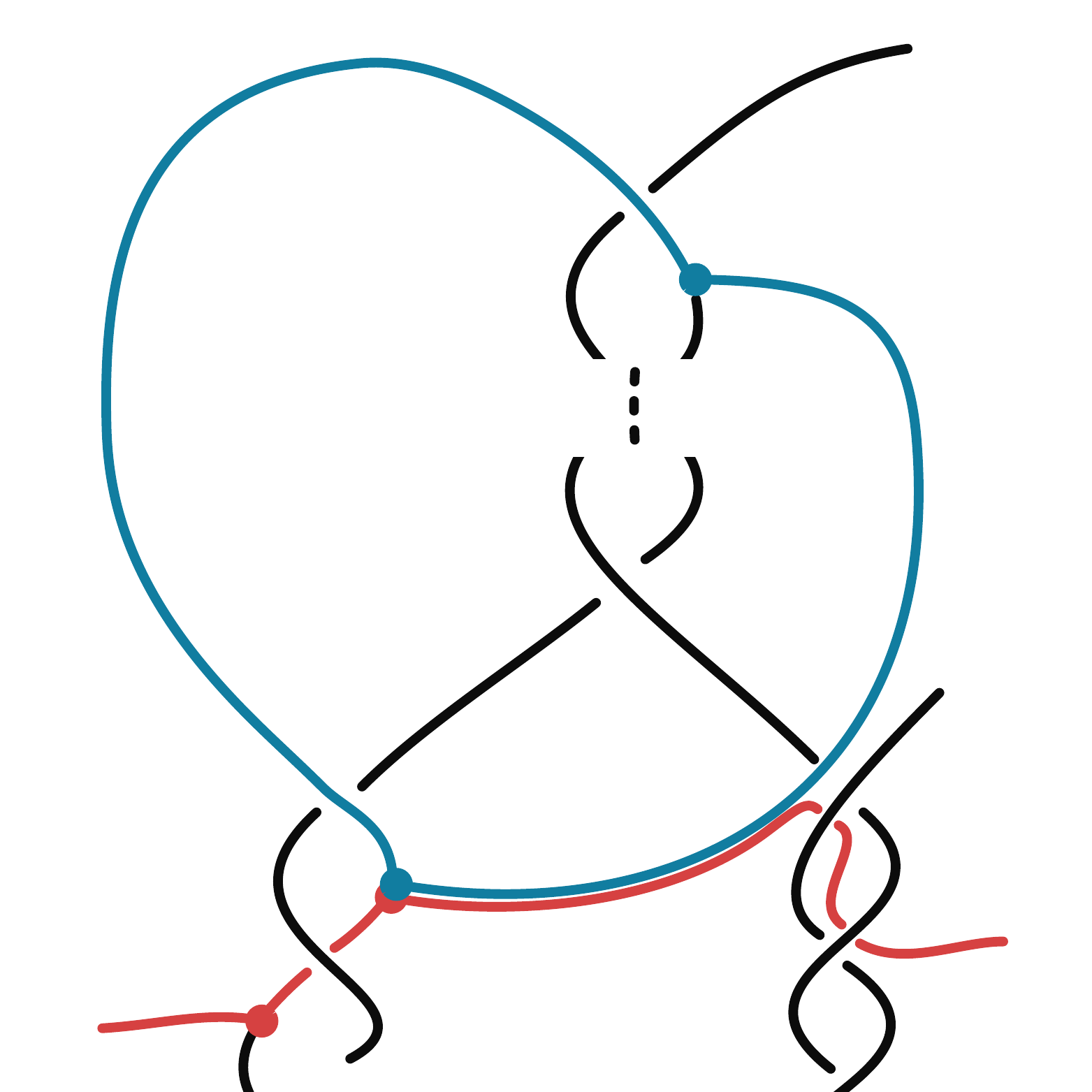}
\put(5,80){$c$}
\put(96,14){$\eta$}
\end{overpic}
\caption{The two possibilities for a curve $c\in\calC_{1,2}$ with a bridge passing through two
crossings.}\label{fig:C12corners}
\end{figure}

Next assume that $\alpha$ passes through one crossing. Then let $\eta$ be small prolongation of 
$\kappa \cup (c' \cap L)$ in $c'$. The arc $c' \cap L$ contains zero, one or two crossings.

If it contains two crossings then the arc $c\cap L$ passes over a crossing which is not the top or bottom of its twist box, in contradiction to Remark \ref{rem: c12 passes through top or bottom} similar to Figure \ref{fig: two twist boxes and alpha}.

If $c'\cap L$ contains one crossing. The curve $c$ bounds a disk $D$ on the plane, containing a segment of $L$ which connects two twist boxes. 
The five possibilities for such a curve $c$ are depicted in Figure \ref{fig: possibilities of K21 and one crossing}. The subarc $\eta$ of $c'$ is the arc depicted in blue in the figure.
In cases (a) and (b), the endpoints of $\eta$ are in regions with the same color, contradicting the assumption that $c\ssm \eta$ passes through a single bubble.
In case (d) and (e), the arc $\kappa$ cannot be continued to an arc $\eta$ such that $\eta\cap L$ has one crossing because of a conflict in orientation, hence (d) and (e) do not occur.
We are left with case (c). In this case $\eta$ can be closed to form $c'$ as shown in figure. However, the arc $\gamma$ opposite to $c'$, at the bottom of the figure, passes through two bubbles and two intersection points, none of which belong to an arc of $\calK_{2,1}$. By Lemma \ref{lem: classification of chi'=0}, the curve containing $\gamma$ must close up with no further bubbles or intersection points, which is impossible.

\begin{figure}[ht]
\subfigure[]{%
\begin{overpic}[width=2.5cm]{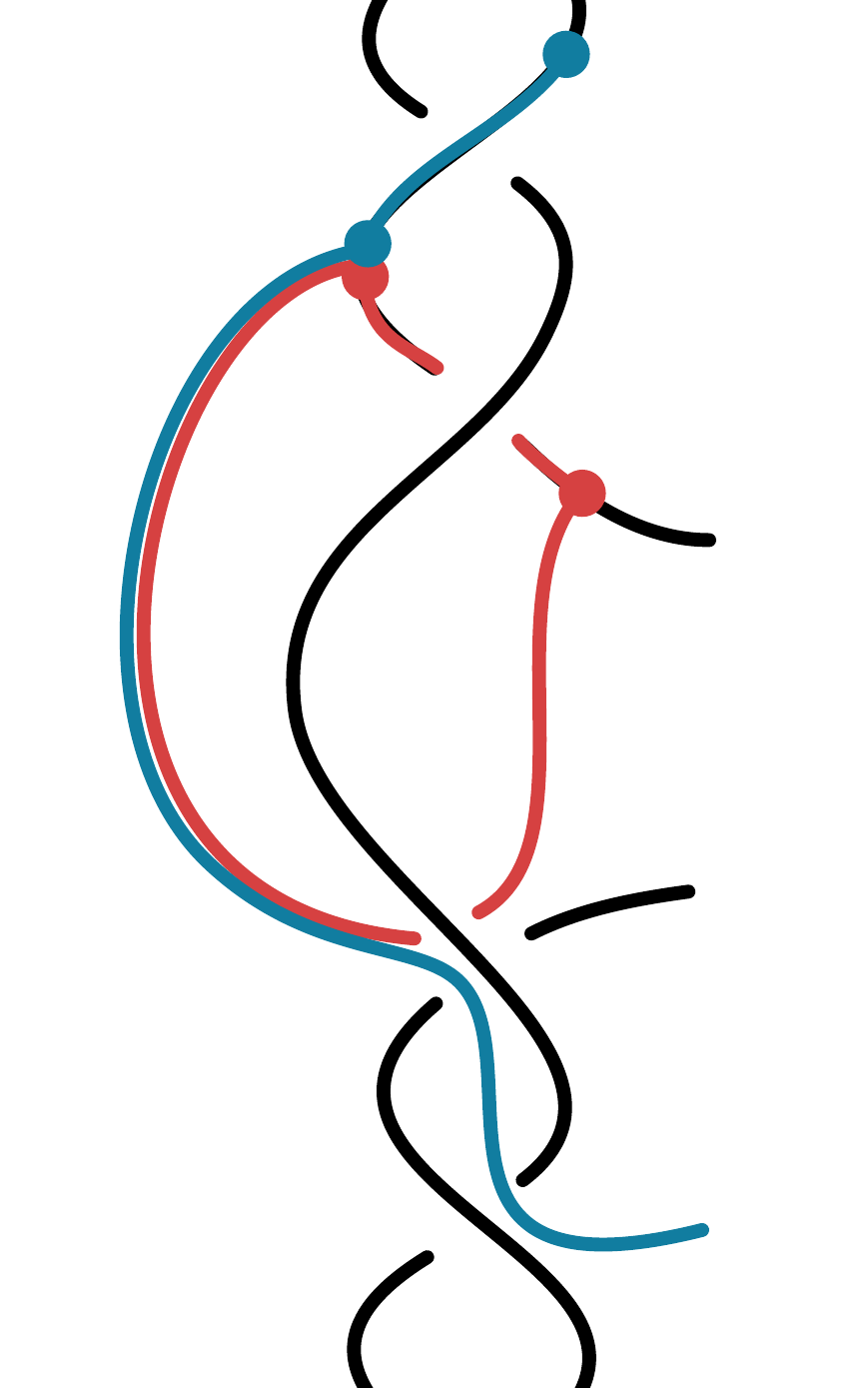}
\put(40,50){$c$}
\put(45,3){$\eta$}
\end{overpic}
}
\subfigure[]{%
\begin{overpic}[width=2.5cm]{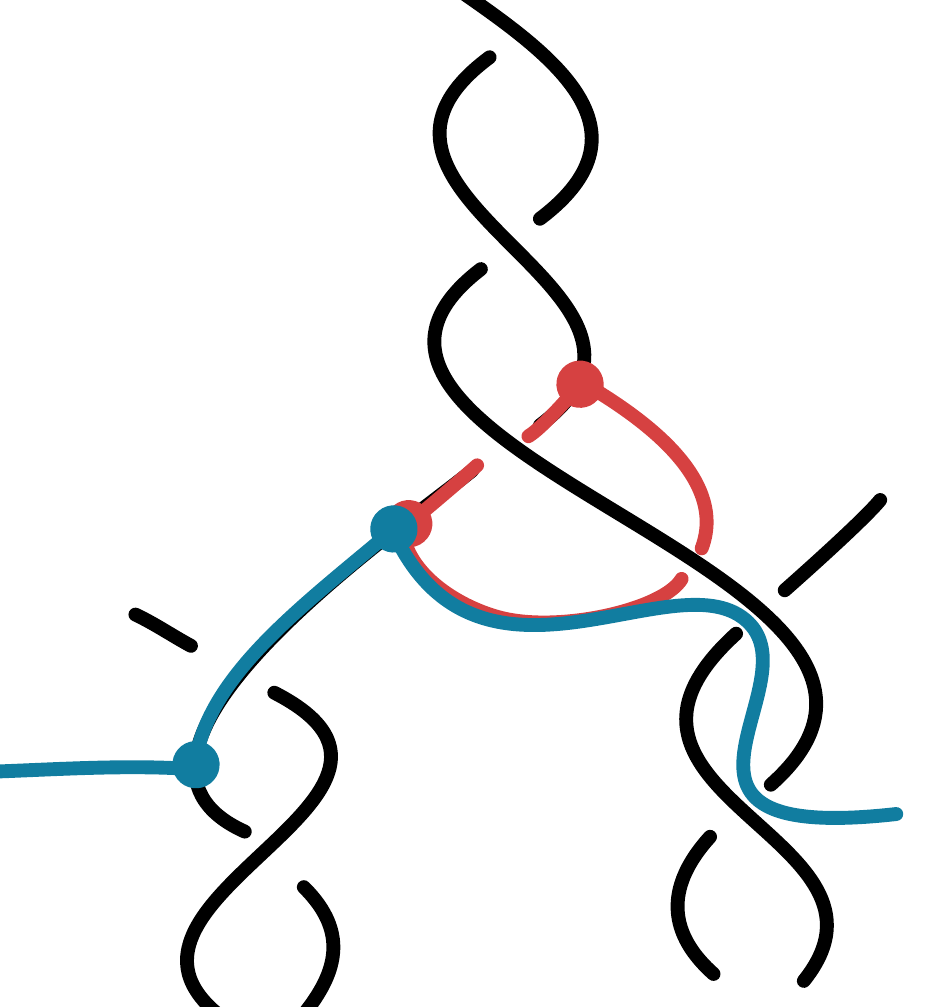}
\put(70,52){$c$}
\put(0,10){$\eta$}
\end{overpic}
}
\subfigure[]{%
\begin{overpic}[width=2.5cm]{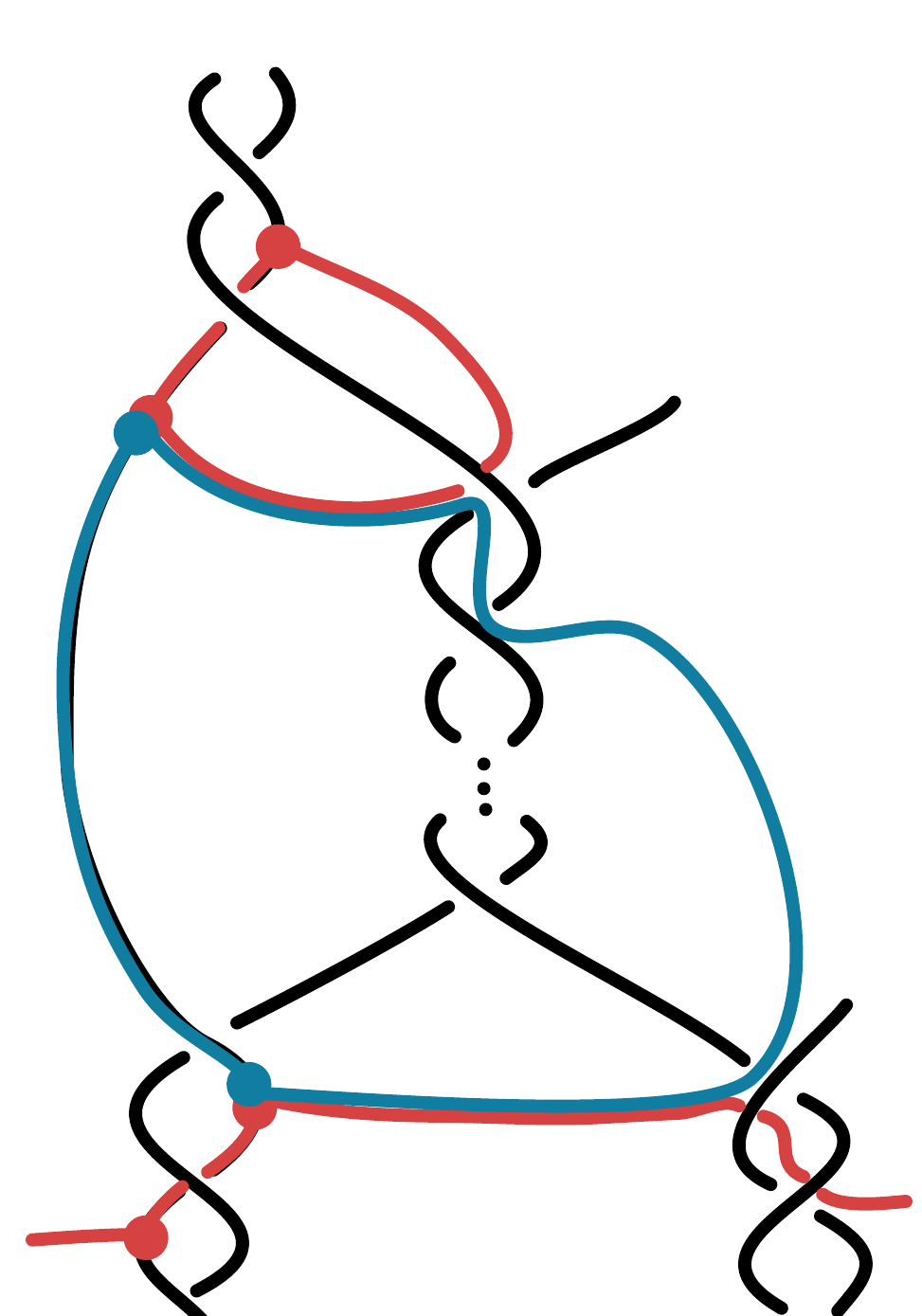}
\put(30,5){$\gamma$}
\put(30,82){$c$}
\put(8,40){$c'$}
\end{overpic}
}
\subfigure[]{%
\begin{overpic}[width=2.5cm]{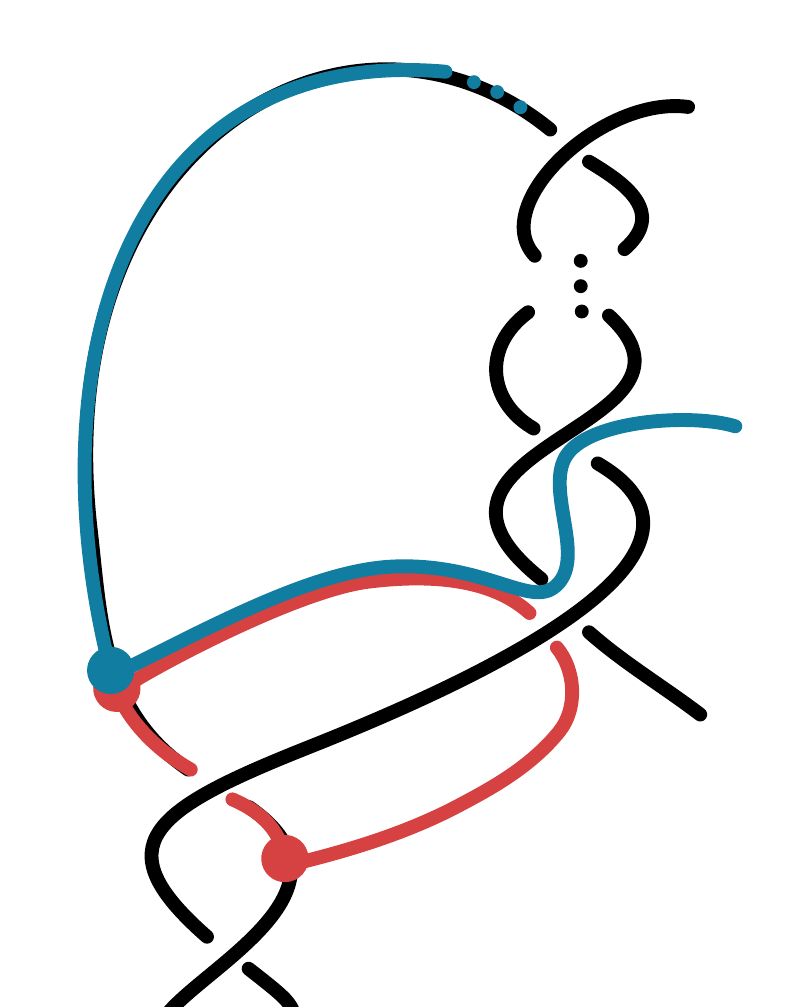}
\put(35,8){$c$}
\put(73,60){$\kappa$}
\end{overpic}
}
\subfigure[]{%
\begin{overpic}[width=2.5cm]{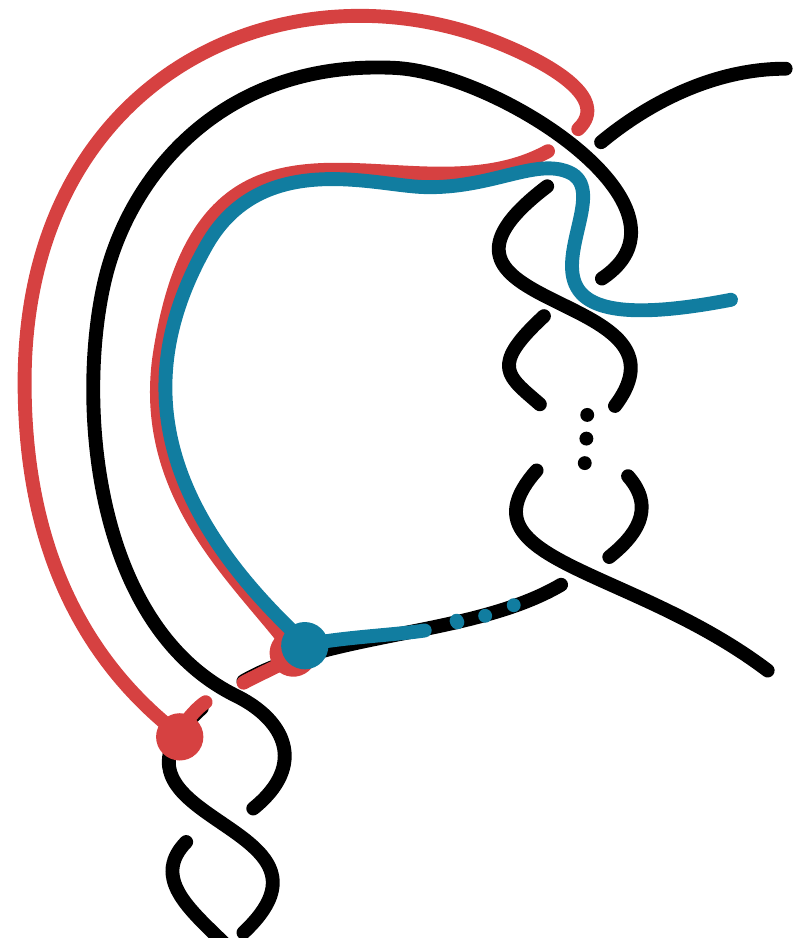}
\put(30,100){$c$}
\put(80,64){$\kappa$}
\end{overpic}
}
\caption{The five possibilities for a curve $c\in\calC_{1,2}$ (in red) and the subarc $\eta$ of the curve $c'$ (in blue) opposite to $c$ such that $c\cap L$ passes under one
crossings. Note that case (a) can occur also on the top boundary of the plat.}
\label{fig: possibilities of K21 and one crossing}
\end{figure}

Thus, suppose $c' \cap L$ contains no crossings. In this case, the endpoints of $\eta$, shown in blue in Figure \ref{fig: C12 bridge}, are in the same two regions as the endpoints of the maximal bridge (dashed) passing above the curve $c$. 
As $c'$ contains one additional bubble the endpoints of the bridge are at distance 1 and thus the bridge 
is at a corner of the plat. In each corner, there are two possible such bridges, and on each of these the curve 
$c$ can be oriented in two ways. Thus, altogether there are four possibilities that are depicted in Figure \ref{fig:C12corners2}.

\begin{figure}[ht]
    \centering
\begin{overpic}[width=4cm]{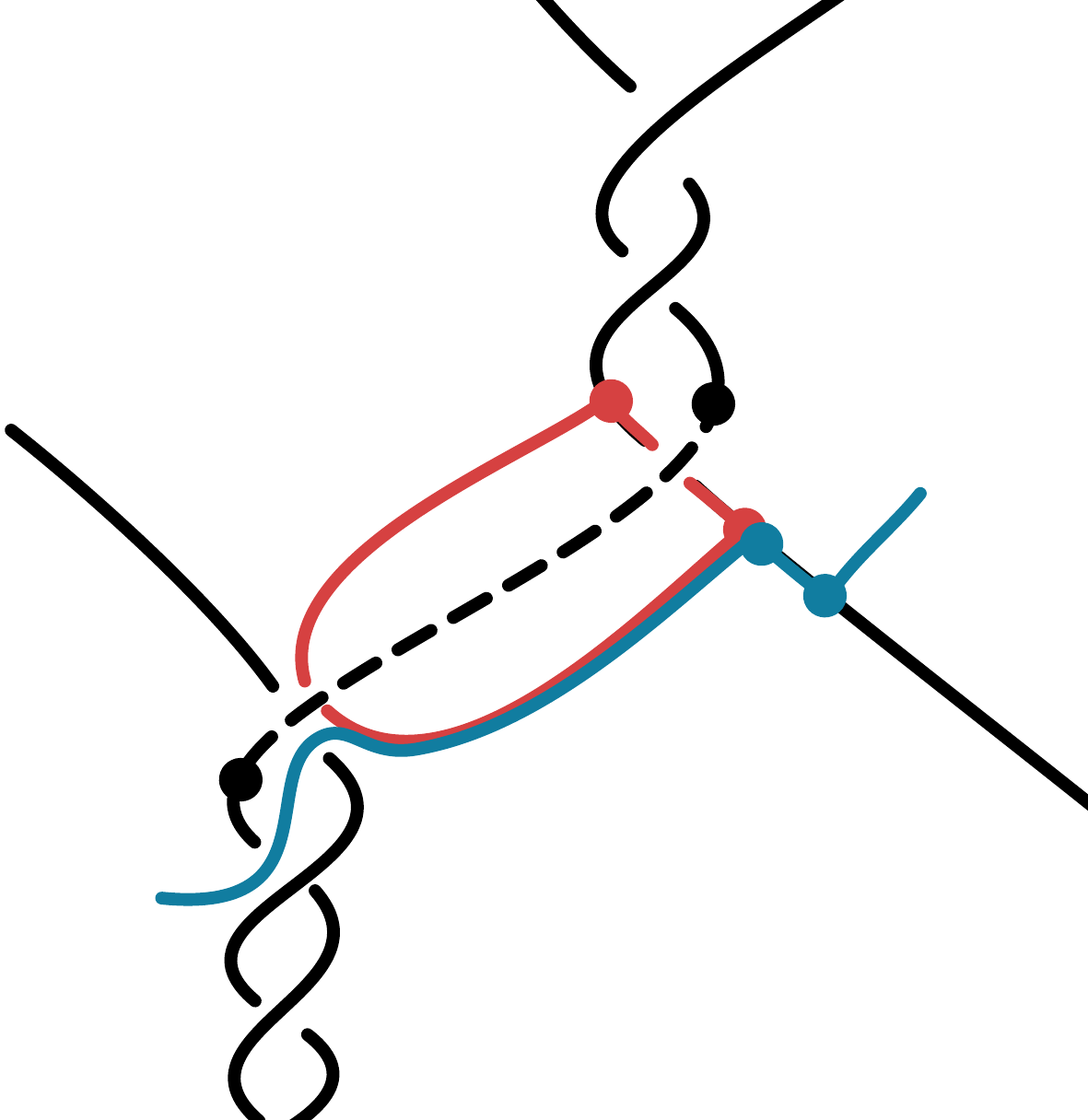}
\put(40,60){$c'$}
\end{overpic}
    \caption{The bridge of a curve $c\in C_{1,2}$ such that $c\cap L$ has one crossing.}
    \label{fig: C12 bridge}
\end{figure}

\begin{figure}[ht]
\subfigure[]{%
\begin{overpic}[width=3.3cm]{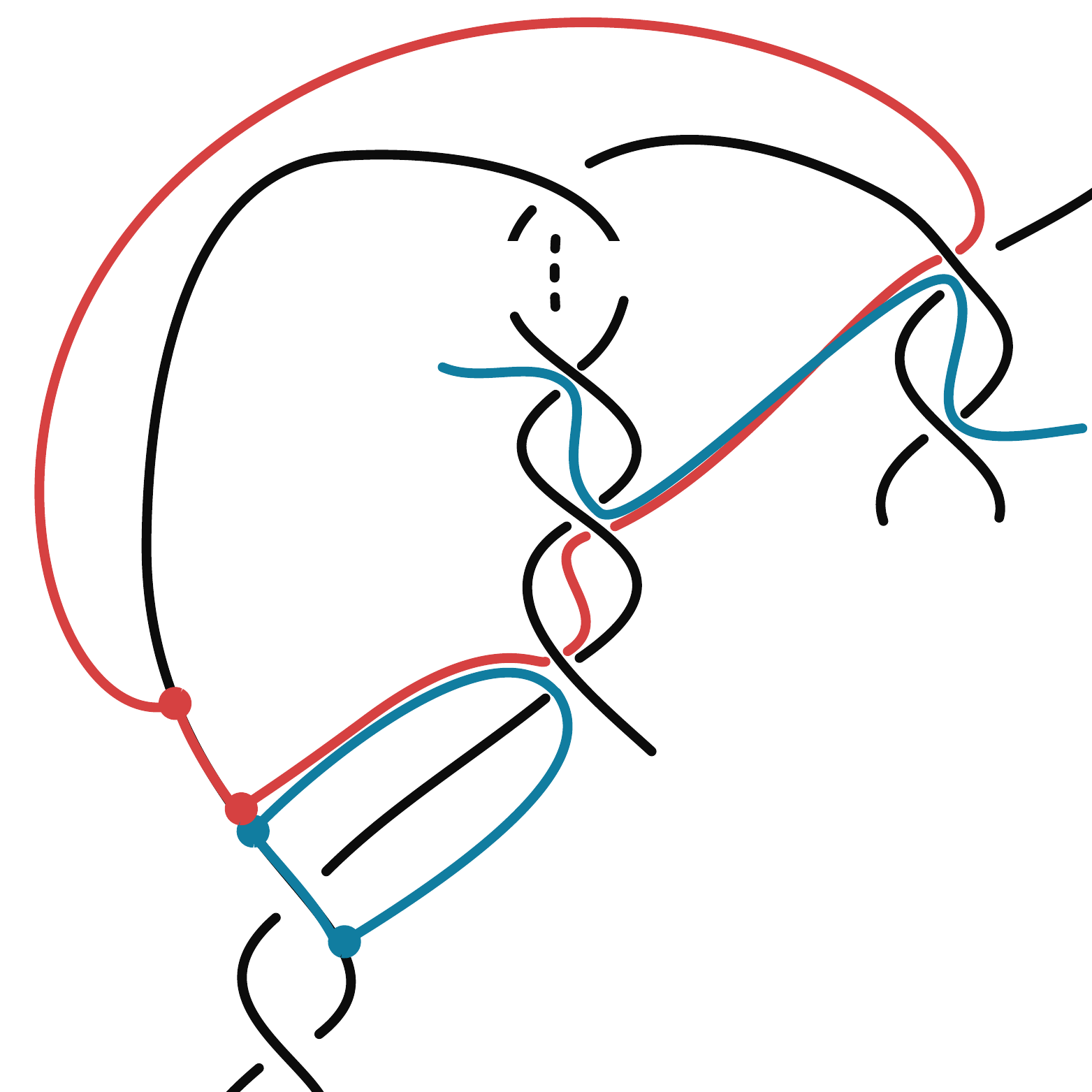}
\put(42,14){$c$}
\put(-7,45){$c'$}
\put(32,63){$\tau$}
\end{overpic}
}
\subfigure[]{%
\begin{overpic}[width=3.3cm]{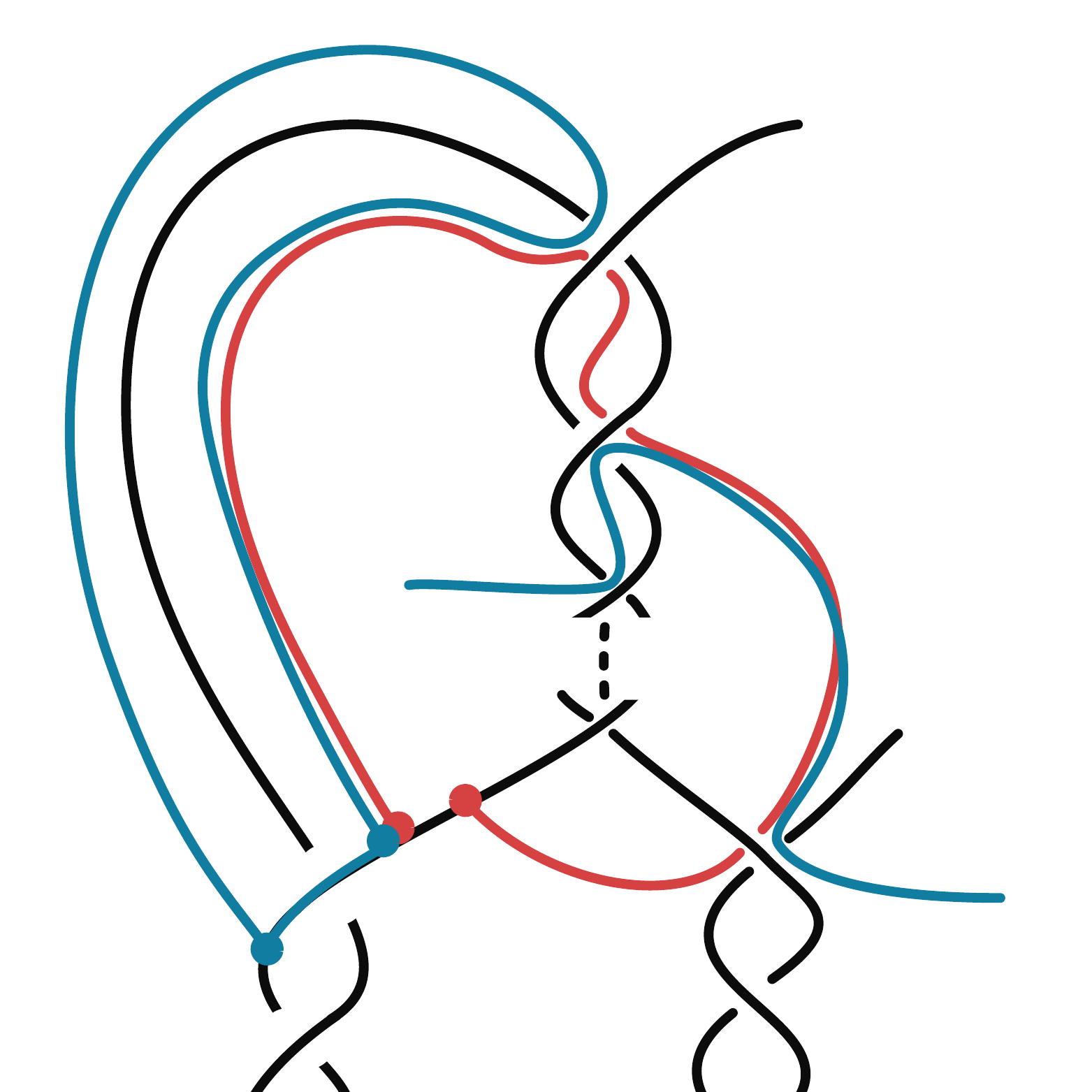}
\put(4,27){$c$}
\put(45,10){$c'$}
\put(92,15){$\tau$}
\end{overpic}
}
\subfigure[]{%
\begin{overpic}[width=3.3cm]{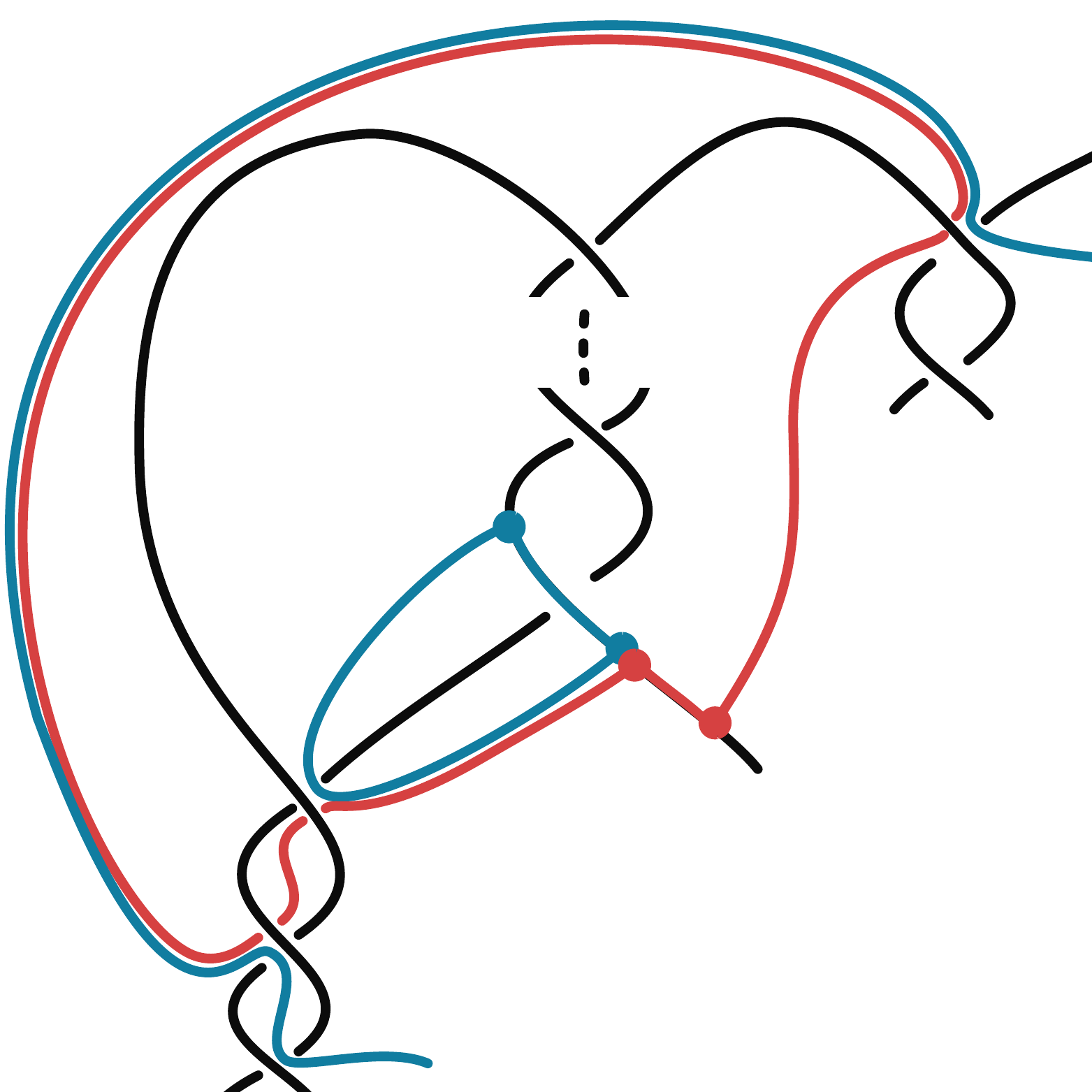}
\put(30,46){$c$}
\put(72,42){$c'$}
\put(40,2){$\tau$}
\end{overpic}
}
\subfigure[]{%
\begin{overpic}[width=3.3cm]{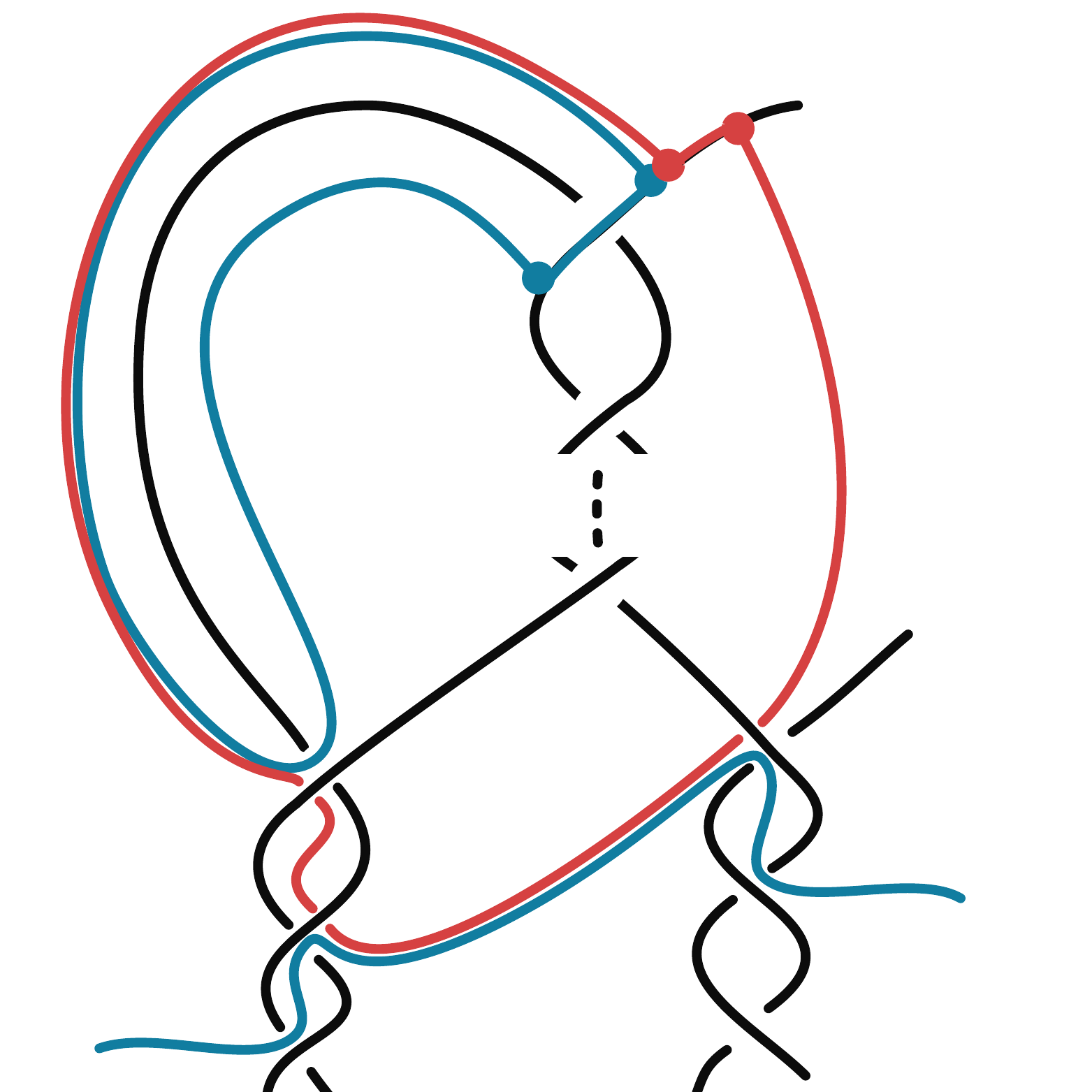}
\put(30,47){$c$}
\put(75,75){$c'$}
\put(90,16){$\tau$}
\end{overpic}
}
\caption{The four possibilities for a curve $c\in\calC_{1,2}$ with a bridge passing through zero
crossings.}
\label{fig:C12corners2}
\end{figure}

In each of these possibilities, there is an arc $\tau$ opposite to $c'$, so that $\tau$ 
has three or four bubbles and its endpoints are at distance at least two. Thus, the closed curve 
containing $\tau$ cannot be in $\calK$ and thus must have  $\chi'<0$.
\end{proof}

\begin{definition}\label{def: cross ans turn}
If a curve enters and exists a twist box from the left or right side edges then we say that it $ \it{crosses}$ the twist box. Otherwise, 
if it enters from a side edge and exists from a top/bottom edge or vice versa we say that it $\it{turns}$ at the twist box.
\end{definition}

\begin{lemma}\label{lem: No C22}
There are no curves in $\mathcal{C}_{2,2}$. 
\end{lemma}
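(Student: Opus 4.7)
Suppose for contradiction that $c \in \calC_{2,2}$; without loss of generality $c \subset P^+$, so $c = \alpha \cup \beta$, where $\alpha \subset L$ is a bridge passing over $k$ crossings for some $k \in \{0,1,2\}$, and $\beta \subset P^+ \ssm L$ is an arc through exactly two bubbles, sharing its endpoints with $\alpha$. The plan is to rule out each value of $k$ by case analysis, using the opposite curve on $P^-$ together with the earlier classifications.

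First I would handle $k = 0$ by a parity argument. In this case, both endpoints of $\alpha$ lie on a single edge of $L$. Either the two small extensions of $\alpha$ into $\beta$ lie in a common region (immediately contradicting part~(8) of Lemma~\ref{lem: properties of curves}), or they lie in the two distinct regions on opposite sides of that edge, which have opposite colors. In the latter case $\beta$ must undergo an odd number of color changes, but $\bbl{\beta} = 2$ is even, a contradiction.

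For $k \in \{1,2\}$, I would examine the opposite curve. Let $\beta^\ast \subset P^-$ be the arc opposite to $\beta$, and let $c^\ast \in \calC^-$ be the curve containing it. Then $(\bbl{\beta^\ast}, \bdr{\beta^\ast}) = (2,0)$, so $\beta^\ast$ itself does not belong to any of the families making up $\calK$. Because $S$ is a taut torus or annulus, $\chi'(c^\ast) = 0$, and so $c^\ast$ must fit one of the cases of Lemma~\ref{lem: classification of chi'=0}. Lemmas~\ref{lem: no tilde} and~\ref{lem: no kappa in K_2,1} rule out $\tild\calK$ and $\calK_{2,1}$ subarcs, leaving three possibilities for $c^\ast$: (i) $\bbb{c^\ast} = 4$ with no $\calK$ subarc, so $c^\ast \in \calC_{4,0}\cup \calC_{2,2}$; (ii) $c^\ast$ contains a single subarc in $\calK_{3,0}$, whose global shape is pinned down by Lemma~\ref{lem: kappa in K_{3,0}}; or (iii) $c^\ast$ contains a single subarc in $\calK_{4,0}$, whose global shape is pinned down by Lemma~\ref{lem: kappa of K_{4,0}}.

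In each resulting configuration I would derive a contradiction by combining the Corollary classifying components of $S \cap T \cap P^\pm$ into Types~(0),~(1),~(2) with the 3-highly-twisted hypothesis (under which a Type-(0) arc through a twist box must meet at least three bubbles, while $\bbl{\beta} = 2$); tautness property~(5) of Lemma~\ref{lem: properties of curves}, which forbids a curve from passing the same bubble twice on the same side of $L$; and the twist-reduced condition of Remark~\ref{rem: Plats are twist reduced}, which forbids a short closed curve from enclosing an integer $2$-tangle. The main obstacle is the sub-case in which $c^\ast$ itself lies in $\calC_{2,2}$: here $c$ and $c^\ast$ are mirror-image $\calC_{2,2}$-curves sharing the same two bubbles, and the contradiction must combine the simultaneous constraints on $\alpha \subset L \cap P^+$ and $\alpha^\ast \subset L \cap P^-$ near those bubbles to force either a sub-diagram incompatible with a 3-highly-twisted plat or an explicit tautness-violating isotopy of $S$.
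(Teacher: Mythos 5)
Your opening parity argument for the case where $\alpha$ passes over no crossing is sound and matches the paper's first step, and your idea of passing to an opposite curve and invoking Lemma~\ref{lem: classification of chi'=0} together with Lemmas~\ref{lem: no tilde} and~\ref{lem: no kappa in K_2,1} is a reasonable setup. But the proposal stops exactly where the real content of the lemma begins. The sub-case you yourself flag as ``the main obstacle'' --- $c$ and its opposite curve both in $\calC_{2,2}$, sharing the same two bubbles --- is not a degenerate corner case: it is precisely the local picture produced by a surface spiralling between the two strands of a twist region, and locally it is perfectly consistent with tautness, with the diagram being $3$-highly twisted, and with the twist-reduced condition. No combination of ``Type (0)/(1)/(2) intersections, tautness~(5), and twist-reducedness'' applied to that single curve pair will yield a contradiction; saying the contradiction ``must combine the simultaneous constraints'' is a statement of intent, not an argument. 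The same worry applies to your case (i) when $c^\ast\in\calC_{4,0}$: you cannot invoke the later classification of $\calC_{4,0}$ curves (Lemma~\ref{lem: c in C4}), since in the paper its proof uses the emptiness of $\calC_{2,2}$, and you give no substitute.

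What is missing is a mechanism for turning the local picture into a genuine violation, and the paper supplies two ingredients you do not have. First, a reduction step: one shows that if $\calC_{2,2}\neq\emptyset$ there is a curve whose bridge $\alpha$ passes over exactly one crossing and whose arc $\beta$ \emph{crosses} a single twist box (this is where the opposite-curve analysis and Remark~\ref{rem: c12 passes through top or bottom} are actually used). Second, and crucially, a localization-plus-innermost argument: a small pushout of such a curve is a curve of the kind appearing in the definition of twist-reduced, so the whole subdiagram it bounds --- hence both $\alpha$ and $\beta$ --- lies in one twist box $T$; since $S$ in normal form runs through all bubbles of $T$ and every other bridge of $T$, one can pick an \emph{innermost} such curve, and for that curve $\alpha$ and $\beta$ pass through the same bubble, contradicting tautness (condition~(7) of Lemma~\ref{lem: properties of curves}, the configuration of Figure~\ref{reducing bubbles b}). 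Without this innermost/extremal step (or an equivalent global argument), the mirror-$\calC_{2,2}$ configuration cannot be excluded, so as written the proposal has a genuine gap rather than a complete alternative proof.
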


\begin{proof}
For any curve $c$ in $\mathcal{C}_{2,2}$.
Consider the subarcs $\alpha=c\cap L$ and $\beta=c\ssm \alpha$.

The two endpoints of a small extension of $\alpha$ on $c$ must be contained in regions of the same color as the rest of $c$ contains exactly two bubbles. As $S$ is taut, these 
points cannot be in the same region. Thus the arc $\alpha$ passes over at least one 
crossing of the projection diagram.

\begin{claim} 
If $\mathcal{C}_{2,2}$ is nonempty then there exists a curve $c\in\mathcal{C}_{2,2}$ such that $\alpha$ passes over one crossing, and $\beta$ crosses 
one twist box.
\end{claim}

\begin{proof}[Proof of claim]\hfill

\noindent \underline{Case 1.} Assume first, that $\alpha$ passes over two crossings. Following $\beta$ from one of its 
endpoints, $p,q$, we meet a twist box either from its side or its top/bottom. 

If, following $\beta$ from either endpoint, we meet a twist box from its top/bottom, then $\beta$ meets two different twist boxes. 
Consider the curve $c'$ opposite to $c$ sharing the arc of $\beta$ between the bubbles. The curve 
$c'$ crosses the same two twist boxes. The diagram is twist-reduced and thus $c'\notin \calC_{4,0}$ as otherwise it would bound a reducing subdiagram. 
Therefore, we must have $\chi'(c')<0$, a contradiction. 

Thus, following $\beta$ from one of its endpoints, say $p$, $\beta$ meets a twist box $T$ from the side. We claim that one of the two curves opposite to $c$ containing $p$ or $q$ must cross $T$:
Let $c'$ be a curve opposite to $c$ which contains $p$. If $c'$ crosses $T$, we are done. Thus assume that  $c'$ turns at $T$ and exists through, say, its \emph{bottom}. This implies that $\beta$ must cross $T$ and pass through its two bottom bubbles. 
Therefore, the curve $c''$ opposite to $c$ containing $q$ crosses $T$ and passes through its second and third bubbles (counted from the bottom). 

Let $c'$ be a curve opposite to $c$ containing an endpoint of $\beta$ and crossing $T$, as in the previous paragraph.
As $c'$ passes through at least two bubbles and has at least two intersection points, it must be in $\calC_{2,2}$ by Lemma \ref{lem: classification of chi'=0} and Lemma \ref{lem: no kappa in K_2,1}. 
Since $c\cap L$ passes over two crossings, the arcs $c\cap L$ and $c'\cap L$ are as the arcs $\alpha'$ and $\alpha_1$ in Figure \ref{fig: two twist boxes and alpha}. In particular, $c'\cap L$ passes over one crossing and therefore, $c'$ is the required curve.

\medskip

\noindent \underline{Case 2.} Now assume that $\alpha$ passes over one crossing. Repeating the argument above 
we conclude that there is a curve $c'\in\calC_{2,2}$ opposite to $c$ that crosses a twist box. 
If $\alpha'=c'\cap L$ passes over two crossings, we are back to Case 1. Otherwise, it passes 
over one crossing and we are done.
\end{proof}

Let $c$ be a curve as in the claim.  A small pushout of $c$ bounds a subdiagram of $L$ as in the definition of a twist-reduced diagram (Definition \ref{def: twist reduced}). Since 
$L$ is twist-reduced, the subdiagram must be contained in a twist box $T$. Thus, both $\alpha$ and 
$\beta$ are contained in $T$. Since $S$ passes through all bubbles in $T$, and every other 
bridge of $T$, there exists an innermost curve such that both $\alpha$ and $\beta$ pass 
through the same bubble. However, this implies that $S$ is not taut in contradiction.
\end{proof}

\begin{lemma}\label{lem: c in C4}
Every curve $c'\in\calC_{4,0}$ is of one of the forms depicted in 
Figure \ref{fig: C4}.
\end{lemma}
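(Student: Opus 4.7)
The plan is to perform a careful case analysis based on how the four bubbles met by $c'$ are distributed among the twist boxes of the plat, and on how $c'$ traverses the complementary regions of the template between them.

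First I would note that since $\bdr{c'}=0$, the curve $c'$ lies entirely in either $P^+$ or $P^-$ and decomposes as a cyclic concatenation of four arcs, each passing through exactly one bubble $B_1,\ldots,B_4$ and lying in the chosen hemisphere minus $L$ between consecutive bubbles. By Lemma \ref{lem: properties of curves}(5), these four bubbles are distinct. Since each color-change between complementary template regions along $c'$ requires passage through a bubble or an intersection with $L$ (Remark \ref{rem: Intersect or bubble}), the four bubbles account for exactly four color-changes, so $c'$ visits exactly four complementary regions of $P\ssm\mathcal{T}$, alternating in color.

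Next I would analyse locally how $c'$ passes between two consecutive bubbles $B_i,B_{i+1}$. Since there is no intersection point between them, the corresponding subarc of $c'$ stays in a single face of $P\ssm L$. Either $B_i$ and $B_{i+1}$ lie in the same twist box, or they lie in two different ones. In the first case, the $3$-highly twisted hypothesis together with Lemma \ref{lem: properties of curves}(5) forces $B_i$ and $B_{i+1}$ to be vertically consecutive bubbles in the twist box, since skipping an intermediate crossing would require the arc to cross a strand of $L$ and thus introduce an intersection point. In the second case, the arc between them lies in a single template face, and by Observation \ref{obs: single arc} together with the twist-reduced property (Remark \ref{rem: Plats are twist reduced}), the two twist boxes must be adjacent across a common short arc of $L$ in the template, and the connecting subarc of $c'$ runs alongside this short arc.

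Combining these local constraints, I would then enumerate the global configurations: the cyclic sequence $(B_1,B_2,B_3,B_4)$ yields a closed path in the template visiting either one twist box, two adjacent twist boxes, or more. Using Lemma \ref{lem: properties of curves}(6) to rule out bubble-reducing disks, and using Lemma \ref{lem: classification of chi'=0}(\ref{lem: chi'=0 case no calK}) together with Lemma \ref{lem: no tilde} to ensure $c'$ contains no subarc of $\calK$, most candidate patterns collapse into the two shapes shown in Figure \ref{fig: C4}: namely, a curve encircling a single arc of $L$ joining two twist boxes in adjacent rows (passing through two bubbles in each), and a curve encircling a ``strand'' near a cap of the plat (passing through four bubbles distributed between two adjacent twist boxes in a boundary region).

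The main obstacle will be the combinatorial bookkeeping in this case analysis: there are many a priori placements for the four bubbles (same-box versus cross-template transitions, interior versus boundary twist boxes, and orientations relative to the corners of the plat), and each must either be identified with one of the target figures or eliminated. I expect configurations whose bubbles sit in twist boxes far from the boundary to either force $c'$ to bound a subdiagram violating twist-reducedness, or to force $c'$ to contain an opposite arc in $\calK_{3,0}$ or $\calK_{4,0}$ (which, together with the conclusion of Lemma \ref{lem: no tilde}, would place $c'$ outside case (\ref{lem: chi'=0 case no calK}) of Lemma \ref{lem: classification of chi'=0} and contradict $c'\in\calC_{4,0}$), leaving precisely the two forms of Figure \ref{fig: C4}.
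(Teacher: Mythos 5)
Your opening step contains a genuine error that undermines the whole enumeration: you claim that by Lemma \ref{lem: properties of curves}(5) the four bubbles met by $c'$ are distinct. That item only forbids passing twice through the same bubble \emph{on the same side of $L$}; a curve in $\calC_{4,0}$ may well pass through the same bubble twice on opposite sides of the strand, and such curves are not excluded by tautness --- indeed they are among the allowed configurations of Figure \ref{fig: C4} (the paper's proof explicitly treats ``$c'$ passes twice through the same bubble'' as already being of the desired form). Relatedly, you describe the target as ``two shapes,'' while the figure lists three. So even if your case analysis were carried out, it would be an enumeration of the wrong set of configurations and could not yield the statement as it stands.

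The second, independent problem is that the elimination of the remaining configurations --- which is the entire content of the lemma --- is deferred as ``combinatorial bookkeeping'' rather than done. The paper does not proceed by brute-force enumeration of bubble placements: it argues by contradiction, first replacing a curve that only turns at twist boxes by an opposite curve that crosses one (using that $\calC_{2,2}=\emptyset$ and that such an opposite curve carries no arc of $\calK$), then taking an \emph{innermost} offending curve and analyzing the disk it bounds. The hard cases (a bubble of a twist box inside the innermost disk, and the configuration where $c'$ crosses one twist box $U$ and turns at two others $V,W$) are killed by repeatedly passing to opposite curves $c^*$, $c^{**}$ and deriving either a return to the ``same bubble twice'' case or an infinite nested family of curves through two fixed bubbles, using twist-reducedness and the checkerboard coloring. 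Your sketch has no mechanism playing the role of this innermost/descent argument; in particular it does not address curves that only turn at twist boxes, nor the $U,V,W$ configuration, and the appeal to Observation \ref{obs: single arc} (which concerns bridges of $L$, not arcs of $c'$) does not substitute for it. As written, the proposal is a plan with a false first reduction and the essential argument missing.
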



\begin{figure}[ht]
    \centering
   \begin{overpic}[height=3cm]{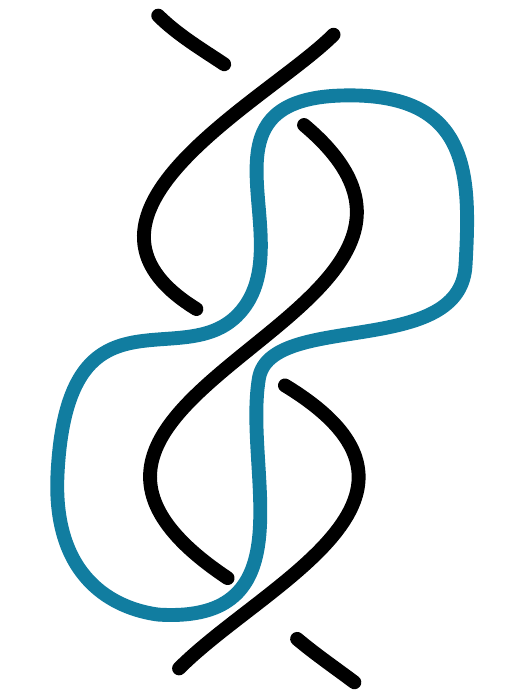}
\put(65,40){$c$}
\end{overpic}
\hskip 1cm
   \begin{overpic}[height=4cm]{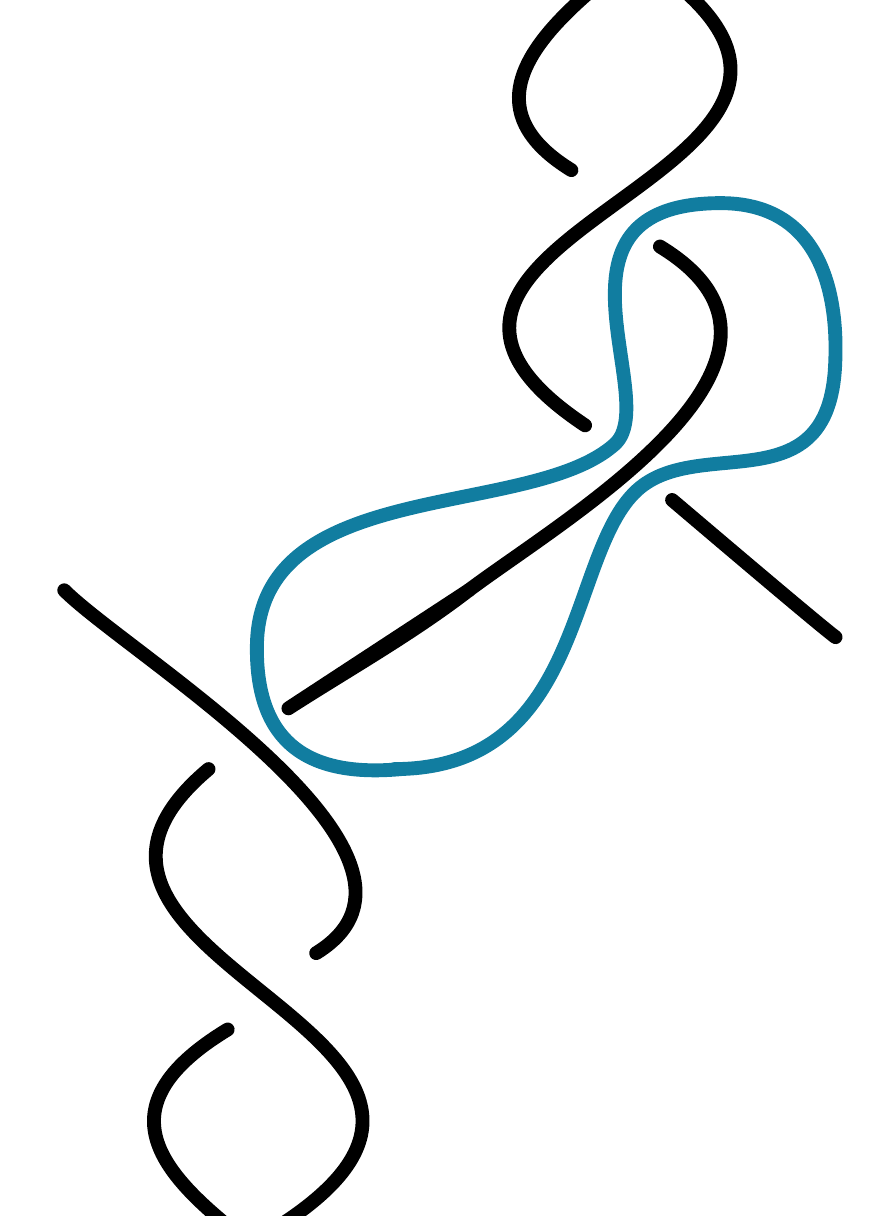}
\put(40,30){$c$}
\end{overpic}
\hskip 1cm
   \begin{overpic}[height=4cm]{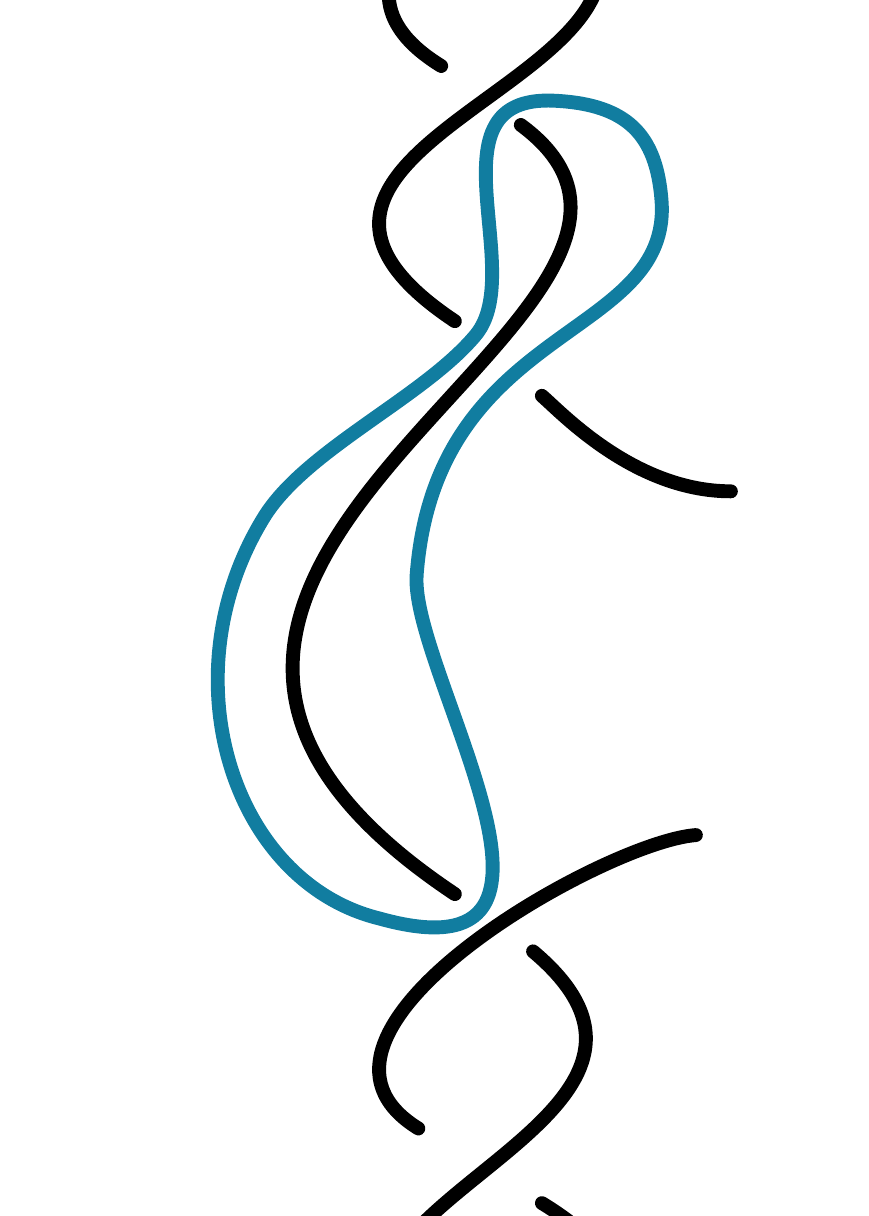}
\put(40,35){$c$}
\end{overpic}
\caption{The possible cases of a curve in $\calC_{4,0}$}  
    \label{fig: C4}
\end{figure}

\begin{proof}
Assume by contradiction that  $c'\in \calC_{4,0}$ and it is not of the form in Figure \ref{fig: C4}.
The curve $c'$ passes through four bubbles which are divided between at most four twist boxes.
Assume that $c'$ only turns at twist boxes. Consider one of the turns of $c'$ at a twist box $T$. 
Such a curve has a curve $c''$ opposite to it which crosses the twist box. The curve $c''$ must also
be in $\calC_{4,0}$ as it cannot be of any of the types formerly investigated: $\calC_{2,2}$ 
is empty and furthermore $c''$ cannot have an arc in $\calK$ since $c''$ is opposite to an arc in $\calC_{4,0}$ 
and not to $\calC_{\pos}$. 

In addition, $c''$ cannot be of one of the forms in Figure \ref{fig: C4}: Otherwise one of the arcs $c'\cap c''$ continues in $c'$ to cross a twist box, in contradiction to the assumption that $c'$ only turns.
By replacing $c'$ 
by $c''$, if need be, we assume from now on that 
\begin{enumerate}
\item[(i)] the curve $c'$ is a curve in $\calC_{4,0}$ which crosses a twist box, and
\item[(ii)] $c'$ is not of one of the forms in Figure \ref{fig: C4}.
\end{enumerate}

Let $c'$ be an innermost such curve. Let $D$ be the innermost disc bounded by $c'$. Consider the 
intersection of $D$ with the twist boxes meeting $c'$. Since the diagram is twist-reduced, 
the curve $c'$ cannot cross two different twist boxes, this leaves us with three cases:
\vskip7pt
\noindent \underline{Case 0.} The curve $c'$ passes twice through the same bubble. Then it is of the 
desired form, in contradiction to the assumption.
\vskip7pt
\noindent \underline{Case 1.} The intersection of one of these twist boxes with $D$ contains a (projection of a) bubble 
that is not met by $c'$. Let $B,B'$ be adjacent bubbles of a twist box so that $B$ is contained in $D$ 
and $c'$ passes through $B'$, see Figure \ref{fig: C4 case 1}. There must be a curve $c''$, on the same side of $P$ as $c'$, passing through the bubbles $B$ and $B'$, as follows from Figure \ref{fig: three types of intersection}. Since $c'$  is innermost, and the curve $c''$ crosses the twist box. By the previous lemmas, namely Lemmas \ref{lem: classification of chi'=0}, \ref{lem: no tilde}, \ref{lem: kappa in K_{3,0}}, \ref{lem: kappa of K_{4,0}}, and \ref{lem: no kappa in K_2,1}, it is in one of the forms shown in Figures \ref{fig:C6a}, \ref{fig:C6}  or 
\ref{fig: C4}. In all of the cases, $c''$ contains an arc outside the twist box, connecting $B$ and $B'$. 
There are two cases to consider, either $c'$ crosses the twist box, or turns at the twist box, see 
Figure \ref{fig: C4 case 1}.  If $c'$ crosses, the curve $c^*$ opposite to $c'$ (and $c''$), 
shown in  Figure \ref{fig: C4 case 1}, passes through four bubbles, it must therefore close up 
without passing through any additional bubbles, which would imply that $c'$ passes through the 
same bubble $B'$ twice, in which case we are done by case 0. 
If $c'$ turns, then the curve $c^*$, passes three times through two bubbles in the
twist box, and in  order to close up, must turn at another twist box at a bubble $B^*$, it follows 
that $c'$ passes through the bubble $B^*$ twice. In both cases, we are done by Case 0.


\begin{figure}[ht]
    \centering
   \begin{overpic}[height=4cm]{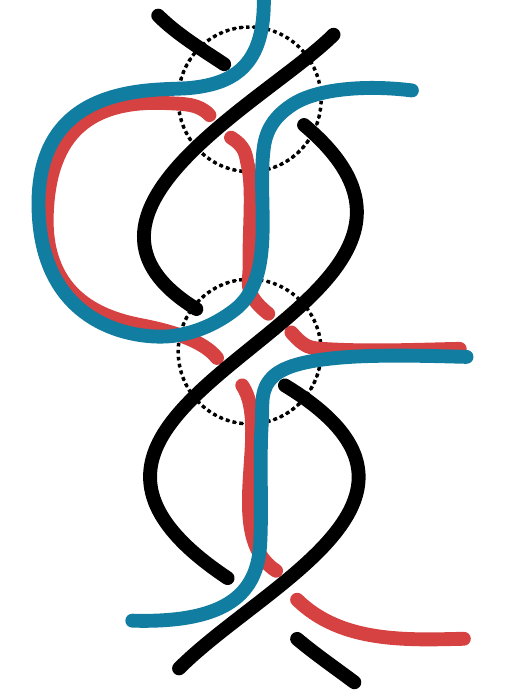}
\put(69,7){$c^*$}
\put(10,7){$c'$}
\put(63,84){$c''$}
\put(5,36){$D$}
\end{overpic}
\hskip 1cm
   \begin{overpic}[height=4cm]{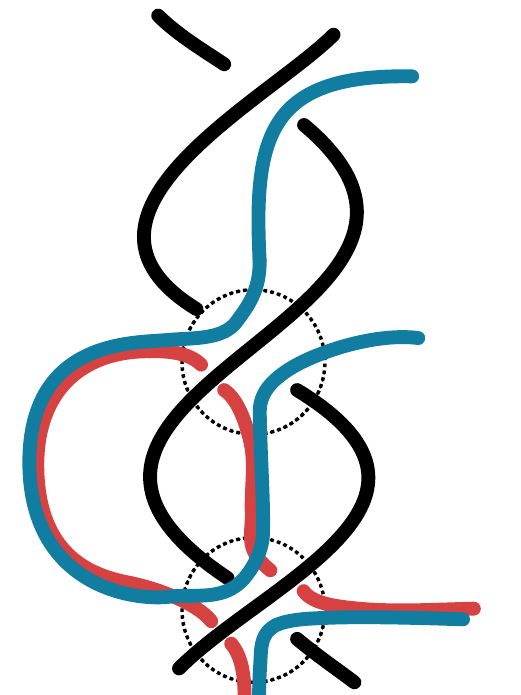}
\put(69,12){$c^*$}
\put(69,3){$c'$}
\put(63,87){$c''$}
\put(5,75){$D$}
\end{overpic}
\caption{The curve $c'$ crosses the twist box on the left, and turns on the right. The minimal disk $D$ whose boundary is $c'$ is shown in gray.}  
    \label{fig: C4 case 1}
\end{figure}

\vskip7pt

We are left with the following case:

\vskip7pt
\noindent \underline{Case 2.} The disc $D$ does not contain a bubble 
in a twist box that $c'$ meets, and $c'$ does not pass twice through the same bubble. Therefore, $c'$ 
must cross once some twist box, which we denote by $U$, and turn at two other twist boxes which we denote by $V$ and $W$ (see Figure~\ref{fig: special case of C4}. Let $B,B'$ be the two adjacent bubbles in $U$ through which $c'$ crosses. 
As we are not in Case 1, the innermost disk bounded by $c'$ contains no bubbles of $U$, $V$ or $W$. Thus, one of the bubbles $B,B'$, say $B$, is extremal in the twist box $U$, meaning its the first bubble in $U$ counted from the top or bottom. Let 
$\tau$ be the subarc of $c'$ exiting $B'$, and connecting $U$ to one of $V,W$, say $V$.  If $\tau$ enters 
$V$ from the side, then opposite to $\tau$ there is a curve $c^*$ crossing the twist boxes $U$ and $V$, which we have shown 
can not exist because $L$ is twist-reduced. Thus, $\tau$ must enter $V$ from the bottom or the top. A similar argument shows 
that the subarc $\eta$ of $c'$ connecting $V$ to $W$, must enter $W$ from the top 
or the bottom. Thus, the subarc $\mu$ of $c'$ connecting $U$ and $W$, emerges from the side of $U$ and ends in the side $W$
as depicted in Figure \ref{fig: special case of C4}.

\begin{figure}[ht]
\bigskip
    \centering
   \begin{overpic}[height=5cm]{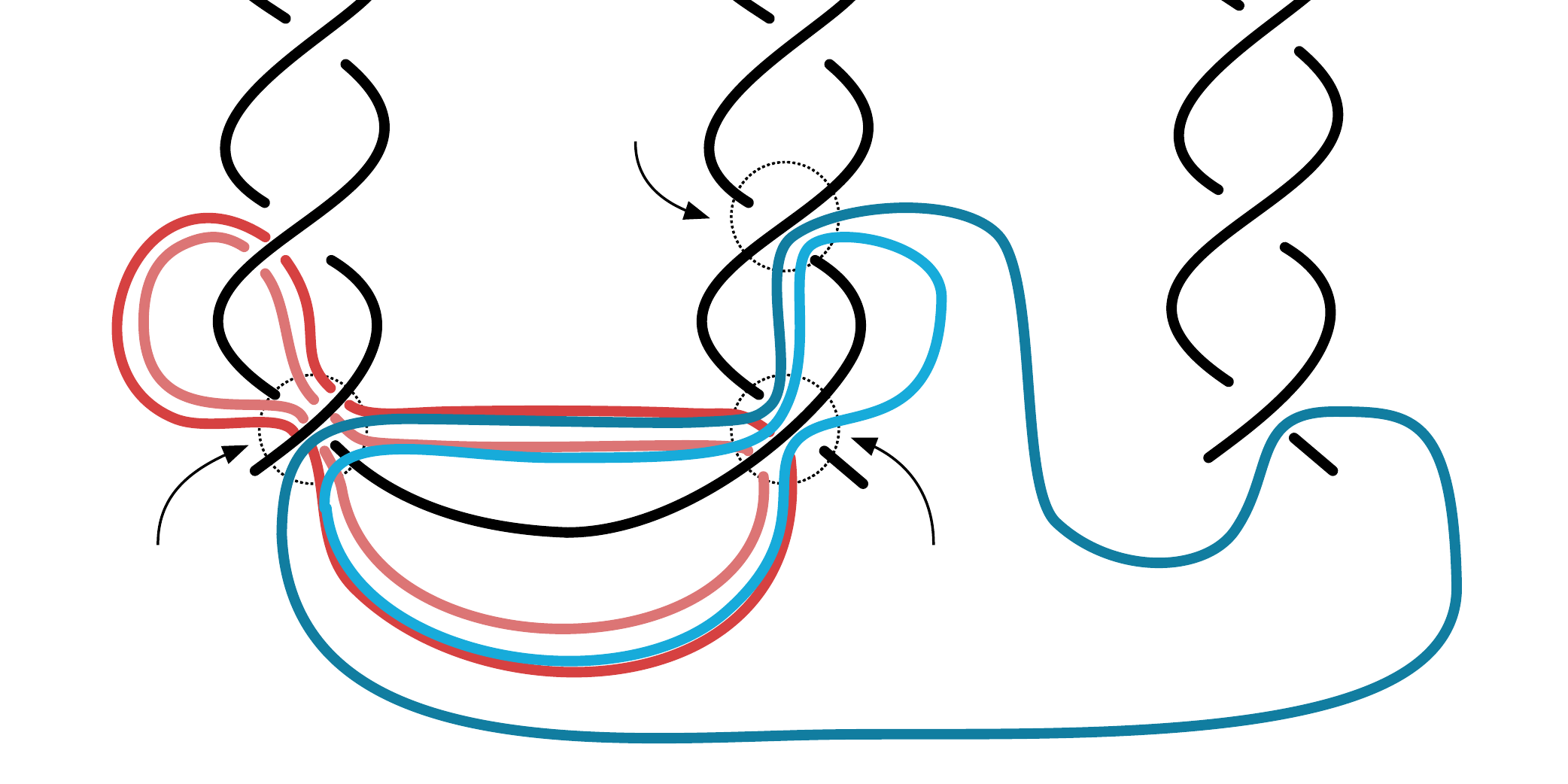}
\put(18,50){$W$}
\put(50,50){$U$}
\put(80,50){$V$}
\put(38,42){$B'$}
\put(58,11){$B$}
\put(8,11){$B^*$}
\put(70,7){$D$}
\put(66,33){$\tau$}
\put(94,11){$\eta$}
\put(33,26){$\mu$}
\put(58,22){$c^{**}$}
\put(3,27){$c^{*}$}
\put(37,11){$c^{***}$}
\put(33,17.5){$\vdots$}
\put(33,12){$\vdots$}
\end{overpic}
\caption{The only possible configuration in Case 2} 
     \label{fig: special case of C4}
\end{figure}


Let $c^*$ be the curve opposite to $c'$ along $\mu$. The curve $c^*$ passes through three bubbles, one in $U$ and 
two in $W$, and must close after an additional turn. By the checkerboard coloring, the additional turn must 
be at the bubble $B^*$ at the bottom of $W$, as shown in Figure \ref{fig: special case of C4}.
Opposite to $c^*$ at $B^*$, there is another curve $c^{**}$ which passes through $B^*$ parallel to $c'$.
By the previous lemmas, as in case 1, $c^{**}$ in $\calC_{4,0}$.  If it is in $\calC_{4,0}$, one can check that 
in order to close up $c^{**}$ must cross $U$ parallel to $c'$. Iterating this argument gives an infinite collection of nested curves intersecting 
$B$ and $B^*$, which is a contradiction.
This finishes the proof of the lemma.
\end{proof}

\vskip15pt
\section{No tori  and no annuli}
\vskip8pt
In this section we prove Theorem \ref{thm: boundary parallel} and Theorem 
\ref{thm: highly twisted plats are hyperbolic}

\vskip5pt

\begin{lemma}\label{lem: desired implies prallel}
 If $S$ is a taut torus or annulus and if all the curves in $\calC_{\le 0}$ are of the form depicted in Figures \ref{fig:C6a}, \ref{fig:C6} and \ref{fig: C4}  
 then $S$ is a boundary parallel torus.
\end{lemma}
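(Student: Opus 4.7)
The plan is to show that the very restricted combinatorial form of the intersection curves forces $S$ to be isotopic to the boundary of a regular neighborhood of a sub-link of $L$. Since $S$ is connected this sub-link is a single component $L_0 \subset L$, hence $S$ is a torus parallel to $\partial \NN(L_0)$; the annulus case is eliminated along the way.

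First I would analyse each admissible curve shape locally. A curve $c' \in \calC_{4,0}$ of the form in Figure \ref{fig: C4}(a) is contained in a neighborhood of a single twist region, and the disk $S_{c'} \subset H^\pm$ can be isotoped (rel $\partial$) into a small product neighborhood of an arc of $L$ passing through that twist box. The two ``square'' shapes in Figure \ref{fig: C4}(b),(c) bound disks in $H^\pm$ which can be pushed onto a neighborhood of a single strand of $L$ threading the four adjacent bubbles. The curves in Figures \ref{fig:C6a} and \ref{fig:C6}, which contain an arc $\kappa \in \calK_{4,0}$ or $\calK_{3,0}$ together with its parallel bridge-arc $\alpha$, bound disks in $H^\pm$ which similarly isotope onto a tubular neighborhood of $\alpha$. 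The key point in each case is that $S$ is taut, so no such disk can be forced to wrap around a bubble more than necessary or to contain extra crossings.

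Next I would check how the local pieces glue. Every bubble met by $S$ is met in two arcs (one in $P^+$, one in $P^-$), each lying on a distinct curve of $\calC$ by Lemma \ref{lem: properties of curves}(5), and similarly every intersection point of $S$ with $L$ lies on exactly two curves. After identifying each $S_c$ with a ribbon along its associated arc of $L$, these identifications match pairwise along the bubbles (which correspond to the meridional disks of $L$ at crossings) and along the arcs of $L$. Consequently the union $\bigcup_c S_c$, together with the pieces of $S$ inside the bubbles and the Type~(2) disks inside twist regions, reconstitutes $S$ as an embedded surface in a regular neighborhood $\NN(L')$ of a sub-link $L' \subseteq L$. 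Being a closed surface without compressions and contained in $\NN(L')$, $S$ must be isotopic to $\partial\NN(L')$; connectedness then forces $L' = L_0$ to be a single component.

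Finally, this identification rules out the annulus case. An annulus would have to be a meridional annulus on $\partial\NN(L_0)$, giving two meridional boundary components on $L$, contradicting the assumption that $S$ is in normal position with $i|_{\partial S}:\partial S \to L$ a covering map and no meridional components (Definition \ref{def: normal position}). Hence $S$ is a torus, and it is parallel to $\partial \NN(L_0)$.

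The main obstacle is the bookkeeping of the gluing in the second step: one must verify that, as one traverses $S$ from curve to adjacent curve across a shared bubble or shared intersection point, the local ribbon-pictures of $S_c$ and $S_{c''}$ line up consistently to form a single tube around an arc of $L$, and that no ``branching'' can occur. This requires examining each of the three forms in Figure \ref{fig: C4} together with the two forms in Figures \ref{fig:C6a}, \ref{fig:C6}, and observing that at each bubble the two curves meeting it come from the two sides of the same strand of the tangle, so the tube extends uniquely across.
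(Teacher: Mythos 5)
Your proposal is essentially the paper's argument, expanded: the paper's proof simply caps each of the allowed curves by the disk it bounds in $H^\pm$ and observes that the resulting surface is a tube following a single component of $L$, hence a boundary-parallel torus, which is exactly your local-ribbon-plus-gluing analysis in more detail. Your elimination of the annulus case also amounts to the same underlying point (all admissible curves have no intersection points with $L$, so $\partial S=\emptyset$), so the two proofs coincide in substance.
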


\begin{proof} 
Let $S$ intersect $P^{\pm}$ only in curves as in Figures \ref{fig:C6a}, \ref{fig:C6} and \ref{fig: C4}. 
The surface $S$ is obtained by capping each curve in $P^\pm$ by the disk it bounds in $H^\pm$ 
 respectively. Thus, $S$ is a torus following one of the components 
 of $L$. That is, $S$ is boundary parallel.
\end{proof}

\begin{corollary}\label{cor: atoroidal} The link complement $S^3 \ssm \NN(L$) does not
contain essential tori. In particular, the link $L$ is prime.
\end{corollary}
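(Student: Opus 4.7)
The plan is to invoke the classification built up in Sections~\ref{sec: Curves and the Euler Characteristic} and~\ref{sec: Analysing the curves}. Suppose, for contradiction, that $S \subset S^3 \ssm \NN(L)$ is an essential torus. Since $L$ is non-split by Corollary~\ref{cor: nonsplit}, case (ii) of Lemma~\ref{lem: properties of curves} applies to a complexity-minimizing representative of the isotopy class of $S$, and Lemma~\ref{lem: normal form} places it in normal position; thus we may take $S$ to be taut.

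Since $S$ is a torus, $\chi(S)=0$. Combining Lemma~\ref{lem: summing chi' gives Euler char} with Lemma~\ref{lem: non-positive contribution of chi'} then forces every summand $\chi'(c')$ to vanish. Lemma~\ref{lem: classification of chi'=0} narrows down the possible shapes of each $c' \in \calC_{\np}$, and the refinements in Lemmas~\ref{lem: no tilde}, \ref{lem: kappa of K_{4,0}}, \ref{lem: kappa in K_{3,0}}, \ref{lem: no kappa in K_2,1}, \ref{lem: No C22}, and \ref{lem: c in C4} together pin every such curve to one of the three shapes depicted in Figures~\ref{fig:C6a}, \ref{fig:C6}, and~\ref{fig: C4}. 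Lemma~\ref{lem: desired implies prallel} then yields that $S$ is boundary parallel, contradicting essentiality, so no essential torus exists.

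For primeness, assume for contradiction that $L = L_1 \# L_2$ with both summands non-trivial. Since $L$ is non-split, the decomposition sphere $\Sigma \subset S^3$ meets $L$ transversally in two points, and the boundary $T$ of a regular neighborhood in $S^3 \ssm \NN(L)$ of the annulus $\Sigma \cap (S^3 \ssm \NN(L))$ is a torus in the complement. Non-triviality of both $L_1$ and $L_2$ ensures that $T$ is essential: it separates the exterior into two pieces, each containing a non-trivial link summand and hence neither a solid torus, so $T$ is incompressible and non-boundary-parallel. This contradicts the atoroidality just established, so $L$ is prime.

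The heavy lifting has already been done by the preceding lemmas; the corollary itself is essentially the wrap-up. The only non-trivial conceptual step outside this paper is the standard passage from an essential decomposition sphere of a composite non-split link to an essential torus in the exterior, used in the primeness argument.
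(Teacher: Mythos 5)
Your atoroidality argument is correct and is essentially the paper's own proof: pass to a taut, complexity-minimizing representative (using Corollary~\ref{cor: nonsplit} so that case (ii) of Lemma~\ref{lem: properties of curves} applies), use $\chi(S)=0$ with Lemmas~\ref{lem: summing chi' gives Euler char} and~\ref{lem: non-positive contribution of chi'} to force $\chi'(c')=0$ for every curve, invoke the classification lemmas to pin each curve to Figures~\ref{fig:C6a}, \ref{fig:C6}, \ref{fig: C4}, and conclude boundary-parallelism from Lemma~\ref{lem: desired implies prallel}, a contradiction. That part needs no changes.

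The primeness paragraph, however, contains a genuine error. The torus you construct is $T=\partial N(A)$ where $A=\Sigma\cap(S^3\ssm\NN(L))$ is the twice-punctured decomposition sphere, and this torus is \emph{not} essential: one of its complementary pieces in the exterior is the neighborhood $N(A)\cong A\times[-1,1]$ itself, a solid torus whose meridian disk (a spanning arc of $A$ times $[-1,1]$) is disjoint from $L$, so $T$ is compressible. Your separation claim also fails: the two tangle exteriors coming from the two sides of $\Sigma$ lie on the \emph{same} side of $T$ (they are connected to each other through the region between $T$ and $\partial\NN(L)$), while the other side is just the solid torus $N(A)$; so it is not true that each side contains a non-trivial summand. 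The standard object you want is the swallow--follow torus: writing $B_1,B_2$ for the balls bounded by $\Sigma$, take $T=\partial N\bigl(B_1\cup(L\cap B_2)\bigr)$, equivalently the torus obtained from $A$ by tubing its two boundary circles along an annulus in $\partial\NN(L)$ on one side only. For that torus, non-triviality of both factors (together with irreducibility of the exterior, i.e.\ Corollary~\ref{cor: nonsplit}) gives incompressibility and non-boundary-parallelism; this is the classical fact the paper leaves implicit behind ``in particular, $L$ is prime.'' With that replacement your proof of the primeness clause goes through; also note that an argument is needed to rule out compressibility beyond ``neither side is a solid torus'' (in an irreducible exterior a compressible separating torus bounds a solid torus or lies in a ball, and both possibilities must be excluded).
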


\begin{proof} 
Let $S$ be an essential torus in $S^3 \ssm \NN(L)$. 
By Lemma \ref{lem: properties of curves}, we may assume that $S$ is taut. 
By Lemmas \ref{lem: no tilde}, \ref{lem: kappa of K_{4,0}}, \ref{lem: kappa in K_{3,0}}, and 
\ref{lem: c in C4}, all the curves in $\calC$ are of the form depicted in Figures \ref{fig:C6a}, 
\ref{fig:C6} and \ref{fig: C4}.  Hence, by Lemma \ref{lem: desired implies prallel}, the torus 
$S$ is boundary parallel which contradicts the choice of $S$.
\end{proof}

\begin{proposition}\label{pro: unannular}
The link complement $S^3 \ssm \NN(L$) does not
contain essential annuli.
\end{proposition}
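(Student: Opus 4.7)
The plan is to argue by contradiction, closely following the structure of the proof of Corollary~\ref{cor: atoroidal}. Suppose $S\subset S^3\ssm\NN(L)$ is an essential annulus, which by Lemma~\ref{lem: properties of curves} we may take to be taut. Since $\chi(S)=0$, Lemmas~\ref{lem: summing chi' gives Euler char} and \ref{lem: non-positive contribution of chi'} together force $\chi'(c')=0$ for every $c'\in\calC_{\np}$, so each such curve falls into one of the types listed in Lemma~\ref{lem: classification of chi'=0}.

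Applying the same chain of reductions already used for the toroidal case, namely Lemmas~\ref{lem: no tilde}, \ref{lem: kappa of K_{4,0}}, \ref{lem: kappa in K_{3,0}}, \ref{lem: no kappa in K_2,1}, \ref{lem: No C22}, and \ref{lem: c in C4}, eliminates every case in Lemma~\ref{lem: classification of chi'=0} except the curves shown in Figures~\ref{fig:C6a}, \ref{fig:C6}, and \ref{fig: C4}, together with curves in $\calC_{0,4}$, which fall into case~(\ref{lem: chi'=0 case no calK}) of Lemma~\ref{lem: classification of chi'=0} but are not touched by any of the preceding lemmas. Because every curve depicted in the three figures has $\bdr{c'}=0$, while $\partial S\ne\emptyset$, at least one curve of $\calC$ must belong to $\calC_{0,4}$.

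The new step, analogous to Lemma~\ref{lem: c in C4}, is a case analysis of $\calC_{0,4}$. A curve $c\in\calC_{0,4}$ lies entirely in $P\ssm\calB$ and crosses $L$ in four points, so it bounds a disk $D\subset P$ which contains no crossings and meets $L$ in two arcs. Combining properties~(5) and~(8) of Lemma~\ref{lem: properties of curves} with the primeness and twist-reducedness of $D(L)$ (Remark~\ref{rem: Plats are twist reduced}), the admissible configurations of such a $c$ should reduce to a very short list; in each one the capping disk $S_c\subset H^\pm$ together with a neighborhood of the enclosed arcs of $L$ can be isotoped onto $\partial\NN(L)$.

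Once the possibilities for $\calC_{0,4}$ curves are pinned down, a variant of Lemma~\ref{lem: desired implies prallel} assembles the caps over all allowed curves, together with the portions of $\partial S$ on $L$ coming from the $\calC_{0,4}$ intersection points, into an annulus isotopic into $\partial\NN(L)$; this contradicts the essentiality of $S$. The main obstacle will be the $\calC_{0,4}$ case analysis: as in the proof of Lemma~\ref{lem: c in C4}, eliminating the configurations in the corners of the plat and those crossing twist boxes will require inspection of the opposite curve together with the twist-reduced condition, and the argument will likely bifurcate according to how the two arcs of $D\cap L$ are arranged relative to the twist boxes adjacent to $D$.
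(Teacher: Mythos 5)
Your reduction to the case where some curve of $\calC$ lies in $\calC_{0,4}$ is in the spirit of the paper, but the heart of your plan --- a diagrammatic case analysis of $\calC_{0,4}$ curves using tautness, primeness and twist-reducedness, followed by ``assembling the caps'' into an annulus parallel into $\partial\NN(L)$ --- does not go through, and this is exactly where the paper has to work hardest. A curve $c\in\calC_{0,4}$ arising from a Type (1) disk in a twist box (i.e.\ $\partial S$ running over a bridge in the \emph{middle} of a twist region) is locally unobjectionable: it bounds a crossing-free disk in $P$, violates neither tautness, primeness, nor twist-reducedness, and its cap is a perfectly good disk in $H^\pm$. The danger it signals is global, not local: for instance an annulus both of whose boundary curves lie on one component $l_1$ of $L$ and which ``swallows'' another component $l_2$ produces exactly such curves, and no amount of inspection of the subdiagram bounded by $c$ (or of opposite curves) can exclude it, since the same local picture occurs for links where such essential annuli genuinely exist. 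The paper's proof of this proposition is therefore not a variant of Lemma~\ref{lem: desired implies prallel}; after locating two bridges $\alpha,\beta$ met by a $\calC_{0,4}$ curve and showing they lie on different components of $\partial S$, it runs a three-dimensional argument: in the crucial sub-case it builds a disk $\Delta$ meeting $l_2$ once, forms the torus $U\subset\partial(\NN(S)\cup\NN(l_1))$, shows $U$ is incompressible on both sides, and then invokes Corollary~\ref{cor: atoroidal} (atoroidality, already established) together with Corollary~\ref{cor: nonsplit}, a lens-space argument, and boundary-compressions to reach a contradiction. Your outline never brings in these global inputs, so the step ``in each configuration the cap can be isotoped onto $\partial\NN(L)$'' is an unproved claim that is false as stated for the swallow--follow configuration.

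Two smaller gaps: first, your claim that some curve of $\calC$ must lie in $\calC_{0,4}$ ignores the Type (2) disks, which were deleted when forming $\widehat S$; the boundary of $S$ could a priori run only through Type (0)/(2) intersections with the twist boxes, and the paper disposes of this possibility (and of the related ``spiraling'' annuli, Sub-case 1.0) by a separate argument using primeness, not by the curve classification. Second, even granting a classification of $\calC_{0,4}$ curves, concluding that $S$ is boundary parallel requires controlling how consecutive caps in $H^+$ and $H^-$ glue along $\partial S\subset L$, e.g.\ distinguishing an annulus following one component of $L$ from one whose boundary wraps several times; this is why the paper argues via incompressibility/boundary-incompressibility and the auxiliary tori rather than via an assembling lemma.
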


\begin{proof} Let $S$ be an incompressible and boundary incompressible annulus.
Since by Corollary \ref{cor: atoroidal} $L$ is prime, we may assume that $\partial S$ 
has no meridional component.
By Lemma \ref{lem: properties of curves}, we may assume that $S$ is taut.

Assume first that $S$ passes through all twist boxes in disks of Type (0) or (2) only, 
as in Lemma \ref{lem: normal form}. Therefore, as $S$ is an annulus, 
there must be a twist box $T$ such that $S$ meets $T$ in a Type (2) disk. When $S$ emerges from $T$ it 
intersects $P$ in a curve $c\in\calC$. By Lemmas \ref{lem: no kappa in K_2,1} and \ref{lem: No C22} 
such a curve must be in $\calC_{0,4}$. The curve $c$ cannot meet a twist box as otherwise this 
twist box will meet $S$ in a disk of Type (1). Consider the disk $D$ in $P$ bounded by $c$. Either 
$\interior{D} \cap L = \emptyset$ in which case $S$ is not taut, or the diagram $D(L)$ is not 
prime,  a contradiction.

Therefore, there is a twist box $T$ which intersects $S$ in a disk of Type (1).
Since $\partial S$ contains a string of $T$, there is a curve $c$ in $\calC$ which passes through 
the ``middle'' of the twist box $T$ -- i.e., the intersection $c\cap L$ contains a bridge which does 
not meet the top and bottom of $T$. Let $\alpha$ be a small continuation  of this bridge 
in $c$.  Lemmas \ref{lem: no kappa in K_2,1} and \ref{lem: No C22}  rule out curves containing 
$\calK_{2,1}$ and curves in $\calC_{2,2}$. Thus, $c$ must be in $\calC_{0,4}$, and must 
contain another bridge.
Let $\beta$ be a small continuation of that bridge in $c$. 
The arcs $\alpha$ and $\beta$ belong to different components of $\partial S$: Otherwise, the arc on $c$
connecting $\alpha$ and $\beta$ together with $\partial S$ bounds a disk in $S$. An innermost curve 
$c\in\calC$ in this disk must have only two intersection points with $L$, which is a contradiction to 
Lemma \ref{lem: properties of curves}. The endpoints of $\alpha$ in $c$ belong to regions of the same 
color, hence the same holds for $\beta$. Note that a bridge in a plat passes at most two crossings. 
We divide the proof into cases depending on the number of crossings $\beta$ passes over.

\vskip7pt

\noindent\underline{Case 0:} If $\beta$ does not pass over any crossing. Then, as the endpoints of $\beta$ belong to regions of the same color, they must belong to the same region, contradicting the assumption that  $S$ is taut.

\vskip7pt

\noindent\underline{Case 1:} If $\beta$ passes over one crossing. A small pushout of the curve 
$c$ bounds a subdiagram of $L$ containing the two crossings points over which $\alpha$ and $\beta$ 
pass. Since we assumed that $L$ is twist-reduced these two crossing points must belong to the same 
twist box $T$. Let $n$ be the number of bridges of $T$ in-between $\alpha$ and $\beta$. We further 
divide into sub-cases according to $n$.
\vskip7pt
\underline{Sub-case 1.0:} $n=0$. First, if $\alpha$ and $\beta$ meet the same bridge of $L$ 
then $c$  bounds a boundary compression disk for $S$, which is a contradiction. So, assume that $\alpha$ 
and $\beta$ meet adjacent bridges of $S$. The annulus $S$ must thus spiral in-between the strands of 
$L\cap T$.  Thus we obtain a disk of Type 2 (as in Lemma \ref{lem: normal form}) and hence, by the definition of $\calC^+$ and $\calC^-$
as in the beginning of \S\ref{subsec: Curves of intersection} 
this curve does not appear in $\calC$.

\vskip7pt

 \underline{Sub-case 1.1:} $n=1$. The tangle $L\cap T$ has two components $\lambda_1,\lambda_2$. 
Let $l_1,l_2$ be the corresponding components of $L$ (possibly $l_1=l_2$). Because $n = 1$ the arcs 
$\alpha$ and $\beta$ meet the same string of $L\cap T$, say $\lambda_1$. Hence the two boundary components 
of $S$ are contained in the same component $l_1$ of $L$. If $l_1=l_2$, then there exists a curve 
$c'\in\calC_{0,4}$ which meets the bridge in-between $\alpha$ and $\beta$, and this curve must be as 
in Case 1.0. Thus, we may assume that $l_1\ne l_2$. Consider the disk $\Delta$ as depicted in
Figure \ref{fig: C04 proof}(a). Its interior intersects $L$ in a single point in $l_2$, 
and its boundary is the union of an arc on $S$ and an arc on $l_1$. The manifold $\NN(S)\cup \NN(l_1)$ 
has two torus boundary components $U$ and $V$. See Figure \ref{fig: C04 proof}(b). Let $U$ 
be the torus that meets $\Delta$. Let $U_-$ be the component of $S^3\ssm U$ containing $l_2$, and 
let $U_+$ be the other component. The torus $U$ is incompressible in $U_-$, as such a compression 
must be on $\Delta$ and $\Delta$ meets $l_2$ once. It is also incompressible in $U_+$, as if
such a compression disk exists then since it cannot intersect $l_1$, it gives a compression of 
the annulus $S$, in contradiction to the incompressibility of $S$. 

\begin{figure}[ht]
\centering

\subfigure[]{
   \begin{overpic}[height=4cm]{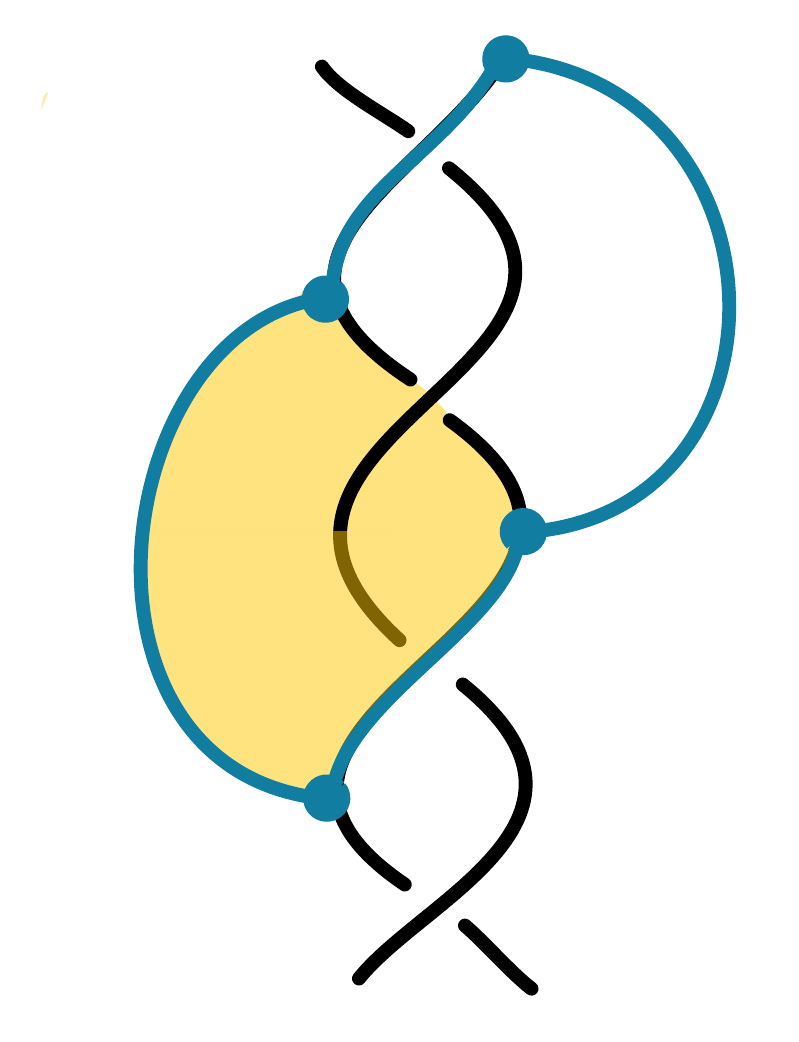}
\put(17,40){$\Delta$}
\put(30,-3){$l_2$}
\put(50,-3){$l_1$}
\put(38,30){\Tiny$\beta$}
\put(38,80){\Tiny$\alpha$}
\end{overpic}
}
\hspace{2cm}
\subfigure[]{
   \begin{overpic}[height=5cm]{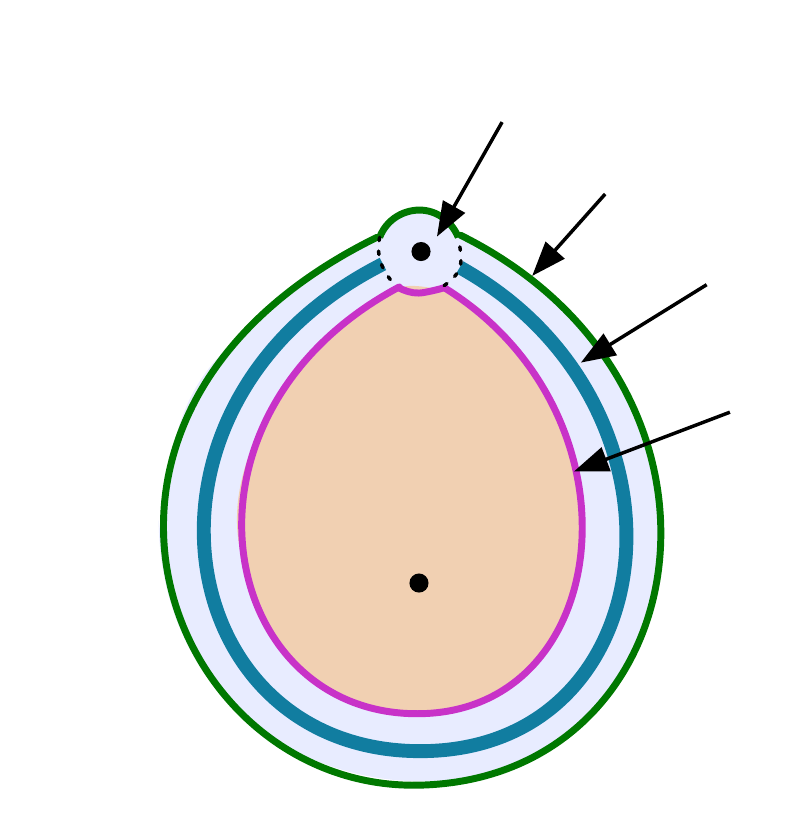}
\put(60,88){$l_1$}
\put(73,75){$V$}
\put(85,65){$S$}
\put(90,50){$U$}
\put(42,30){$l_2$}
\put(45,45){$U^-$}
\end{overpic}
}

\caption{(a) The disk $\Delta$. (b) A cross section of the twist box, the annulus $S$ and the tori $U,V$}  
    \label{fig: C04 proof}
\end{figure}




By Corollary \ref{cor: atoroidal}, $U$ must be boundary parallel to either $\partial \NN (l_2)$ or 
$\partial \NN (l_1)$. If $U$ is parallel to $\partial \NN (l_2)$, then since $l_1$ is parallel to 
a curve in $U$ crossing $\Delta$ once, then there exists an annulus $A\subset S^3 \ssm L$ whose 
boundary is $l_1\cup l_2$.
The annulus $A$ is incompressible, since otherwise $l_1 \cup l_2$ would be a 2-component unlink that 
is not linked with $L$, i.e., $L$ is split, contradicting Corollary \ref{cor: nonsplit}. The annulus $A$ is trivially boundary-incompressible because the boundary components of $A$ are on two different components of $L$. If we run the argument for $A$ instead of $S$, Case 1.1 cannot occur because the boundary components of $A$ are on two different components of $L$.

If $U$ is parallel to $\partial \NN (l_1)$, the intersection $\Delta \cap U$ is a curve on $U$ which 
meets the meridian of $\NN(l_1)$ exactly once: as  if it meets it more than once, then the union 
$\NN(\Delta) \cup \NN(l_1)$ determines a once-punctured non-trivial lens space contained in $S^3$, which 
is impossible. Thus, $\partial \Delta$ which is parallel to $\Delta\cap U$ is also parallel to $l_1$. 
Therefore, the arcs $\partial \Delta \ssm l_1 \subset S$ and $l_1 \ssm \partial \Delta \subset L$
bound a disk. Since the arc $\partial \Delta \ssm l_1$ connects different components of $S$ it is 
an essential arc, and the disk is a boundary compression for $S$, a contradiction.

\vskip7pt
\underline{Sub-case 1.2:} $n\ge 2$.
As the boundary of the annulus $S$ must pass through every other bridge in $T$, there must be 
another curve of $\calC_{0,4}$ in between $\alpha$ and $\beta$. By choosing an innermost such 
curve we are back in one of the previous cases.

\vskip10pt
\noindent\underline{Case 2:} If $\beta$ passes over two crossings, then since the endpoints of 
$\beta$ have the same color they are at distance 2. Therefore, we are in Case 
\eqref{obs: single arc. two bubbles and distance 2} 
of Observation \ref{obs: single arc}. 
By the uniqueness of such arcs, $\alpha = \beta$. However, since $\alpha$ is a bridge in the 
``middle'' of a twist box, it has only one crossing, a contradiction.
\end{proof}


Theorem \ref{thm: boundary parallel} immediately follows from Corollary \ref{cor: atoroidal} 
and Proposition \ref{pro: unannular}.

\begin{proof}[Proof of Theorem \ref{thm: highly twisted plats are hyperbolic}]
If $m = 2$ then $L$ is a $2$-bridge knot/link which is not a torus knot/link, since there are at 
least two twist regions,  hence it is atoroidal and unannular by \cite{Hatcher-Thurston}. If 
$m \geq 3$ then the theorem follows directly from Theorem \ref{thm: boundary parallel}.  In both 
case apply  Thurston's result (see \cite{thurston}) that a link complement which contains no 
essential annuli or tori is hyperbolic.
\end{proof}

\bibliographystyle{amsplain}
\bibliography{biblio}

\end{document}